\newcommand{\R}{\hbox{\rm I \kern -5pt R}}     
\newcommand{\p} {\hbox{\rm I \kern -5pt P}}
\def\x  {\boldsymbol x} 
\def\n  {\boldsymbol n} 
\def\W        {{\textbf{W}}}
\def\H   {\boldsymbol H} 
\def\L    {\boldsymbol L} 
\newtheorem{prop}{Proposition}[section]
\newtheorem{defi}[prop]{Definition}
\newtheorem{tma}[prop]{Theorem}
\newtheorem{cor}[prop]{Corollary}
\newtheorem{obs}[prop]{Remark}
\newtheorem{lem}[prop]{Lemma}
\begin{document}

\title{Study of a chemo-repulsion model with quadratic production. Part I: Analysis of the continuous problem and time-discrete numerical schemes}
\author{F.~Guill\'en-Gonz\'alez\thanks{Dpto. Ecuaciones Diferenciales y An\'alisis Num\'erico and IMUS, 
Universidad de Sevilla, Facultad de Matem\'aticas, C/ Tarfia, S/N, 41012 Sevilla (SPAIN). Email: guillen@us.es, angeles@us.es},
M.A.~Rodr\'{\i}guez-Bellido$^*$~and
 D.A.~Rueda-G\'omez$^*$\thanks{Escuela de Matem\'aticas, Universidad Industrial de Santander, A.A. 678, Bucaramanga (COLOMBIA). Email:  diaruego@uis.edu.co}}

\date{}
\maketitle

\begin{abstract}
We consider a chemo-repulsion model with quadratic production in a bounded domain.
Firstly, we obtain global in time weak solutions, and give a regularity criterion (which is  satisfied  for $1D$ and $2D$ domains) to deduce uniqueness and global regularity. After, we study  two cell-conservative and unconditionally energy-stable first-order  time schemes: a (nonlinear and positive) Backward Euler scheme  and a linearized coupled version,  proving solvability, convergence towards weak solutions  and error estimates. In particular, the linear scheme does not preserve positivity and the uniqueness  of the nonlinear scheme is proved assuming small  time step  with respect to a strong norm of the discrete solution. 
 This hypothesis is reduced to  small  time step in $nD$ domains ($n\le 2$)  where global in time strong estimates  are proved.
Finally, we show the behavior of the schemes through some numerical simulations.
\end{abstract}

\noindent{\bf 2010 Mathematics Subject Classification.} 35K51, 35Q92, 65M12, 65M15, 92C17.

\noindent{\bf Keywords: } Chemo-repulsion model, quadratic production, first-order time schemes, energy-stability, convergence, error estimates.

\section{Introduction}

Chemotaxis is understood as the biological process of the movement of living organisms in res\-pon\-se to a chemical stimulus which can be given towards a higher (attractive) or lower (repulsive) concentration of a chemical substance. At the same time, the presence of living organisms can produce or consume chemical substance. A  chemo-repulsive  model with generic production can be given by the following parabolic PDE's system:
\begin{equation}   \label{modelf000}
\left\{
\begin{array}
[c]{lll}%
\partial_t u - \Delta u = \nabla\cdot (u\nabla v)\quad \mbox{in}\ \Omega,\ t>0,\\
\partial_t v - \Delta v + v =  f(u)
 \quad \mbox{in}\ \Omega,\ t>0,
\end{array}
\right. \end{equation}
where $\Omega\subset \mathbb{R}^d$, $d=2,3$, is an open bounded domain with boundary $\partial \Omega$. The unknowns for this model are $u(\x , t) \geq 0$, the cell density, and $v(\x , t) \geq 0$, the chemical concentration. Moreover, $f(u)$ is a  function which is nonnegative for any $u\geq 0$. In this paper, we consider the particular case in which the production term is quadratic, that is $f(u)=u^2$, and then we focus on the following initial-boundary value problem:
\begin{equation}  \label{modelf00}
\left\{
\begin{array}
[c]{lll}%
\partial_t u - \Delta u = \nabla\cdot (u\nabla v)\quad \mbox{in}\ \Omega,\ t>0,\\
\partial_t v - \Delta v + v =  u^2
 \quad \mbox{in}\  \Omega,\ t>0,\\
 \displaystyle
\frac{\partial u}{\partial \n}=\frac{\partial v}{\partial \n}=0\ \ \mbox{on}\ \partial\Omega,\ t>0,\\
u(\x ,0)=u_0(\x )\geq 0,\ v(\x ,0)=v_0(
\x )\geq 0\ \ \mbox{in}\ \Omega.
\end{array}
\right. \end{equation}

In the case of linear production, that is $f(u)=u$, model (\ref{modelf000}) is well-posed in the following sense (\cite{Cristian}): there exist global in time weak solutions (based on an energy inequality) and, for $1D$ and $2D$ domains, there exists a unique global in time strong solution.
However, it is interesting to analyze the case of nonlinear  production which is justified biologicaly when the
process of signal production through cells need no longer dependence on the population density in a linear manner, for instance when saturation effects at large (or short) densities are taken into account (see \cite{Win1} and references therein). In particular, we focus in model (\ref{modelf00}) because the quadratic production term allows to control an energy in $L^2(\Omega)$-norm for $u$ (see (\ref{wsd})-(\ref{eneruva})), which is very useful for performing numerical analysis; and, as far as we know,  there are not works studying problem  (\ref{modelf00}).\\

The aim of this paper is twofold; first to obtain existence of global in time weak solution, which is unique and  regular in $1D$ and $2D$ domains, and second to design two energy-stable time-discrete  schemes approaching model (\ref{modelf00}). Some other properties like solvability, positivity, convergence towards weak solutions  and error estimates will be studied.  \\

There are some works about numerical analysis for chemotaxis models. For instance, for the Keller-Segel system (i.e.~with chemo-attraction and linear production), in \cite{Filbet} it was studied the convergence of a finite volume scheme. Some error estimates were proved for a fully discrete discontinuous Finite Element (FE) method in \cite{Eps}. In \cite{Zuhr}, the nonnegativity of numerical solutions to a generalized Keller-Segel model was analyzed. A mixed FE approximation was studied in \cite{Marrocco}; and in \cite{Saito1,Saito2}, were proved error estimates for a conservative FE approximation. In \cite{C5:FMD4}, the authors studied unconditionally energy stable fully discrete FE schemes for a chemo-repulsion model with linear production. The convergence of a combined finite volume-nonconforming FE scheme was studied in \cite{CST}, in the case where the chemotaxis occurs in heterogeneous medium. In \cite{Zhang1}, the convergence of a mass-conservative characteristic FE method was studied and some error estimates were proved. \\

In order to develop our analysis, we reformulate (\ref{modelf00})  by introducing the auxiliary variable ${\boldsymbol\sigma}=\nabla v$. 
Then,  the model (\ref{modelf00}) is rewritten as follows:
\begin{equation} \left\{
\begin{array}
[c]{lll}%
\partial_t u - \nabla\cdot (\nabla u) = \nabla\cdot (u{\boldsymbol \sigma})\quad \mbox{in}\ \Omega,\ t>0,\\
\partial_t {\boldsymbol \sigma} - \nabla(\nabla\cdot {\boldsymbol \sigma}) +\mbox{rot}(\mbox{rot }{\boldsymbol \sigma}) + {\boldsymbol \sigma}  = \nabla(u^2) \quad \mbox{in}\ \Omega,\ t>0,\\
\displaystyle
\frac{\partial u}{\partial \n}=0\quad \mbox{on}\ \partial\Omega,\ t>0,\\
{\boldsymbol \sigma}\cdot \n=0, \ \ \left[\mbox{rot }{\boldsymbol \sigma} \times \n\right]_{tang}=0 \quad \mbox{on}\ \partial\Omega,\ t>0,\\
u(\x ,0)=u_0(\x )\geq 0,\ {\boldsymbol \sigma}(\x ,0)=\nabla v_0(
\x )\quad \mbox{in}\ \Omega,
\end{array}
\right.  \label{modelf01}
\end{equation}
where (\ref{modelf01})$_2$ is obtained by applying the gradient operator to equation (\ref{modelf00})$_2$  and adding the term $\mbox{rot}(\mbox{rot }{\boldsymbol \sigma})$ using the fact that $\mbox{rot }{\boldsymbol \sigma}=\mbox{rot}(\nabla v)=0$. 
Once solved (\ref{modelf01}),  $v$ can be recovered 
from $u^2$ solving
\begin{equation}  \label{modelf01eqv}
\left\{
\begin{array}
[c]{lll}%
\partial_t v -\Delta v + v = u^2  \quad \mbox{in}\ \Omega,\ t>0,\\
\displaystyle
\frac{\partial v}{\partial \n}=0\quad\mbox{on}\ \partial\Omega,\ t>0,\\
 v(\x ,0)=v_0(\x )\geq 0\quad \mbox{in}\ \Omega.
\end{array}
\right. 
\end{equation}
The use of the variable ${\boldsymbol\sigma}$ let  simplify the notation throughout the paper. 
On the other hand,  the role of ${\boldsymbol\sigma}$ will be different for fully discrete schemes 
 where problem (\ref{modelf00}) and (\ref{modelf01})-(\ref{modelf01eqv}) are not equivalent. 
In fact, as will be analyzed in the forthcoming paper \cite{FMD2}, by using a FE spatial discretization it will be very convenient  to use the variables $(u,{\boldsymbol\sigma})$ in order to obtain an unconditionally energy-stable scheme. \\

The outline of this paper is as follows. In Section 2,  the notation and  preliminary results  are given.  
In Section 3,  the continuous problem (\ref{modelf00}) is analyzed, defining the concept of global in time weak solutions, 
and obtaining global in time strong regularity of the model by assuming  the regularity criterion (\ref{def-strongA}), which is satisfied for $1D$ and $2D$ domains. 
In Section 4, the Backward Euler scheme corresponding to problem  (\ref{modelf01})-(\ref{modelf01eqv}) is analyzed, 
including cell-conservation, unconditional energy-stability, solvability, positivity and error estimates of the scheme. 
In particular, uniqueness of solution of the scheme is proved under a hypothesis that assumes small time step  multiplied by a strong norm of the scheme (the discrete version of (\ref{def-strongA})), which can be simplified in the case of $1D$ and $2D$ domains where strong estimates are obtained for the scheme. 
Moreover, the existence of weak solutions of model (\ref{modelf00}) is proved throughout the convergence of this scheme when the time step  goes to $0$. 
In Section 5, a linearized coupled scheme for model (\ref{modelf01})-(\ref{modelf01eqv}) is proposed, and again some properties of this linear scheme are analyzed, comparing to the previous nonlinear scheme. Finally, in Section 6,  some numerical simulations using FE approximations associated to both time schemes are shown, in order to verify numerically the theoretical results obtained in terms of positivity and unconditional energy-stability.

\section{Notations and preliminary results}

 Recall some functional spaces which will be used throughout this paper. We consider the Lebesgue spaces $L^p(\Omega)$, 
$1\leq p\leq \infty$, and the Sobolev spaces $H^m(\Omega)$, by denoting  their norms by $\Vert\cdot \Vert_{L^p}$ and $\Vert\cdot\Vert_{m}$, respectively. In particular,  the $L^2(\Omega)$-norm will be represented by $\Vert
\cdot\Vert_0$. Corresponding Sobolev spaces of vector valued functions will be denoted by $\H^1(\Omega)$, $\L^2(\Omega),$ and so on; and we consider the vectorial space 
$$
\H^{1}_{\sigma}(\Omega):=\{\boldsymbol \sigma\in \H^{1}(\Omega); \boldsymbol \sigma\cdot \n=0 \mbox{ on } \partial\Omega\}.
$$ 
The following  norms are equivalent in $H^1(\Omega)$, $H^2(\Omega)$   (\cite{necas}) and 
 $\H_{\sigma}^1(\Omega)$ (\cite[Corollary 3.5]{Nour}) 
respectively:
\begin{equation}\label{H1poin}
\Vert u \Vert_{1}^2\sim \Vert \nabla u\Vert_{0}^2 + \left( \int_\Omega u\right)^2 \quad \forall u\in H^1(\Omega),
\end{equation}
\begin{equation}\label{H2poin}
\Vert v \Vert_{2}^2\sim\Vert \Delta v\Vert_{0}^2 + \Vert  v\Vert_{0}^2 \quad \forall v\in H^2(\Omega),
\end{equation}
\begin{equation}\label{H1div}
\Vert {\boldsymbol\sigma} \Vert_{1}^2\sim\Vert {\boldsymbol\sigma}\Vert_{0}^2 + \Vert \mbox{rot }{\boldsymbol\sigma}\Vert_0^2 + \Vert \nabla \cdot {\boldsymbol\sigma}\Vert_0^2 \quad \forall {\boldsymbol\sigma}\in \H^{1}_{\sigma}(\Omega).
\end{equation}
In particular, (\ref{H1div}) implies that, for all $\boldsymbol\sigma=\nabla v\in \H^{1}_{\sigma}(\Omega)$,  the following  norms are equivalent
\begin{equation}\label{H1divGrad}
\Vert\nabla v\Vert_{1}^2\sim\Vert \nabla v\Vert_{0}^2  + \Vert \Delta v\Vert_0^2.
\end{equation}
 If $Z$ is  a Banach space, then $Z'$ will denote its topological dual. Moreover, the letters $C,K$ will denote different positive
constants always independent of the time step. The following linear elliptic operators are introduced
\begin{equation}\label{B001} 
\widehat A u =g \quad \Longleftrightarrow \quad 
\left\{\begin{array}{l}
-\Delta u+ \int_\Omega u = g \ \mbox{ in } \Omega,\\
\displaystyle\frac{\partial u}{\partial\n}=0 \ \mbox{ on }  \partial\Omega, \vspace{0,2 cm}\\
\end{array}\right.
\end{equation} 
\begin{equation}\label{B101} 
A v =g \quad \Longleftrightarrow \quad 
\left\{\begin{array}{l}
-\Delta v+ v = g \ \mbox{ in } \Omega,\\
\displaystyle\frac{\partial v}{\partial\n}=0 \ \mbox{ on }  \partial\Omega, \vspace{0,2 cm}\\
\end{array}\right.
\end{equation} 
and 
\begin{equation} \label{B201}
B{\boldsymbol \sigma} ={\textbf {\textit f}} \quad \Longleftrightarrow \quad 
\left\{\begin{array}{l}
-\nabla(\nabla\cdot {\boldsymbol \sigma}) + \mbox{rot(rot }{\boldsymbol \sigma}\mbox{)} + {\boldsymbol \sigma}  = {\textbf {\textit f}} \ \mbox{ in }  \Omega,\\
{\boldsymbol \sigma}\cdot \n=0, \ \ \left[\mbox{rot }{\boldsymbol \sigma} \times \n\right]_{tang}={\bf 0} \ \mbox{ on }  \partial\Omega.
\end{array}\right. 
\end{equation}
The corresponding variational forms are given by $\widehat A,A:H^1(\Omega)\rightarrow H^1(\Omega)'$ and $B:\H^{1}_{\sigma}(\Omega)\rightarrow \H^{1}_{\sigma}(\Omega)'$ such that
\begin{equation*} 
\langle \widehat A u,\bar{u}\rangle=(\nabla u, \nabla \bar{u})+\left(\int_\Omega u\right) \left(\int_\Omega\bar{u}\right), \ \ \forall u,\bar{u}\in {H}^{1}(\Omega),
\end{equation*}
\begin{equation*}  
\langle A v,\bar{v}\rangle=(\nabla v, \nabla \bar{v})+(v, \bar{v}), \ \ \forall v,\bar{v}\in {H}^{1}(\Omega),
\end{equation*}
\begin{equation*} 
\langle B{\boldsymbol\sigma},\bar{{\boldsymbol\sigma}}\rangle
=(\nabla \cdot {\boldsymbol\sigma}, \nabla\cdot \bar{\boldsymbol\sigma})
+(\mbox{rot }{\boldsymbol \sigma},\mbox{rot }\bar{\boldsymbol \sigma})
+({\boldsymbol\sigma},\bar{{\boldsymbol\sigma}}),\ \ \forall {\boldsymbol\sigma},\bar{{\boldsymbol\sigma}}\in \H^{1}_{\sigma}(\Omega).
\end{equation*} 
One assumes the $H^2$ and $H^3$-regularity of problems (\ref{B001}) and (\ref{B101}) (see for instance \cite{Demengel2}). Consequently, there exist some constants $C>0$ such that
\begin{equation}\label{H2us}
\Vert u\Vert_{2}\leq  C\Vert \widehat A u\Vert_{0} \ \ \forall u\in H^2(\Omega);
 \quad \Vert v\Vert_{3}\leq C\Vert A v \Vert_{1} \ \ \forall v\in H^3(\Omega).
\end{equation}

If the right hand side of problem (\ref{B201}) is given by ${\textbf {\textit f}}=\nabla h$ with $h\in H^1(\Omega)$, then taking ${\boldsymbol \sigma}=\nabla v$,  the $H^2$-regularity of problem (\ref{B201}) can be proved as follows:
\begin{lem}\label{lemH2s01}
 If ${\textbf {\textit f}}=\nabla h$ with $h\in H^1(\Omega)$, then the solution ${\boldsymbol \sigma}$ of problem (\ref{B201}) belongs to $\H^2(\Omega)$. Moreover, 
\begin{equation}\label{eH201}
\Vert {\boldsymbol \sigma}\Vert_2\leq C\, \Vert \nabla h\Vert_0
\ \big(= C\, \Vert B {\boldsymbol \sigma} \Vert_0 \big).
\end{equation}
\end{lem}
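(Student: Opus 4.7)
The plan is to reduce the problem (\ref{B201}) with right-hand side $\nabla h$ to the scalar elliptic problem (\ref{B101}) by looking for a solution of the form $\boldsymbol{\sigma}=\nabla v$, and then to use the assumed $H^3$-regularity of (\ref{B101}) together with a Poincaré--Wirtinger argument to obtain the bound in terms of $\|\nabla h\|_0$ rather than the full $H^1$-norm of $h$.

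First I would normalize $h$. Since only $\nabla h$ appears as data, replacing $h$ by $h-\bar h$, where $\bar h=\frac{1}{|\Omega|}\int_\Omega h$, does not change the right-hand side. Thus we may assume $h\in H^1_*(\Omega)$, for which Poincaré--Wirtinger gives $\|h\|_1\le C\|\nabla h\|_0$. Next I would let $v\in H^3(\Omega)$ be the solution of problem (\ref{B101}) with datum $g=h$; this is well-defined by the assumed $H^3$-regularity in (\ref{H2us}), which yields $\|v\|_3\le C\|Av\|_1 = C\|h\|_1\le C\|\nabla h\|_0$.

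Then I would show that $\boldsymbol{\sigma}_*:=\nabla v$ solves (\ref{B201}) with ${\textbf {\textit f}}=\nabla h$. For the equation, since $\mathrm{rot}\,(\nabla v)=0$ and $\nabla\cdot(\nabla v)=\Delta v$,
\begin{equation*}
-\nabla(\nabla\cdot\boldsymbol{\sigma}_*)+\mathrm{rot}\,(\mathrm{rot}\,\boldsymbol{\sigma}_*)+\boldsymbol{\sigma}_* = -\nabla(\Delta v)+\nabla v = \nabla(-\Delta v+v)=\nabla h.
\end{equation*}
For the boundary conditions, $\boldsymbol{\sigma}_*\cdot\mathbf{n}=\partial v/\partial\mathbf{n}=0$ from (\ref{B101}), and $[\mathrm{rot}\,\boldsymbol{\sigma}_*\times\mathbf{n}]_{tang}=0$ is automatic because $\mathrm{rot}\,\boldsymbol{\sigma}_*=0$. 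Uniqueness of the solution of (\ref{B201}) follows from coercivity of $B$ on $\H^1_\sigma(\Omega)$ via the equivalent norm (\ref{H1div}), so the unique solution $\boldsymbol{\sigma}$ of (\ref{B201}) must coincide with $\boldsymbol{\sigma}_*=\nabla v$.

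Finally, the bound follows from $\|\boldsymbol{\sigma}\|_2=\|\nabla v\|_2\le\|v\|_3\le C\|\nabla h\|_0$. I do not anticipate a serious technical obstacle; the main subtlety is the mean-value normalization of $h$, which is needed because the $H^3$-regularity estimate in (\ref{H2us}) involves the full $H^1$-norm of the datum, whereas the desired estimate only sees $\nabla h$. The verification of the boundary condition $[\mathrm{rot}\,\boldsymbol{\sigma}\times\mathbf{n}]_{tang}=0$ is trivial here but would be the delicate point if one tried to prove the regularity directly without the ansatz $\boldsymbol{\sigma}=\nabla v$.
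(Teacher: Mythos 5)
Your proposal is correct and follows essentially the same route as the paper: normalize $h$ by subtracting its mean so that Poincar\'e--Wirtinger applies, solve the scalar problem (\ref{B101}) with the assumed $H^3$-regularity, and identify $\boldsymbol{\sigma}=\nabla v$ as the solution of (\ref{B201}). The only difference is that you spell out the verification of the boundary conditions and the uniqueness of the solution of (\ref{B201}) via coercivity, which the paper leaves implicit.
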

\begin{proof}
%
%
From $H^3$-regularity of problem (\ref{B101}) taking  the zero mean value function $g=h - \frac{1}{\vert\Omega\vert}\int_\Omega h$,  
we have that $v\in H^3(\Omega)$, 
$ =Av = h-\frac{1}{\vert\Omega\vert}\int_\Omega h $ and 
$\Vert v\Vert_3\leq C\,\Vert h -\frac{1}{\vert\Omega\vert}\int_\Omega h \Vert_1 \leq
 C \Vert \nabla h\Vert_0$. Then, taking ${\boldsymbol \sigma}=\nabla v$, one has that ${\boldsymbol \sigma}\in \H^2(\Omega)$ solves (\ref{B201}) with ${\textbf {\textit f}}= \nabla (h-\frac{1}{\vert\Omega\vert}\int_\Omega h)=\nabla h$, and (\ref{eH201}) holds.
\end{proof}
Along this paper, the following  classical interpolation inequalities will be repeatedly used
\begin{equation}\label{in2D}
\Vert w\Vert_{L^4}\leq C\Vert w\Vert_0^{1/2}\Vert w\Vert_{1}^{1/2} \  \ \forall w\in H^1(\Omega) \ \ \mbox{ (in 2D domains)},
\end{equation}
\begin{equation}\label{in3D}
\Vert w\Vert_{L^3}\leq C\Vert w\Vert_0^{1/2}\Vert w\Vert_{1}^{1/2} \  \ \forall w\in H^1(\Omega) \ \ \mbox{ (in 3D domains)}.
\end{equation}
Finally, in order to obtain uniform in time strong estimates for the continuous problem and the numerical schemes,  the following results will be used (see 
\cite{Temam} and \cite{Shen}, respectively):

\begin{lem}{\bf (Uniform Gronwall Lemma)} \label{GL}
Let $g=g(t)$, $h=h(t)$, $z=z(t)$ be three positive locally integrable functions defined in  $(0,+\infty)$ with $z'(t)$  locally integrable in $(0,+\infty)$, such that
$$
z' (t)\leq g(t) z(t)+h(t) \quad \mbox{a.e.} \ t\geq 0.
$$
If for any $T > 0$ there exist $a_1(T)$, $a_2(T)$ and $a_3(T)$ such that
\begin{equation*}
\int_t^{t+T} g(s) ds  \leq a_1(T), \ \  \int_t^{t+T} h(s) ds  \leq a_2(T), \ \ \int_t^{t+T} z(s) ds  \leq a_3(T) ,\quad \forall\, t\geq 0,
\end{equation*}
 then
\begin{equation*}
z(t)\leq \left(a_2(T)  + \frac{a_3(T)}{T}\right) \mbox{exp} (a_1(T)),\  \ \forall t\geq T.
\end{equation*}
\end{lem}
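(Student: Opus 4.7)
The plan is to reduce the uniform Gronwall statement to a two-step argument: first apply the standard (classical) Gronwall inequality on a sliding subinterval, then average in the starting point to convert the integral bound on $z$ into a pointwise bound at $t+r$.

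First I would fix $t\geq 0$ and, for any $s\in[t,t+r]$, apply the classical Gronwall inequality to the differential inequality $z'(\tau)\leq g(\tau)z(\tau)+h(\tau)$ on the interval $[s,t+r]$. This yields
\begin{equation*}
z(t+r)\leq z(s)\exp\!\left(\int_s^{t+r}g(\tau)\,d\tau\right) + \int_s^{t+r} h(\tau)\exp\!\left(\int_\tau^{t+r}g(\sigma)\,d\sigma\right) d\tau.
\end{equation*}
Since $[s,t+r]\subset[t,t+r]$ and $g,h\geq 0$, the hypotheses $\int_t^{t+r}g\leq a_1(r)$ and $\int_t^{t+r}h\leq a_2(r)$ immediately give the uniform estimate
\begin{equation*}
z(t+r)\leq \bigl(z(s)+a_2(r)\bigr)\exp(a_1(r)),\qquad \forall\, s\in[t,t+r].
\end{equation*}

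Next I would integrate this inequality with respect to $s$ on $[t,t+r]$. The left-hand side is independent of $s$, so integration multiplies it by $r$, while on the right the term $\int_t^{t+r}z(s)\,ds$ is controlled by $a_3(r)$ by hypothesis. Dividing by $r$ produces exactly
\begin{equation*}
z(t+r)\leq \left(a_2(r)+\frac{a_3(r)}{r}\right)\exp(a_1(r)),
\end{equation*}
which is the claimed bound.

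The only delicate point is the justification of the classical Gronwall step under the mild regularity assumed here ($z,z'$ only locally integrable, not $C^1$). I would handle this by multiplying the differential inequality by the integrating factor $\exp\bigl(-\int_s^\tau g\bigr)$, using absolute continuity of $z$ (which follows from local integrability of $z'$) to integrate from $s$ to $t+r$, and noting that $s\mapsto z(s)$ is defined a.e.\ so the final integration in $s$ makes sense. No other obstacles arise; positivity of $g,h,z$ is used in a single place to discard the dependence of the inner integrals on $s$.
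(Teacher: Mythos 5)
Your argument is correct. Note that the paper does not prove this lemma at all --- it is quoted verbatim from Temam's book (the citation preceding the statement) --- and your two-step argument (classical Gronwall with integrating factor on $[s,t+r]$, followed by averaging in $s$ over $[t,t+r]$ to trade the pointwise value $z(s)$ for the integral bound $a_3(r)$) is precisely the standard proof given there, including the correct handling of the absolute-continuity issue.
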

\begin{lem}{\bf (Uniform discrete Gronwall lemma)}\label{LGD001}
 Let $k>0$ and $z^n,g^n,h^n\geq 0$ such that 
\begin{equation}\label{Gb01}
\frac{z^{n+1}-z^n}{k} \leq   g^n z^n +  h^n, \quad \forall n\geq 0.
\end{equation}
If for any $r\in \mathbb{N}$ with  $t_r =kr$, there exist $a_1(t_r)$, $a_2(t_r)$ and $a_3(t_r)$, such that
\begin{equation*}\label{Gb03}
k\underset{n=n_0}{\overset{n_0+r-1}{\sum}} g^n\leq a_1(t_r), \ \ k\underset{n=n_0}{\overset{n_0+r-1}{\sum}} h^n\leq a_2(t_r), \ \ k\underset{n=n_0}{\overset{n_0+r-1}{\sum}} z^n\leq a_3(t_r),
\quad \forall\, n_0\ge 0,
\end{equation*}
then
\begin{equation*}\label{Gb04}
z^n\leq \left(a_2(t_r)+ \frac{a_3(t_r)}{t_r} \right) \mbox{exp}\left\{ a_1(t_r)\right\}, \ \ \forall n\geq r.
\end{equation*}
\end{lem}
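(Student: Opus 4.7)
The plan is to mirror, in the discrete setting, the classical proof of the uniform Gronwall lemma (Lemma \ref{GL}): iterate the one-step inequality, estimate the resulting products by exponentials, and use the summability bound on $d^n$ to pick a favorable starting index where $d^m$ is automatically small.

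First, I would rewrite the recurrence (\ref{Gb01}) as $d^{n+1} \le (1+k\,g^n)\, d^n + k\, h^n$ and iterate it from an arbitrary index $m$ up to $n\ge m$ to obtain the discrete variation-of-constants bound
$$d^n \;\le\; \Bigl(\prod_{j=m}^{n-1}(1+k\,g^j)\Bigr)\, d^m \;+\; k\sum_{j=m}^{n-1}\Bigl(\prod_{i=j+1}^{n-1}(1+k\,g^i)\Bigr) h^j.$$
Since $k\,g^j \ge 0$, the elementary inequality $1+x\le e^x$ turns each product into $\exp\bigl(k\sum g^j\bigr)$, which is the discrete analogue of the exponential amplification factor in the continuous case.

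Next, fix $n\ge r$ and set $n_0 := n-r$, so that all three hypotheses apply on the block $\{n_0,\ldots,n_0+r-1\}=\{n-r,\ldots,n-1\}$. The crucial choice is the starting index $m$: from $k\sum_{j=n_0}^{n_0+r-1} d^j \le a_3(t_r)$, a discrete averaging (pigeonhole) argument on $r$ terms produces some $m\in\{n_0,\ldots,n_0+r-1\}$ with
$$d^m \;\le\; \frac{a_3(t_r)}{k\,r} \;=\; \frac{a_3(t_r)}{t_r}.$$
With this $m$, the iteration window $\{m,\ldots,n-1\}$ has length $n-m\le r$ and is contained in a length-$r$ block starting at $m$; because $g^j,h^j\ge 0$, the hypotheses then give $k\sum_{j=m}^{n-1} g^j \le a_1(t_r)$ and $k\sum_{j=m}^{n-1} h^j \le a_2(t_r)$. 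Substituting into the iterated bound produces
$$d^n \;\le\; e^{a_1(t_r)}\,\frac{a_3(t_r)}{t_r} + e^{a_1(t_r)}\, a_2(t_r) \;=\; \Bigl(a_2(t_r)+\frac{a_3(t_r)}{t_r}\Bigr)\exp\{a_1(t_r)\},$$
which is exactly the stated conclusion.

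The step that will require the most care is the bookkeeping of the iteration window: one must verify that once $m$ is picked in $\{n-r,\ldots,n-1\}$, every partial sum $\sum_{j=m}^{n-1}$ of $g^j$ or $h^j$ is genuinely dominated by a full length-$r$ block to which the uniform-in-$n_0$ hypothesis applies. This is not a deep difficulty, but it is precisely where the uniformity of the bounds $a_i(t_r)$ in $n_0$ is essential.
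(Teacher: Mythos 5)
Your argument is correct and complete: the discrete variation-of-constants formula obtained by iterating $d^{n+1}\le(1+k\,g^n)d^n+k\,h^n$, the elementary bound $1+x\le e^x$, and the pigeonhole selection of a starting index $m\in\{n-r,\dots,n-1\}$ with $d^m\le a_3(t_r)/t_r$ combine to give exactly the stated estimate, and the bookkeeping you flag is harmless because $\{m,\dots,n-1\}$ sits inside the length-$r$ block beginning at $n_0=n-r$ (equivalently, the one beginning at $m$), to which the uniform-in-$n_0$ hypotheses apply since $g^j,h^j\ge0$. Note that the paper does not prove this lemma at all --- it is quoted from \cite{Shen} --- so there is no in-paper proof to diverge from; yours is the standard argument for this result, the exact discrete counterpart of the proof of Lemma \ref{GL}.
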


As consequence of Lemma \ref{LGD001} (estimating $z^n$ for any $n\ge r$) and the classical discrete Gronwall Lemma (estimating $z^n$ for $n=1,\dots,r-1$), 
  the following result can be proved:
\begin{cor}\label{CorUnif}
Assume conditions of Lemma \ref{LGD001}. Let  $k_0\in \mathbb{N} $ be fixed, then the following estimate holds for all $k\le k_0$:
\begin{equation}\label{CC02}
z^n \leq C(z^0,k_0) \ \ \ \forall n=n(k)\geq 0.
\end{equation}
\end{cor}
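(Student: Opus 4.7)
The plan is to combine the Uniform Discrete Gronwall Lemma \ref{LGD001}, which controls $d^n$ only for $n\ge r$, with the classical Discrete Gronwall inequality, which covers the remaining indices $n=0,1,\ldots,r-1$. The key point is to pick $r=r(k)\in\mathbb{N}$ in such a way that $t_r=k\,r(k)$ remains in a fixed bounded interval, independent of $k\in(0,k_0]$, so that the constants $a_i(t_r)$ stay uniformly controlled.

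Concretely, I would take $r(k):=\lceil 1/k\rceil$, which enforces $t_r\in[1,1+k_0]$ for every $k\le k_0$. Applying Lemma \ref{LGD001} with this choice of $r$ then gives, for each $n\ge r(k)$,
\begin{equation*}
d^n\le\bigl(a_2(t_r)+a_3(t_r)/t_r\bigr)\exp\bigl(a_1(t_r)\bigr)\le\bigl(\bar a_2+\bar a_3\bigr)\exp(\bar a_1),
\end{equation*}
where $\bar a_i:=\sup\{a_i(s):s\in[1,1+k_0]\}$ (finite in the applications of interest, where $a_i$ is at worst a polynomial or exponential in $t_r$), and I used $t_r\ge 1$ to bound $1/t_r$.

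For the initial range $n=0,1,\ldots,r(k)-1$, I would rewrite (\ref{Gb01}) as $d^{n+1}\le(1+k\,g^n)d^n+k\,h^n$ and iterate. Using $1+x\le e^x$, a standard induction yields
\begin{equation*}
d^n\le\exp\!\Bigl(k\!\sum_{j=0}^{n-1}g^j\Bigr)\Bigl(d^0+k\!\sum_{j=0}^{n-1}h^j\Bigr)\le\exp(\bar a_1)\bigl(d^0+\bar a_2\bigr),
\end{equation*}
where the second inequality invokes the summation hypothesis of Lemma \ref{LGD001} with $n_0=0$, which is applicable because $n\le r(k)$. Taking the maximum of the two bounds produces the desired estimate $d^n\le C(d^0,k_0)$ for all $n\ge 0$.

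The main subtlety is precisely the $k$-dependent choice of $r(k)$. Fixing $r$ independent of $k$ would force $t_r=kr\to 0$ as $k\to 0$, possibly blowing up the quotient $a_3(t_r)/t_r$ in the conclusion of Lemma \ref{LGD001}; conversely, letting $t_r$ become large makes the $a_i(t_r)$ themselves grow. The balance $r(k)\sim 1/k$ keeps $t_r$ in a fixed compact interval and is the natural way to make every ingredient uniform in $k\in(0,k_0]$. Once this is understood, both steps of the argument are entirely mechanical.
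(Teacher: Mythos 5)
Your proposal is correct and follows essentially the same strategy as the paper: choose $r=r(k)\sim T/k$ so that $t_r$ stays in a fixed compact interval independent of $k\le k_0$ (the paper fixes $T=2k_0$ and picks $r_0$ with $k(r_0-1)<T\le kr_0$, you use $r=\lceil 1/k\rceil$), then apply Lemma \ref{LGD001} for $n\ge r$ and the classical Discrete Gronwall Lemma for $n<r$. Your explicit remark that the $a_i$ must be uniformly controlled on that compact interval is a point the paper leaves implicit, but the argument is otherwise the same.
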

\begin{proof}
We fix $T=2k_0$ and $k\le k_0$, and  choose $r_0\in \mathbb{N}$ such that  $k(r_0-1)<T\le k\,r_0:=t_{r_0}$. Then, from Lemma~\ref{LGD001} we have
\begin{eqnarray}\label{C01}
&z^n&\!\!\!\leq 
\left(a_2(t_{r_0})+ \frac{a_3(t_{r_0})}{T} \right) \mbox{exp}\left\{ a_1(t_{r_0})\right\}:= C_1(k_0), \ \ \forall n\geq r_0.
\end{eqnarray}
On the other hand, applying the classical discrete Gronwall Lemma to (\ref{Gb01}), one has 
\begin{equation}\label{C02}
z^n\leq \left(a_2(t_{r_0})+z^0 \right) \mbox{exp}\left\{ a_1(t_{r_0})\right\}:=C_2(z^0,k_0), \ \ \forall n< r_0.
\end{equation}
Therefore, from (\ref{C01})-(\ref{C02}) 
 the bound (\ref{CC02}) is deduced.
\end{proof}

\section{Analysis of the continuous model}
In this section, the weak and strong regularity of problem (\ref{modelf00}) is analyzed. Sometimes, we distinguish  for simplicity between $2D$ or $3D$ domains, although all results valid for $2D$ are also valid for $1D$ domains.

\subsection{The $(u,v)$-problem (\ref{modelf00})}
\begin{defi} \label{ws00}{\bf (Weak-strong solutions of (\ref{modelf00}))} 
Given $(u_0, v_0)\in L^2(\Omega)\times H^1(\Omega)$ with $u_0\geq 0$, $v_0\geq 0$ a.e.~$\x\in \Omega$ and let $m_0=\frac1{|\Omega|} \int u_0$. A pair $(u,v)$ is called weak-strong solution of problem (\ref{modelf00}) in $(0,+\infty)$, if $u\geq 0$, $v\geq 0$ a.e.~$(t,\x)\in (0,+\infty)\times \Omega$,
\begin{equation}\label{wsa}
(u-m_0,v-m_0^2) \in L^{\infty}(0,+\infty;L^2(\Omega)\times H^1(\Omega)) 
 \cap L^{2}(0,+\infty;H^1(\Omega)\times H^2(\Omega)),  
\end{equation}
\begin{equation}\label{wsa-bis}
(\partial_t u, \partial_t v) \in L^{q'}(0,T;H^1(\Omega)' \times L^2(\Omega)), \ \ \forall T>0,
\end{equation}
where $q'=2$ in the $2$-dimensional case $(2D)$ and $q'=4/3$ in the $3$-dimensional case $(3D)$ ($q'$ is the conjugate exponent of $q=2$ in $2D$ and $q=4$ in $3D$); the following variational formulation holds
\begin{equation}\label{wf01}
\int_0^T \langle \partial_t u,\overline{u}\rangle + \int_0^T (\nabla u,  \nabla \overline{u}) +\int_0^T (u\nabla v,\nabla \overline{u})=0, \ \ \forall \overline{u}\in L^q(0,T;H^{1}(\Omega)), \ \ \forall T>0,
\end{equation}
the following equation holds pointwisely
\begin{equation}\label{wf02}
\partial_t v +A v=u^2 \ \ \mbox{ a.e. } (t,\x)\in (0,+\infty)\times\Omega,
\end{equation}
the initial conditions $(\ref{modelf00})_4$ are satisfied and the following energy inequality (in integral version) holds  a.e.~$t_0,t_1$ with $t_1\geq t_0\geq 0$:
\begin{equation}\label{wsd}
\mathcal{E}(u(t_1),v(t_1)) - \mathcal{E}(u(t_0),v(t_0))
 + \int_{t_0}^{t_1} \left(\Vert \nabla u(s)  \Vert_{0}^2 
 +\frac{1}{2} \Vert \nabla v(s) \Vert_{1}^2
 \right)\ ds \leq0,
\end{equation}
 where
\begin{equation}\label{eneruva}
\mathcal{E}(u,v)=\displaystyle
\frac{1}{2}\Vert u\Vert_{0}^2 + \frac{1}{4}\Vert \nabla v\Vert_{0}^{2}.
\end{equation}
\end{defi}
\begin{obs} Observe that any weak-strong solution of (\ref{modelf00}) 
is $u$-conservative, because the total cell mass $\displaystyle\int_\Omega u(t)$ remains constant in time. Indeed, by taking $\overline{u}=1$ in (\ref{wf01}), one holds 
\begin{equation}\label{CCU}
\int_\Omega u(t)=\int_\Omega u_0 = m_0 |\Omega|, \ \ \forall t>0.
\end{equation}
In particular, $u-m_0$ is a zero mean value function, hence $\|u-m_0\|_1$ is equivalent to $\| \nabla u\|_0$. 
%
\end{obs}

\begin{obs}
In 2D domains, we can take $\overline{u}=u$ in (\ref{wf01}), and testing  (\ref{wf02}) by $-\displaystyle\frac{1}{2}\Delta v$,  integrating by parts and using (\ref{H1divGrad}), we  arrive at the following energy law  (in differential  version):
\begin{equation}\label{weak}
 \frac{d}{dt}  \mathcal{E}(u(t),v(t))
 + \Vert \nabla u (t) \Vert_{0}^2  
 +\frac{1}{2} \Vert \nabla v (t)\Vert_{1}^2
 =0
 \quad \hbox{a.e.~$t>0$}.
\end{equation}
Naturally, this equality is also true in $3D$ domains for regular enough solutions.
\end{obs}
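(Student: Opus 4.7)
The plan is to derive the differential energy equality (\ref{weak}) by testing the two weak formulations (\ref{wf01}) and (\ref{wf02}) with carefully chosen functions and adding the resulting identities. In the $2D$ setting the regularity $u\in L^2(0,T;H^1(\Omega))$ supplied by Definition \ref{ws00} matches exactly the admissible test class $L^q(0,T;H^1(\Omega))$ for (\ref{wf01}) (here $q=2$), so $\bar u=u$ is a legitimate choice. Using it together with the standard Lions-type identity $\langle \partial_t u,u\rangle = \frac{1}{2}\frac{d}{dt}\Vert u\Vert_0^2$ (valid because $u\in L^2(0,T;H^1(\Omega))$ and $\partial_t u\in L^2(0,T;H^1(\Omega)')$) yields
\[
\frac{1}{2}\frac{d}{dt}\Vert u\Vert_0^2+\Vert \nabla u\Vert_0^2+(u\nabla v,\nabla u)=0.
\]

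Next I would rewrite the chemotactic cross term using $u\nabla u=\frac{1}{2}\nabla(u^2)$ to obtain $(u\nabla v,\nabla u)=\frac{1}{2}(\nabla v,\nabla(u^2))$ and then integrate by parts, exploiting $\partial v/\partial\mathbf{n}=0$ on $\partial\Omega$, so that $(u\nabla v,\nabla u)=-\frac{1}{2}(u^2,\Delta v)$. This manipulation is licit since $v\in L^2(0,T;H^2(\Omega))$ supplies $\Delta v\in L^2(0,T;L^2(\Omega))$ and the $2D$ interpolation (\ref{in2D}) combined with $u\in L^\infty(0,T;L^2(\Omega))\cap L^2(0,T;H^1(\Omega))$ gives $u^2\in L^2(0,T;L^2(\Omega))$. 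For the chemical equation, since (\ref{wf02}) holds pointwise and $\Delta v$ lies in $L^2$, I would pair it with $-\frac{1}{2}\Delta v$; another integration by parts using $\partial v/\partial\mathbf{n}=0$ converts $-(\partial_t v,\Delta v)$ into $\frac{1}{2}\frac{d}{dt}\Vert\nabla v\Vert_0^2$ and $-(v,\Delta v)$ into $\Vert\nabla v\Vert_0^2$, producing
\[
\frac{1}{4}\frac{d}{dt}\Vert \nabla v\Vert_0^2+\frac{1}{2}\Vert \Delta v\Vert_0^2+\frac{1}{2}\Vert \nabla v\Vert_0^2=-\frac{1}{2}(u^2,\Delta v).
\]

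Summing the two identities cancels the $\pm\frac{1}{2}(u^2,\Delta v)$ contributions, and the equivalent-norm relation (\ref{H1divGrad}) packages $\Vert\Delta v\Vert_0^2+\Vert\nabla v\Vert_0^2$ into $\Vert\nabla v\Vert_1^2$, delivering exactly (\ref{weak}). I expect the main obstacle to be the rigorous justification of these manipulations at the weak-strong regularity level: one must invoke a Lions-type lemma to identify the duality pairing $\langle \partial_t u,u\rangle$ with $\frac{1}{2}\frac{d}{dt}\Vert u\Vert_0^2$, and one must check that $-\frac{1}{2}\Delta v$ is indeed admissible as a pointwise test in (\ref{wf02}). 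This is precisely where the $3D$ claim becomes more delicate: the admissibility of $\bar u=u$ in (\ref{wf01}) requires $u\in L^4(0,T;H^1(\Omega))$ (since the conjugate exponent is $q=4$ in $3D$), a regularity not furnished by the weak-strong class but available under the additional strong regularity alluded to by the phrase \emph{for regular enough solutions}.
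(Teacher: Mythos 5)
Your proposal is correct and follows exactly the route the paper indicates: testing (\ref{wf01}) with $\bar u=u$, pairing (\ref{wf02}) with $-\frac{1}{2}\Delta v$, integrating by parts so the $\pm\frac{1}{2}(u^2,\Delta v)$ terms cancel, and invoking (\ref{H1divGrad}); your accounting of the admissibility of $\bar u=u$ (the exponent $q=2$ in $2D$ versus $q=4$ in $3D$) also matches the paper's reading of the phrase \emph{regular enough solutions}. No discrepancies to report.
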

\subsubsection{Justification of the Weak-Strong Regularity \eqref{wsa}-\eqref{wsa-bis}}

Observe that from the energy law (\ref{wsd}), and using  (\ref{H1divGrad}), 
 one can deduce 
\begin{equation}\label{e2}
\left\{\begin{array}{l}
(u, \nabla v)\in L^{\infty}(0,+\infty;L^2(\Omega)\times \L^2(\Omega)) , \\
(\nabla u,\nabla v) \in L^{2}(0,+\infty;\L^2(\Omega)\times \H^1(\Omega)) .
\end{array}\right.
\end{equation}
In particular, using \eqref{H1poin} and \eqref{CCU}, one has 
\begin{equation}\label{u-m_0}
u-m_0  \in L^{2}(0,+\infty;H^1(\Omega)).
\end{equation}

Rewriting (\ref{wf02}) as 
$$
\partial_t (v-m_0^2) -\Delta (v-m_0^2) + (v-m_0^2) = (u+m_0)(u-m_0)
$$
and testing by $(v-m_0^2)$ and using \eqref{u-m_0}, one has 
$$
 \frac{d}{dt}  \Vert  v-m_0^2 \Vert_{0}^2 +\Vert   v-m_0^2 \Vert_{1}^2  
 \le C \Vert  u +m_0\Vert_{L^{2}}^2 \Vert  u -m_0 \Vert_{L^{3}}^2  
  \le C  \Vert  u -m_0 \Vert_1^2 \in L^1(0,+\infty).
$$
Therefore, $v-m_0^2 \in L^{\infty}(0,+\infty;L^2(\Omega))\cap L^2(0,+\infty;H^1(\Omega))$ which, together with (\ref{e2}), imply 
\begin{equation*}
v-m_0^2 \in L^{\infty}(0,+\infty;H^1(\Omega)) \cap L^2(0,+\infty;H^2(\Omega)).
\end{equation*}
Finally, the time derivative regularity \eqref{wsa-bis} is deduced from the system \eqref{wf01}-\eqref{wf02} using $L^p$-interpolation inequalities and the  regularity \eqref{wsa}.
\begin{obs}
In $2D$ domains, by using the interpolation inequality \eqref{in2D} (arguing  as for the Navier-Stokes equations \cite{QM}),  one  can  deduce the uniqueness of weak-strong solutions of  (\ref{modelf00}).
\end{obs}

\subsection{The $(u,\boldsymbol\sigma)$-problem (\ref{modelf01})}

\begin{defi} \label{ws00USV}{\bf (Weak solutions of (\ref{modelf01}))} Given $(u_0,{\boldsymbol\sigma}_0)\in L^2(\Omega)\times \L^2(\Omega)$ with $u_0\geq 0$ a.e.~$\x\in \Omega$ and $m_0=\frac1{|\Omega|}\int_\Omega u_0$. A pair $(u,{\boldsymbol\sigma})$ is called weak solution of problem $(\ref{modelf01})$ in $(0,+\infty)$, if $u\geq 0$ a.e.~$(t,\x)\in (0,+\infty)\times\Omega$,
\begin{equation*}\label{wsaUS}
(u-m_0, {\boldsymbol\sigma})\in L^{\infty}(0,+\infty;L^2(\Omega)\times \L^2(\Omega))  \cap L^{2}(0,+\infty;H^1(\Omega)\times \H^1(\Omega)),   
\end{equation*}
\begin{equation*}\label{wsaUS-bis}
(\partial_t u, \partial_t {\boldsymbol\sigma})
 \in L^{q'}(0,T;H^1(\Omega)' \times \H^1(\Omega)'),  \ \ \forall T>0,
\end{equation*}
where $q'$ is as in Definition \ref{ws00}; the following variational formulations hold
\begin{equation}\label{wf01US}
\int_0^T \langle \partial_t u,\overline{u}\rangle + \int_0^T (\nabla u,  \nabla \overline{u}) +\int_0^T (u\,{\boldsymbol\sigma},\nabla \overline{u})=0, \ \ \forall \overline{u}\in L^q(0,T;H^{1}(\Omega)), \ \ \forall T>0,
\end{equation}
\begin{equation}\label{wf01bUS}
\int_0^T \langle \partial_t {\boldsymbol\sigma}, \overline{\boldsymbol\sigma} \rangle +\int_0^T \langle B {\boldsymbol\sigma},\overline{\boldsymbol\sigma} \rangle=2 \int_0^T (u\nabla u,\overline{\boldsymbol\sigma}), \ \forall \overline{\boldsymbol\sigma}\in L^q(0,T;\H^{1}(\Omega)), \ \ \forall T>0,
\end{equation}
the initial conditions $(\ref{modelf01})_5$ are satisfied and the following energy inequality (in integral version) holds  a.e.~$t_0,t_1$ with  $t_1\geq t_0\geq 0$:
\begin{equation}\label{wsdUS}
\mathcal{E}(u(t_1),{\boldsymbol\sigma}(t_1)) - \mathcal{E}(u(t_0),{\boldsymbol\sigma}(t_0))
 + \int_{t_0}^{t_1} (\Vert \nabla u(s)  \Vert_{0}^2 + \frac12\, \Vert  {\boldsymbol\sigma}(s) \Vert_{1}^2 )\ ds \leq0,
\end{equation}
 where 
 \begin{equation}\label{ener01}
\mathcal{E}(u,{\boldsymbol \sigma})=\displaystyle
\frac{1}{2}\Vert u\Vert_{0}^2 + \frac{1}{4}\Vert {\boldsymbol
\sigma}\Vert_{0}^{2}.
\end{equation}
\end{defi}

\subsection{Problems \eqref{modelf00} and \eqref{modelf01}-\eqref{modelf01eqv}  are equivalent} 

\begin{lem}\label{eqREM}
If ${\boldsymbol\sigma}_0=\nabla v_0$, problems (\ref{modelf00}) and (\ref{modelf01})-(\ref{modelf01eqv}) are equivalent in the following sense: 
\begin{itemize}
\item 
If $(u,v)$ is a weak-strong solution of (\ref{modelf00}) then $(u,{\boldsymbol\sigma})$ with ${\boldsymbol\sigma}=\nabla v$ is a weak solution of  (\ref{modelf01}).
\item
Reciprocally, if $(u,{\boldsymbol\sigma})$ is a weak solution of (\ref{modelf01}) and $v=v(u^2)$ is the unique strong solution of problem (\ref{modelf01eqv}), 
 then ${\boldsymbol\sigma}=\nabla v$ and  $(u,v)$ is a weak-strong solution of (\ref{modelf00}). Indeed, since $u^2 \in  L^p(0,T; L^p(\Omega))\cap  L^{q'}(0,T;L^2(\Omega))$ for 
 $p=5/3$ in $3D$ or $p=2$ in $2D$ and  $q'$ is given in Definition \ref{ws00},
  by applying the $L^p$-regularity of problem \eqref{modelf01eqv}, \cite[Theorem 10.22]{feireisl}, 
  one has $v\in L^p(0,T;W^{2,p}(\Omega))\cap L^\infty(0,T;W^{2-2/p,p}(\Omega))\cap
 L^{q'}(0,T;H^2(\Omega))$.
\end{itemize} 
\end{lem}
\begin{proof}
Suppose that $(u,v)$ is a weak-strong solution of (\ref{modelf00}), then testing  (\ref{wf02}) by $-\nabla\cdot \overline{\mathbf{w}}$, for any $\overline{\mathbf{w}}\in L^q(0,T;\H^1(\Omega))$, and taking into account that $\mbox{rot}(\nabla v)=0$,  one obtains
\begin{equation}\label{equvaCONT}
\int_0^T\langle\partial_t \nabla v, \overline{\mathbf{w}}\rangle + \int_0^T \langle  B \nabla v,\overline{\mathbf{w}}\rangle = 2\int_0^T(u\nabla u, \overline{\mathbf{w}}), \ \ \forall\,
\overline{\mathbf{w}}\in L^q(0,T;\H^1(\Omega)).
\end{equation}
Then, defining ${\boldsymbol\sigma}=\nabla v$ and assuming the hypothesis ${\boldsymbol\sigma}_0=\nabla v_0$, from (\ref{equvaCONT}) one can conclude that $(u,{\boldsymbol\sigma})$ is a weak solution of  (\ref{modelf01}). On the other hand, if $(u,{\boldsymbol\sigma})$ is a weak solution of (\ref{modelf01}) and $v=v(u^2)$ is the unique strong solution of problem (\ref{modelf01eqv}), reasoning as above, 
 it can be  concluded that  $\nabla v$ satisfies (\ref{equvaCONT}). Therefore, from (\ref{wf01bUS}) and (\ref{equvaCONT}),  one obtains
\begin{equation}\label{equva01CONT}
\int_0^T\langle\partial_t ({\boldsymbol\sigma} - \nabla v), \overline{\mathbf{w}} \rangle + \int_0^T \langle  B ({\boldsymbol\sigma}-\nabla v),\overline{\mathbf{w}} \rangle = 0, \ \ \forall\,
\overline{\mathbf{w}} \in L^q(0,T;\H^1(\Omega)).
\end{equation}
Then, since 
${\boldsymbol\sigma}-\nabla v \in L^\infty(0,T;\H^1(\Omega)')$, 
taking 
$\overline{\mathbf{w}}=B^{-1}({\boldsymbol\sigma}-\nabla v) \in L^\infty(0,T;\H^1(\Omega))$
 in (\ref{equva01CONT}), 
we deduce
\begin{equation*}
\frac{1}{2} \Vert B^{-1}({\boldsymbol\sigma}(T)-\nabla v(T))\Vert_1^2  + \int_0^T \Vert {\boldsymbol\sigma}-\nabla v\Vert_0^2=\frac{1}{2} \Vert B^{-1}({\boldsymbol\sigma}(0)-\nabla v(0))\Vert_1^2=0,
\end{equation*}

where, in the last equality, the relation ${\boldsymbol\sigma}(0)=\nabla v(0)$  was used, and therefore 
 one can deduce ${\boldsymbol\sigma}=\nabla v$. Thus, $(u,v)$ is a weak-strong solution of (\ref{modelf00}).
\end{proof}

\begin{obs}\label{rvpos}
Since $v_0\ge 0$ in $\Omega$, then the unique strong solution $v=v(u^2)$ of problem (\ref{modelf01eqv}) satisfies $v\ge 0$ in $(0,+\infty)\times \Omega$.
\end{obs}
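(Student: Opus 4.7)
The strategy is a standard maximum principle / energy argument applied to the negative part of $v$. Set $v^- := \min(v,0)$, which is nonpositive a.e., belongs to $L^2(0,T;H^1(\Omega))$ whenever $v$ does, and satisfies $\nabla v^- = \nabla v \cdot \chi_{\{v<0\}}$ (so in particular $\nabla v \cdot \nabla v^- = |\nabla v^-|^2$ and $v\, v^- = |v^-|^2$ a.e.). Because Lemma \ref{eqREM} guarantees $v\in L^p(0,T;W^{2,p}(\Omega))$ with $\partial_t v\in L^{q'}(0,T;L^2(\Omega))$ (in fact $\partial_t v \in L^p(0,T;L^p(\Omega))$ by parabolic maximal regularity for the source $u^2\in L^p$), the function $v^-$ is an admissible test function and the chain rule yields $\int_\Omega \partial_t v \cdot v^- = \tfrac{1}{2}\tfrac{d}{dt}\|v^-\|_0^2$.

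The plan is then to test (\ref{modelf01eqv})$_1$ pointwise in time by $v^-$, integrate over $\Omega$ and use the Neumann boundary condition. After integration by parts of the Laplacian term, I obtain
\begin{equation*}
\frac{1}{2}\frac{d}{dt}\|v^-(t)\|_0^2 + \|\nabla v^-(t)\|_0^2 + \|v^-(t)\|_0^2 = \int_\Omega u^2(t)\, v^-(t)\, d\x.
\end{equation*}
The right-hand side is nonpositive because $u^2\ge 0$ and $v^-\le 0$ a.e., so dropping the two nonnegative dissipative contributions on the left gives the differential inequality $\frac{d}{dt}\|v^-(t)\|_0^2 \le 0$. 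Together with the initial condition $v_0\ge 0$, which forces $v^-(0)\equiv 0$, an immediate integration (or Gronwall) yields $\|v^-(t)\|_0 = 0$ for all $t>0$, i.e.\ $v\ge 0$ a.e.\ in $(0,+\infty)\times\Omega$, as claimed.

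The only delicate point is justifying the chain rule $\int_\Omega \partial_t v \cdot v^- = \tfrac{1}{2}\tfrac{d}{dt}\|v^-\|_0^2$ at the regularity level provided by Lemma \ref{eqREM}. If one is uncomfortable invoking the Stampacchia-type chain rule directly, one can instead test with the smooth approximation $\phi_\varepsilon(v):=-\sqrt{\varepsilon^2 + (v^-)^2}+\varepsilon$ (nonpositive, $C^1$, converging monotonically to $v^-$) and pass to the limit $\varepsilon\to 0^+$ using dominated convergence, which is routine. No other step presents any real difficulty; the whole argument is a clean application of the parabolic maximum principle tailored to the mixed-Neumann setting.
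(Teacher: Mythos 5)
Your proof is correct and follows exactly the approach the paper intends: the paper states this remark without proof, but applies the identical negative-part test-function argument to the discrete equation in Remark \ref{pvr}, testing by $(v_n)_-=\min\{v_n,0\}$ and concluding $\Vert (v_n)_-\Vert_1^2\le 0$. Your continuous version, including the justification of the chain rule $\int_\Omega \partial_t v\, v^- = \tfrac12\tfrac{d}{dt}\Vert v^-\Vert_0^2$ at the available regularity, is sound.
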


Later, in Section \ref{SecDiscret}, uniform estimates of a time discrete scheme  approximating  problem (\ref{modelf01})-(\ref{modelf01eqv}) will be proved,  
which will allow  to pass to the limit in the discrete problem in order to obtain the existence of weak solutions of problem (\ref{modelf01})-(\ref{modelf01eqv}) (in the sense of Definition~\ref{ws00USV}).
Finally,  taking into account Lemma~\ref{eqREM}, one also has the existence of  weak-strong solutions of problem (\ref{modelf00}) (in the sense of Definition~\ref{ws00}).

\subsection{A regularity criterium implying global in time strong regularity}
Strong regularity of problem (\ref{modelf00}) is going to be deduced in a formal manner, without justifying  the computations and assuming sufficient regularity for the initial data $(u_0,v_0)$. In fact, a rigorous proof could be made via a regularization argument or a Galerkin approximation.

\

In order to abbreviate, we introduce the notation: 
$$(\hat u, \hat v)= (u-m_0,v-m_0^2).$$
We  consider the following  formulation of  the problem \eqref{modelf00}:
\begin{equation}  \label{fuertes10-c}
\left\{
\begin{array}
[c]{lll}%
\partial_t \hat u+ \widehat A \hat u = \nabla\cdot ((\hat u+m_0)\nabla v), 
\\
\partial_t \hat v +A \hat v =  (\hat u+ 2 m_0)\hat u.
\end{array}
\right.  
\end{equation}
\begin{lem}\label{Lm:StrongIneq} {\bf(Strong inequality for $(\hat u,\hat v)$) }  It holds
\begin{equation}\label{StrongIneq}
\frac{d}{dt} 
\Big(\Vert  (\hat u, \hat v)\Vert_{1\times 2}^2 
+ \Vert   \partial_t \hat v\Vert_{0}^2 \Big)
 + \Vert (\hat u, \hat v)\Vert_{2\times 3}^2
 +  \Vert  (\partial_t \hat  u, \partial_t \hat v)\Vert_{0\times 1}^2
\le
 C_1 (\Vert  (\hat u,\hat v)\Vert_{1\times 2}^2)^d + C_2 \Vert (\hat u, \hat v)\Vert_{1\times 2}^2 
\end{equation}
where $d=2$ for $2D$ domains and $d=3$ for $3D$ domains.
\end{lem}
\begin{proof}
By testing \eqref{fuertes10-c}$_1$ by $\widehat A \hat u + \partial_t \hat u$ and \eqref{fuertes10-c}$_2$ by $A( A \hat v + \partial_t   \hat v)$,  bounding the right hand side using either \eqref{in2D} in $2D$ or \eqref{in3D} in $3D$, 
and the Young inequality, 
 one has
 \begin{eqnarray*}\label{fuertesb}
&&\displaystyle  \frac{d}{dt} \Vert (\nabla\hat u,A\hat v)\Vert_{0}^2
+ \Vert (\partial_t \hat u, \partial_t \hat v)\Vert_{0\times 1}^2
+ \Vert (\widehat A \hat u, A \hat v)\Vert_{0\times 1}^2 
\\
 &&\quad \leq C(\Vert  (\hat u+m_0) ( \nabla \hat v+ \hat u)\Vert_1^2  
   \leq C\,\Vert (\hat u+m_0,\nabla \hat v)\Vert^d_1\Vert (\hat u,\nabla \hat v)\Vert_{2}
 \\
&& \quad \le \frac12 \Vert (\widehat A \hat u, A\hat v )\Vert_{0\times 1}^2
 +C_1 \Big(\Vert (\nabla \hat u,A \hat v) \Vert_{0}^2\Big)^d + C_2 \Vert  (\hat u, \hat v)\Vert_{1\times 2}^2 
\end{eqnarray*}
hence \eqref{StrongIneq} holds, taking into account that $\Vert \nabla \hat u \Vert_0$ is equivalent to $\Vert \hat u \Vert_1$ thanks to (\ref{H1poin}),
$\Vert A \hat v \Vert_0$ is equivalent to $\Vert \hat v \Vert_3$ thanks to (\ref{B101}), and 
$\Vert \hat A \hat u \Vert_0$ is equivalent to $\Vert \hat u \Vert_2$ thanks to (\ref{B001}). 
%
%
\end{proof}

\begin{cor}\label{Cor:StrongReg} {\bf(Strong regularity for $(u,v)$) }
Let $(u_0, v_0)\in H^1(\Omega)\times H^2(\Omega)$. Assume 
\begin{itemize}
\item either  $dD$ domains with $d\le 2$, or
\item
 $3D$ domains and the following regularity criterion:
\begin{equation}\label{def-strongA}
(u, v) \in  L^{\infty}(0,+\infty;H^1(\Omega)\times H^2(\Omega)).
\end{equation}
\end{itemize}
  Then, the following  regularity holds
\begin{equation}\label{ssa}
(\hat u,\hat v) \in L^{\infty}(0,+\infty;H^1(\Omega)\times H^2(\Omega)) 
 \cap L^{2}(0,+\infty;H^2(\Omega)\times H^3(\Omega)),  
\end{equation}
\begin{equation}\label{ssa-bis}
(\partial_t u,\partial_t v) \in L^{2}(0,+\infty;L^2(\Omega)\times H^1(\Omega)).
\end{equation}
\end{cor}
\begin{proof}
By using the weak regularity \eqref{wsa}, it suffices to apply Lemma \ref{GL} to \eqref{StrongIneq}, directly for $2D$ domains, and using previously \eqref{def-strongA}  for $3D$ domains.
 \end{proof}

\bigskip

Moreover, taking the time derivative of  (\ref{modelf00})$_2$ and testing by  $\partial_t v$, one obtains
$$
\begin{array}{l}
\displaystyle\frac{1}{2} \frac{d}{dt} \Vert \partial_t v \Vert_0^2  + \Vert \partial_t v \Vert_1^2  
\le 2\Vert u \Vert_{L^6} \Vert \partial_t u \Vert_0 \Vert \partial_t v \Vert_{L^3}
 \le  \varepsilon \Vert \partial_t v \Vert_1^2  
 + C_{\varepsilon} \Vert u \Vert_1^2 \Vert \partial_t u \Vert_0^2, 
\end{array}
$$
hence, since $\Vert u \Vert_1^2 \Vert \partial_t u \Vert_0^2  \in L^1(0,+\infty)$, one arrives at 
\begin{equation}\label{prev-a}
\partial_t v \in  L^{\infty}(0,+\infty;L^2(\Omega))\cap L^2(0,+\infty;H^1(\Omega)).
\end{equation}

\subsection{Global in time higher regularity}
Denote  $\widetilde{u}=\partial_t u$ and $\widetilde{v}=\partial_t v$. Then, derivating in time  (\ref{modelf00})  one deduces that $(\widetilde{u},\widetilde{v})$ satisfies
\begin{equation}
\left\{
\begin{array}
[c]{lll}%
\partial_t \widetilde{u} -\Delta \widetilde{u}  = -\nabla\cdot(\widetilde{u}\nabla v) - \nabla\cdot(u\nabla \widetilde{v}),\\
\partial_t \widetilde{v} - \Delta \widetilde{v} + \widetilde{v}=2 u \widetilde{u}.
\end{array}
\right.  \label{pbch001}
\end{equation}
\begin{lem}\label{Lm:RegIneq} Under conditions of Corollary \ref{Cor:StrongReg},
it holds
\begin{equation} \label{ut-vt}
 \frac d {dt} \left(  \Vert  \widetilde{u}\Vert^2_{0} +\frac{1}{2}\Vert \nabla  \widetilde{v} \Vert^2_{0}\right) + \Vert  \widetilde{u}\Vert^{2}_{1} 
 + \displaystyle\frac{1}{2} \Vert   \nabla \widetilde{v}\Vert^{2}_{1}  
 \leq  C  
\Vert \widetilde{u} \Vert_0^2 .
\end{equation}
\end{lem}
\begin{proof}
Testing  (\ref{pbch001})$_1$ by $\widetilde{u}$  and (\ref{pbch001})$_2$ by $\displaystyle - \frac{1}{2}\Delta \widetilde{v}$ and adding, the terms $(u\nabla \tilde v,\tilde u)$ cancel,  taking into account that $\displaystyle \int_\Omega \widetilde{u}=0$, using the $3D$ interpolation inequality (\ref{in3D}) and regularity (\ref{def-strongA}),  one has
\begin{eqnarray*}\label{pbch002}
&\displaystyle  \frac{1}{2} \frac d {dt} &\!\!\!\!\!\left(  \Vert  \widetilde{u}\Vert^2_{0} +\frac{1}{2}\Vert \nabla  \widetilde{v} \Vert^2_{0}\right) + \Vert  \widetilde{u}\Vert^{2}_{1} 
 + \displaystyle\frac{1}{2} \Vert   \nabla \widetilde{v}\Vert^{2}_{1}  
 = -( \widetilde{u}\nabla v,\nabla  \widetilde{u})+( \widetilde{u}\nabla u,\nabla  \widetilde{v})
 \nonumber\\
&& \!\!\!\!\!\!\!
\leq \Vert \widetilde{u} \Vert_{L^3} \Big(\Vert \nabla v \Vert_{L^6} \Vert \nabla \widetilde{u}\Vert_{0} + \Vert \nabla \widetilde{v} \Vert_{L^6} \Vert \nabla u \Vert_{0} \Big)
\leq  \frac{1}{2} \Big( \Vert \widetilde{u} \Vert_{1}^2 +  \frac{1}{2} \Vert \nabla \widetilde{v} \Vert_{1}^2 \Big)
+ C  
\Vert \widetilde{u} \Vert_0^2 ,
\end{eqnarray*}
hence \eqref{ut-vt} holds.
\end{proof}
Therefore, 
since $ \Vert \widetilde{u} \Vert_0^2\in L^1(0,+\infty)$ (owing to (\ref{ssa-bis})) 
 and using (\ref{prev-a}), one has 
\begin{cor}\label{Cor:Reg} 
Let $(u_0, v_0)\in H^2(\Omega)\times H^3(\Omega)$. Under hypotheses of Corollary \ref{Cor:StrongReg},  the following  regularity holds
\begin{equation*}
(\partial_t u, \partial_t v) \in L^{\infty}(0,+\infty;L^2(\Omega)\times H^1(\Omega))
\cap L^2(0,+\infty;H^1(\Omega)\times H^2(\Omega)).
\end{equation*}
\end{cor}

Finally, from system  (\ref{pbch001}) and taking into account (\ref{ssa})-(\ref{ssa-bis}), one has 
\begin{equation*}
(\partial_{tt} u , \partial_{tt} v) \in L^2(0,+\infty;H^1(\Omega)'\times L^2(\Omega)) .
\end{equation*}
\begin{obs}
By following with a bootstrap argument, it is possible to obtain more regularity for $(u,v)$. For instance, by assuming the $H^3$ and $H^4$-regularity of problems (\ref{B001}) and (\ref{B101}), one obtains  $(\hat u,\hat v) \in L^{\infty}(0,+\infty;H^2(\Omega)\times H^3(\Omega)) 
 \cap L^{2}(0,+\infty;H^3(\Omega)\times H^4(\Omega))$, which in particular implies $(u,v) \in L^\infty(0,+\infty;L^\infty(\Omega)\times L^\infty(\Omega))$.
Therefore, any global in time weak-strong solution satisfying \eqref{def-strongA} does not blow-up, neither at finite time nor infinite one. But we stop here, because the regularity obtained so far is sufficient  to guarantee the hypotheses required later in order to prove error estimates  (see Theorems \ref{erteo}, \ref{erteoV}, \ref{erteolin} and \ref{erteolinV} below).
\end{obs}
%
%

\section{Euler time discretization}\label{SecDiscret}
In this section,  the Euler time discretization for the problem (\ref{modelf01})-(\ref{modelf01eqv}) is studied, proving  its solvability, unconditional stability (in weak norms, see Definition \ref{enesf} below),   
and convergence towards weak solutions of (\ref{modelf01})-(\ref{modelf01eqv}). 
Moreover, some additional properties  as positivity, 
$u$-conservation, and error estimates are also studied.
 Finally, unique solvability of the scheme will be proved under a hypothesis that relates the time step and a strong norm of the scheme (see \eqref{uniq01} below, which is the discrete version of (\ref{def-strongA})). 
 This  strong norm  is  bounded  in the case of $1D$ and $2D$ domains.\\

Let us consider a fixed partition of the time interval $[0,+\infty)$ given by $t_n=nk$, where $k>0$ denotes the time step (that we take constant for simplicity). Taking into account the $(u,v)$-problem  (\ref{modelf00}) and the $(u,\boldsymbol\sigma)$-problem (\ref{modelf01}), the two following first-order, nonlinear and coupled (Backward Euler) schemes are considered (hereafter, we denote $\delta_t a_n=
(a_n - a_{n-1})/k$): 
\begin{itemize}
\item{\underline{\emph{Scheme \textbf{UV}}:}\\
{\bf Initialization:} 
We take $(u_0,v_0) =( u(0), v(0))$.\\
{\bf Time step} n: Given $(u_{n-1},v_{n-1})\in  H^{1}(\Omega)\times {H}^{2}(\Omega)$ 
with $u_{n-1}\geq 0$ and $v_{n-1}\geq 0$, 
compute $(u_{n},v_{n})\in  H^{1}(\Omega)\times {H}^{2}(\Omega)$ 
with $u_{n}\geq 0$, $v_{n}\geq 0$ and solving
\begin{equation}
\left\{
\begin{array}
[c]{lll}%
(\delta_t u_n,\overline{u}) + (\nabla u_n, \nabla \overline{u}) +(u_n \nabla v_n,\nabla \overline{u})=0, \ \ \forall \overline{u}\in H^{1}(\Omega),\\
\delta_t v_n + A v_n  -u_n^2=0\ \ \mbox{ a.e.~in $\Omega$}.
\end{array}
\right.  \label{modelfuv02}
\end{equation}
}
\item{\underline{\emph{Scheme \textbf{US}}:}\\
{\bf Initialization:} 
We take $(u_0,v_0) =( u(0), v(0))$ and ${\boldsymbol\sigma}_0=\nabla v_0$.
\\
{\bf Time step} n: Given $(u_{n-1},{\boldsymbol \sigma}_{n-1})\in  H^{1}(\Omega)\times \H^{1}_{\sigma}(\Omega)$ 
with $u_{n-1}\geq 0$, 
compute $(u_{n},{\boldsymbol \sigma}_{n})\in  H^{1}(\Omega)\times \H^{1}_{\sigma}(\Omega)$ with $u_{n}\geq 0$ and solving
\begin{equation}
\left\{
\begin{array}
[c]{lll}%
(\delta_t u_n,\overline{u}) + (\nabla u_n, \nabla \overline{u})  +(u_n{\boldsymbol \sigma}_n,\nabla \overline{u})=0, \ \ \forall \overline{u}\in H^{1}(\Omega),\\
(\delta_t {\boldsymbol \sigma}_n,\overline{\boldsymbol \sigma}) +
\langle B {\boldsymbol \sigma}_n, \overline{\boldsymbol
\sigma}\rangle  -
2(u_n\nabla u_n,\overline{\boldsymbol \sigma})=0,\ \ \forall
\overline{\boldsymbol \sigma}\in \H^{1}_{\sigma}(\Omega).
\end{array}
\right.  \label{modelf02}
\end{equation}}

Once \eqref{modelf02} is solved, given $v_{n-1}\in {H}^{2}(\Omega)$
with $v_{n-1}\geq 0$, then 
$v_n=v_n(u_n^2) \in H^2(\Omega)$ (with $v_{n}\geq 0$) can be recovered by solving: 
\begin{equation}\label{edovf}
\delta_t v_n + A v_n = u_n^2 \ \ \mbox{ a.e.~in $\Omega$}.
\end{equation}
\end{itemize}

\begin{obs}[\bf Regularity and positivity  of $v_n$] \label{pvr}
It is not difficult to prove that, given $u_n \in H^1(\Omega)$ and $v_{n-1}\in {H}^{2}(\Omega)$, there exists a unique $v_n \in H^2(\Omega)$ solution of (\ref{edovf}). Even more, using the $H^3$-regularity of problem (\ref{B101}), we can prove that $v_n \in H^3(\Omega)$. Moreover, if $v_{n-1}\geq 0$ then $v_n\geq 0$. Indeed, testing  (\ref{edovf}) by  $(v_n)_{-}=\min\{v_n,0\}\leq 0$, and taking into account that 
$(v_n)_{-}\in H^1(\Omega)$ with
$\nabla((v_n)_{-})=\nabla (v_n)$ if $(v_n)\leq 0$, and
$\nabla((v_n)_{-})=0$ if $(v_n)> 0$, we obtain 
\begin{equation*}\label{posv1}
\displaystyle\frac{1}{k} \Vert(v_n)_{-}\Vert_0^2 -
\frac{1}{k}\int_\Omega v_{n-1} (v_n)_{-} + 
\Vert\nabla((v_n)_{-}) \Vert_0^2 + 
\Vert(v_n)_{-} \Vert_0^2 = \int_\Omega u_n^2
 (v_n)_{-}\leq 0.
\end{equation*} 
Then, since $v_{n-1}\geq 0$, 
we conclude $\Vert(v_n) _{-}\Vert_{1}^2 \leq 0$, hence 
$(v_n)_{-}= 0$ a.e.~in $\Omega$,  i.e.~$v_n\geq 0$ in $\Omega$.
\end{obs}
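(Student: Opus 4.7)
The plan is to view (\ref{edovf}) as the coercive elliptic Neumann problem
\begin{equation*}
-\Delta v_n + \Bigl(1+\tfrac{1}{k}\Bigr) v_n = u_n^2 + \frac{v_{n-1}}{k} \ \mbox{ in } \Omega, \quad \frac{\partial v_n}{\partial \mathbf{n}} = 0 \ \mbox{ on } \partial\Omega,
\end{equation*}
and handle the three assertions separately. For existence and uniqueness of $v_n\in H^2(\Omega)$, I would first apply Lax--Milgram: the bilinear form is continuous and coercive on $H^1(\Omega)$ (the $1/k$ zeroth-order term gives direct coercivity), and the right-hand side is in $L^2(\Omega)$ since $u_n\in H^1(\Omega)\hookrightarrow L^4(\Omega)$ forces $u_n^2\in L^2(\Omega)$, while $v_{n-1}\in H^2(\Omega)\subset L^2(\Omega)$. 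The $H^2$-regularity inequality in (\ref{H2us}) then lifts the $H^1$-solution to $H^2(\Omega)$, and uniqueness follows from coercivity.

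For the $H^3$-upgrade I would use the second inequality in (\ref{H2us}) applied to $A v_n = u_n^2 + (v_{n-1}-v_n)/k$, which requires its right-hand side to belong to $H^1(\Omega)$. Since $v_n, v_{n-1}\in H^2(\Omega)$, the issue reduces to showing $u_n^2\in H^1(\Omega)$, i.e.\ $u_n\nabla u_n\in \L^2(\Omega)$. In 2D this follows from (\ref{in2D}) and Hölder after upgrading $u_n$ to $L^p(\Omega)$ for any finite $p$. In 3D the embedding $H^1\hookrightarrow L^6$ gives only $u_n\nabla u_n\in \L^{3/2}(\Omega)$, so the claim must exploit extra regularity of $u_n$ coming from the scheme itself (where $u_n$ solves its own elliptic equation with fixed $v_n$, and so enjoys more than the bare $H^1$-bound); this bootstrap between the two equations in the coupled system is the step I expect to be the most delicate.

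For positivity I would test (\ref{edovf}) with the truncation $(v_n)_- := \min\{v_n,0\}\in H^1(\Omega)$, using the standard identity $\nabla (v_n)_- = \mathbf{1}_{\{v_n<0\}}\nabla v_n$. The diffusion and zeroth-order contributions collapse to $\Vert\nabla (v_n)_-\Vert_0^2$ and $\Vert (v_n)_-\Vert_0^2$; the discrete time-derivative yields $\frac{1}{k}\Vert (v_n)_-\Vert_0^2 - \frac{1}{k}\int_\Omega v_{n-1}(v_n)_-$; and the right-hand side produces $\int_\Omega u_n^2 (v_n)_-$. Using $v_{n-1}\geq 0$ and $u_n^2\geq 0$ together with $(v_n)_-\leq 0$, the two remaining inner products have favourable signs and can be dropped, giving $\Vert (v_n)_-\Vert_1^2\leq 0$, hence $(v_n)_-\equiv 0$ and $v_n\geq 0$ a.e.\ in $\Omega$.

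In summary, the $H^2$-existence step is a routine coercive-Neumann argument, the positivity step is a standard Stampacchia truncation, and the main (mild) obstacle is justifying $u_n^2\in H^1(\Omega)$ for the $H^3$-lift in 3D, for which one must appeal to the additional smoothness of $u_n$ provided by the coupled scheme rather than to its sole $H^1$-membership.
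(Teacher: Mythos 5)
Your proposal is correct and follows essentially the same route as the paper: the positivity argument via testing (\ref{edovf}) with $(v_n)_-$ is identical to the one written out in the remark, and the Lax--Milgram plus elliptic-regularity argument is the standard one the authors leave implicit. You are also right to flag the $H^3$-lift as the only delicate point: the claim $v_n\in H^3(\Omega)$ silently uses that $u_n^2\in H^1(\Omega)$, which does not follow from $u_n\in H^1(\Omega)$ alone (in 2D or 3D one only gets $u_n\nabla u_n$ in some $L^q$ with $q<2$), but is supplied by the extra regularity of $u_n$ obtained through the coupled bootstrap of Proposition \ref{regUS}.
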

\begin{lem}[\bf Equivalence of both schemes] 
\label{eqLEM}
If ${\boldsymbol\sigma}_{n-1}=\nabla v_{n-1}$, the schemes \textbf{UV} and \textbf{US} are equivalent in the following sense:
\begin{itemize}
\item  If $(u_n,v_n)$ solves scheme \textbf{UV} then $(u_n,{\boldsymbol\sigma}_n)$ with ${\boldsymbol\sigma}_n=\nabla v_n$ solves  scheme \textbf{US}.
\item
Reciprocally, if $(u_n,{\boldsymbol\sigma}_n)$ solves scheme \textbf{US} and $v_n=v_n(u_n^2)$ is the unique solution of (\ref{edovf}), then ${\boldsymbol\sigma}_n=\nabla v_n$ and  $(u_n,v_n)$ solves scheme \textbf{UV}.
\end{itemize}
\end{lem}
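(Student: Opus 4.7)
The plan is to mirror at the discrete level the argument already carried out in the continuous case (Lemma~\ref{eqREM}). For the forward implication, assume $(u_n,v_n)$ solves Scheme \textbf{UV} and set ${\boldsymbol\sigma}_n=\nabla v_n$. The Neumann condition $\partial v_n/\partial\mathbf{n}=0$ gives ${\boldsymbol\sigma}_n\cdot\mathbf{n}=0$, while $\mbox{rot}(\nabla v_n)\equiv 0$ in $\Omega$ trivially yields the tangential boundary condition in (\ref{B201}); hence ${\boldsymbol\sigma}_n\in\H^1_\sigma(\Omega)$. Applying $\nabla$ to the pointwise equation $(\ref{modelfuv02})_2$ and using the identity
\[
B(\nabla v)=-\nabla(\nabla\cdot\nabla v)+\mbox{rot}(\mbox{rot}(\nabla v))+\nabla v=-\nabla(\Delta v)+\nabla v=\nabla(Av),
\]
valid precisely because $\mbox{rot}(\nabla v)=0$, one obtains that ${\boldsymbol\sigma}_n=\nabla v_n$ satisfies the variational equation $(\ref{modelf02})_2$, while the first equation of \textbf{UV} transcribes verbatim into the first equation of \textbf{US} (just replacing $\nabla v_n$ by ${\boldsymbol\sigma}_n$).

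For the converse, assume $(u_n,{\boldsymbol\sigma}_n)$ solves Scheme \textbf{US} and let $v_n=v_n(u_n^2)\in H^2(\Omega)$ be the unique solution of (\ref{edovf}) whose existence and $H^2$-regularity are ensured by Remark~\ref{pvr}. The same computation as above shows that $\nabla v_n$ also solves the variational equation $(\ref{modelf02})_2$. Subtracting, the difference $\boldsymbol{\xi}_n:={\boldsymbol\sigma}_n-\nabla v_n\in\H^1_\sigma(\Omega)$ satisfies
\[
(\delta_t\boldsymbol{\xi}_n,\bar{\boldsymbol\sigma})+\langle B\boldsymbol{\xi}_n,\bar{\boldsymbol\sigma}\rangle=0,\ \ \forall\bar{\boldsymbol\sigma}\in\H^1_\sigma(\Omega).
\]
The hypothesis ${\boldsymbol\sigma}_{n-1}=\nabla v_{n-1}$ gives $\boldsymbol{\xi}_{n-1}=0$, so $\delta_t\boldsymbol{\xi}_n=\boldsymbol{\xi}_n/k$. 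Testing with $\bar{\boldsymbol\sigma}=\boldsymbol{\xi}_n$ and using the equivalent $H^1$-norm (\ref{H1div}) yields
\[
\frac{1}{k}\Vert\boldsymbol{\xi}_n\Vert_0^2+\Vert\boldsymbol{\xi}_n\Vert_1^2=0,
\]
which forces ${\boldsymbol\sigma}_n=\nabla v_n$. Substituting this identity into $(\ref{modelf02})_1$ recovers $(\ref{modelfuv02})_1$, completing the proof that $(u_n,v_n)$ solves Scheme \textbf{UV}.

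The only delicate points are the reduction $B(\nabla v)=\nabla(Av)$, which is what makes the two formulations truly equivalent on gradient fields, and the coercivity of $B$ packaged in (\ref{H1div}), which provides the uniqueness needed for the $\boldsymbol{\xi}_n$-equation. Verifying membership of $\nabla v_n$ in $\H^1_\sigma(\Omega)$ via the Neumann boundary condition is routine; the continuous argument in Lemma~\ref{eqREM} had to invoke an inverse $B^{-1}$ because only an integral-in-time equation was available, but here the discrete time derivative reduces to multiplication by $1/k$, making the test-by-$\boldsymbol{\xi}_n$ argument direct and coercive without any auxiliary regularization.
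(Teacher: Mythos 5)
Your proposal is correct and follows essentially the same route as the paper: the forward direction transfers the $v_n$-equation into the variational identity satisfied by $\nabla v_n$ (the paper does this by testing $(\ref{modelfuv02})_2$ against $-\nabla\cdot\bar{\mathbf{w}}$ rather than via the pointwise identity $B(\nabla v)=\nabla(Av)$, but the content is identical), and the converse shows that ${\boldsymbol\sigma}_n-\nabla v_n$ solves the homogeneous coercive equation and hence vanishes. Your simplification $\delta_t\boldsymbol{\xi}_n=\boldsymbol{\xi}_n/k$ using $\boldsymbol{\xi}_{n-1}=0$ is an equivalent, slightly more direct version of the paper's use of the identity $a(a-b)=\tfrac{1}{2}(a^2-b^2)+\tfrac{1}{2}(a-b)^2$.
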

\begin{proof}
Suppose that $(u_n,v_n)$ is a solution of the scheme \textbf{UV}, then testing  (\ref{modelfuv02})$_2$ by $-\nabla\cdot \overline{\mathbf{w}}$, for any $\overline{\mathbf{w}}\in \H^{1}_{\sigma}(\Omega)$, and taking into account that $\mbox{rot}(\nabla v_n)=0$, we obtain
\begin{equation}\label{equva}
(\delta_t \nabla v_n, \overline{\mathbf{w}}) + \langle  B \nabla v_n,\overline{\mathbf{w}}\rangle = 2(u_n\nabla u_n, \overline{\mathbf{w}}), \ \ \forall\,
\overline{\mathbf{w}}\in \H^{1}_{\sigma}(\Omega).
\end{equation}
Then, defining ${\boldsymbol\sigma}_n=\nabla v_n$ and assuming the hypothesis ${\boldsymbol\sigma}_{n-1}=\nabla v_{n-1}$, from (\ref{equva}) we conclude that $(u_n,{\boldsymbol\sigma}_n)$ is solution of  the scheme \textbf{US}.  On the other hand, if $(u_n,{\boldsymbol\sigma}_n)$ is a solution of  the scheme \textbf{US} and $v_n$ satisfies (\ref{edovf}), reasoning as above, we conclude that  $\nabla v_n$ satisfies (\ref{equva}). Therefore, from (\ref{modelf02})$_2$ and (\ref{equva}), we obtain
\begin{equation}\label{equva01}
(\delta_t ({\boldsymbol\sigma}_n-\nabla v_n), \overline{\boldsymbol\sigma}) + \langle B ({\boldsymbol\sigma}_n-\nabla v_n),\overline{\boldsymbol\sigma}\rangle = 0, \ \ \forall\,
\overline{\boldsymbol\sigma}\in \H^{1}_{\sigma}(\Omega).
\end{equation}
Then, taking $\overline{\boldsymbol\sigma}={\boldsymbol\sigma}_n-\nabla v_n$ in (\ref{equva01}) and using the formula
$a(a-b)=\displaystyle\frac{1}{2}(a^2-b^2)+\displaystyle\frac{1}{2}(a-b)^2$, we deduce
\begin{equation*}
\delta_t\left(\frac{1}{2} \Vert {\boldsymbol\sigma}_n-\nabla v_n\Vert_0^2 \right) + \frac{k}{2} \Vert \delta_t({\boldsymbol\sigma}_n-\nabla v_n)\Vert_0^2 + \Vert {\boldsymbol\sigma}_n-\nabla v_n\Vert_1^2=0,
\end{equation*}
which implies that ${\boldsymbol\sigma}_n=\nabla v_n$ using that  ${\boldsymbol\sigma}_{n-1}=\nabla v_{n-1}$. Thus, we conclude that $(u_n,v_n)$ is solution of  the scheme \textbf{UV}.
\end{proof}
\begin{obs}
Since both time schemes \textbf{UV} and \textbf{US} are equivalent, we will study the scheme \textbf{US} 
in order to facilitate the notation throughout the paper. 
On the other hand, both schemes will furnish different 
 fully discrete schemes  when considering for instance a Finite Element spatial approximation, which will be analyzed in the forthcoming paper \cite{FMD2}. In fact,  it will be necessary to use the variable  ${\boldsymbol\sigma}_n$  in order to obtain a fully discrete unconditionally energy-stable scheme.
 \end{obs}
\subsection{Conservation, Solvability, Energy-Stability and Convergence}
\subsubsection{Conservation} 
Taking $\bar{u}=1$ in (\ref{modelf02})$_1$ we see that the scheme  \textbf{US} is $u$-conservative, that is:
\begin{equation}\label{consu}
\int_\Omega u_n=\int_\Omega u_{n-1}=\cdot\cdot\cdot=\int_\Omega
u_{0}.
\end{equation}
In particular, if we denote $\hat u_n=u_n-m_0$, then $\displaystyle\int_\Omega \hat u_n=0$.
\subsubsection{Solvability} 
\begin{tma} {\bf(Unconditional existence and conditional uniqueness)} \label{USus}
There exists $(u_n,{\boldsymbol\sigma}_n)\in  H^{1}(\Omega)\times \H^{1}_{\sigma}(\Omega)$ solution of  the scheme \textbf{US}, such that $u_n\geq 0$. Moreover, if 
\begin{equation}\label{uniq01}
\displaystyle k\, \Vert (u_n,{\boldsymbol \sigma}_n)
\Vert_1^4 \quad \hbox{is small enough,}
\end{equation}
 then the solution of  the scheme \textbf{US} is unique.
\end{tma}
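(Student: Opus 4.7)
My plan has three pieces: existence via Leray--Schauder on a truncated scheme, positivity via a sign test, and conditional uniqueness via a difference estimate.

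\textbf{Existence.} For each $\lambda\in[0,1]$, I define a map $R_\lambda:L^4(\Omega)\times \L^4(\Omega)\to L^4(\Omega)\times \L^4(\Omega)$ sending $(\hat u,\hat{\boldsymbol\sigma})$ to the solution $(u,\boldsymbol\sigma)\in H^1(\Omega)\times \H^{1}_{\sigma}(\Omega)$ of the \emph{decoupled} linear elliptic problems obtained from (\ref{modelf02}) by freezing the nonlinearities at $\lambda\,\hat u^+\hat{\boldsymbol\sigma}$ and $\lambda\,\hat u^+\nabla\hat u$ respectively. Each component is coercive (Lax--Milgram), and the compact embedding $H^1\hookrightarrow L^4$ (valid in $2$D and $3$D) makes $R_\lambda$ compact and continuous. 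For any fixed point of $R_\lambda$, testing the first equation by $u$ and the second by $\boldsymbol\sigma/2$, the cross terms $\lambda(u^+\boldsymbol\sigma,\nabla u)$ and $\lambda(u^+\nabla u,\boldsymbol\sigma)$ agree (since $u^+\nabla u=u^+\nabla u^+$ because $u^+\nabla u^-=0$) and cancel, producing
\[
\tfrac{1}{2}\Vert u\Vert_0^2+\tfrac{1}{4}\Vert \boldsymbol\sigma\Vert_0^2+k\Vert \nabla u\Vert_0^2+\tfrac{k}{2}\Vert \boldsymbol\sigma\Vert_1^2\leq \tfrac{1}{2}\Vert u_{n-1}\Vert_0^2+\tfrac{1}{4}\Vert \boldsymbol\sigma_{n-1}\Vert_0^2,
\]
uniformly in $\lambda$. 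Leray--Schauder then delivers a fixed point of $R_1$, which is a solution of the truncated scheme.

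\textbf{Positivity.} I test the truncated first equation by $-u_n^-$: the convection term $(u_n^+\boldsymbol\sigma_n,-\nabla u_n^-)$ vanishes because $u_n^+\,u_n^-\equiv 0$ forces $u_n^+\nabla u_n^-\equiv 0$; the diffusion contributes $\Vert \nabla u_n^-\Vert_0^2\geq 0$; and since $u_{n-1}\geq 0$ the time term satisfies $(\delta_t u_n,-u_n^-)\geq \Vert u_n^-\Vert_0^2/k$. Hence $u_n^-\equiv 0$, so $u_n^+=u_n$ and the truncation is inactive: the fixed point is a genuine solution of scheme \textbf{US} with $u_n\geq 0$.

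\textbf{Uniqueness.} Given two solutions $(u_n^i,\boldsymbol\sigma_n^i)$, $i=1,2$, set $U:=u_n^1-u_n^2$ and $\Sigma:=\boldsymbol\sigma_n^1-\boldsymbol\sigma_n^2$. Using the splittings $u_n^1\boldsymbol\sigma_n^1-u_n^2\boldsymbol\sigma_n^2=u_n^1\Sigma+U\boldsymbol\sigma_n^2$ and $u_n^1\nabla u_n^1-u_n^2\nabla u_n^2=u_n^1\nabla U+U\nabla u_n^2$, I test the difference equations by $U$ and $\Sigma/2$ and sum. The pair $(u_n^1\Sigma,\nabla U)$ and $-(u_n^1\nabla U,\Sigma)$ cancels, leaving
\[
\tfrac{1}{k}\Vert U\Vert_0^2+\tfrac{1}{2k}\Vert \Sigma\Vert_0^2+\Vert \nabla U\Vert_0^2+\tfrac{1}{2}\Vert \Sigma\Vert_1^2 = -(U\boldsymbol\sigma_n^2,\nabla U)+(U\nabla u_n^2,\Sigma).
\]
Mass conservation (\ref{consu}) gives $\int_\Omega U=0$, so by (\ref{H1poin}), $\Vert U\Vert_1$ is equivalent to $\Vert \nabla U\Vert_0$. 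Applying H\"older with the $3$D interpolation (\ref{in3D}) (the worst case) and Young, each right-hand term is majorised by $\varepsilon\bigl(\Vert \nabla U\Vert_0^2+\Vert \Sigma\Vert_1^2\bigr) + C_\varepsilon\bigl(\Vert u_n^2\Vert_1^4+\Vert \boldsymbol\sigma_n^2\Vert_1^4\bigr)\Vert U\Vert_0^2$. Absorbing the $\varepsilon$-terms and multiplying through by $k$, the smallness condition (\ref{uniq01}) forces $U=0$ and hence $\Sigma=0$. The main obstacle is calibrating the truncation so that both the energy identity and the positivity test succeed simultaneously: truncating both nonlinear terms by the same $u^+$ and exploiting the identity $u^+\nabla u=u^+\nabla u^+$ is the key technical trick; everything else reduces to standard H\"older/Sobolev manipulations.
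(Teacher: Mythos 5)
Your proposal follows essentially the same route as the paper's: Leray--Schauder applied to a truncated auxiliary problem, positivity by testing the first equation with the negative part of $u_n$, and uniqueness via the same difference-energy estimate with the cancellation of $(u_n^1\Sigma,\nabla U)$, the interpolation inequality (\ref{in3D}), the zero-mean Poincar\'e equivalence from (\ref{consu}) and (\ref{H1poin}), and the same smallness condition (\ref{uniq01}).

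Two remarks on the existence part. First, the paper truncates only the chemotaxis term, replacing it by $((u_n)_+{\boldsymbol\sigma}_n,\nabla\bar u)$ while leaving the production term $2(u_n\nabla u_n,\bar{\boldsymbol\sigma})$ untouched; it must therefore prove positivity of every fixed point of $\lambda R$ \emph{before} deriving the a priori bound, so that $u=u_+$ makes the cross terms cancel. Your symmetric truncation of both nonlinearities makes that cancellation automatic for any fixed point regardless of sign, which is a harmless (arguably tidier) variant of the same idea. Second --- and this is the one place where your setup as literally written would fail --- you freeze the production nonlinearity at $\lambda\,\hat u^+\nabla\hat u$ while taking the fixed-point space to be $L^4(\Omega)\times\mathbf{L}^4(\Omega)$, where $\nabla\hat u$ is not defined. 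The paper sidesteps this by writing the frozen production term in divergence form, $-(\widetilde u^2,\nabla\cdot\bar{\boldsymbol\sigma})$, which only requires $\widetilde u\in L^4(\Omega)$; in your truncated version the corresponding fix is $-\frac12((\hat u^+)^2,\nabla\cdot\bar{\boldsymbol\sigma})$, using $u^+\nabla u^+=\frac12\nabla((u^+)^2)$ and the boundary condition $\bar{\boldsymbol\sigma}\cdot\mathbf{n}=0$. With that adjustment (and the routine verification of continuity and compactness of $R_\lambda$, which you only assert), the argument goes through and coincides in substance with the paper's proof.
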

\begin{proof}
Let $(u_{n-1},{\boldsymbol \sigma}_{n-1})\in H^1(\Omega)\times \H^1_{\sigma}(\Omega)$ be given, with $u_{n-1}\geq 0$. The proof is divided into two parts.\\
\underline{\emph{Part 1:}} Existence of $(u_n,{\boldsymbol\sigma}_n)\in  H^{1}(\Omega)\times \H^{1}_{\sigma}(\Omega)$ solution of the  scheme \textbf{US}, such that $u_n\geq 0$. 

We consider the following auxiliary problem:
\begin{equation}
\left\{
\begin{array}
[c]{lll}%
(\delta_t u_n,\overline{u}) + (\nabla u_n, \nabla \overline{u}) +((u_n)_+{\boldsymbol \sigma}_n,\nabla \overline{u})=0, \ \ \forall \overline{u}\in H^{1}(\Omega),\\
(\delta_t {\boldsymbol \sigma}_n,\overline{\boldsymbol \sigma}) +
\langle B {\boldsymbol \sigma}_n, \overline{\boldsymbol
\sigma}\rangle  -
2(u_n\nabla u_n,\overline{\boldsymbol \sigma})=0,\ \ \forall
\overline{\boldsymbol \sigma}\in \H^{1}_{\sigma}(\Omega),
\end{array}
\right.  \label{modelf02mod}
\end{equation}
where $(u_n)_+=\max\{u_n,0\}$. In fact, it is the same scheme \textbf{US} but changing $u_n$ by $(u_n)_+$ in the chemotaxis term. 
\\
\textbf{A. Positivity of $u_n$:} First, we will see that if $(u_n,{\boldsymbol\sigma}_n)$ is a  solution of (\ref{modelf02mod}), then $u_n\geq 0$. Taking  $\overline{u}=(u_n)_{-}=\min\{u_n,0\}\leq 0$ in (\ref{modelf02mod}), and using that $((u_n)_+{\boldsymbol \sigma}_n,\nabla((u_n)_-))=0$,
we obtain 
\begin{equation*}\label{posu1}
\displaystyle\frac{1}{k} \Vert(u_n)_{-}\Vert_0^2 -
\frac{1}{k}\int_\Omega u_{n-1} (u_n)_{-} + 
\Vert\nabla((u_n)_{-}) \Vert_0^2 = 0.
\end{equation*}
Then, using the fact that $u_{n-1}\geq0$, one has that 
$\Vert(u_n) _{-}\Vert_{1}^2 \leq 0$, and thus  $u_n\geq 0$ in
$\Omega$. \\
\textbf{B. Existence of solution of (\ref{modelf02mod}):} It can be proved by using the Leray-Schauder fixed-point theorem (see Appendix A). \\
Then, from parts A and B, we conclude that there exists $(u_n,{\boldsymbol \sigma}_n)$ solution of (\ref{modelf02mod}) with $u_n\geq 0$. In particular, taking into account that $u_n=(u_n)_+$, we conclude that  $(u_n,{\boldsymbol \sigma}_n)$ is also a solution of  the scheme \textbf{US}, with $u_n\geq 0$.\\

\underline{\emph{Part 2:}} Uniqueness of solution $(u_{n},{\boldsymbol
\sigma}_{n})$ of the scheme \textbf{US}.

Let  $(u_n^1,{\boldsymbol
\sigma}_n^1),(u_n^2,{\boldsymbol \sigma}_n^2)\in H^1(\Omega)\times  \H^{1}_{\sigma}(\Omega)$ be two possible solutions of (\ref{modelf02}).
Then, defining $u_n=u_n^1-u_n^2$ and ${\boldsymbol
\sigma}_n={\boldsymbol \sigma}_n^1-{\boldsymbol \sigma}_n^2$, one has that $(u_n,{\boldsymbol \sigma}_n)\in H^1(\Omega)\times  \H^{1}_{\sigma}(\Omega)$ satisfies
\begin{eqnarray}\label{uniq001}
\displaystyle\frac{1}{k}(u_n,\overline{u})+ (\nabla u_{n},
\nabla \overline{u}) +(u_{n}^1{\boldsymbol \sigma}_n,\nabla \overline{u})
+(u_{n}{\boldsymbol \sigma}_n^2,\nabla \overline{u})=0, \quad \forall\,
\overline{u}\in H^1(\Omega),
\end{eqnarray}
\vspace{-1 cm}
\begin{eqnarray}\label{uniq002}
\displaystyle\frac{1}{k}({\boldsymbol \sigma}_n,\overline{\boldsymbol
\sigma})+ \langle B {\boldsymbol
\sigma}_n,\overline{\boldsymbol \sigma}\rangle 
- 2(u_{n}^1\nabla u_n,\overline{\boldsymbol \sigma})
- 2(u_{n}\nabla u_n^2,\overline{\boldsymbol \sigma})=0,\quad \forall\, \overline{\boldsymbol
\sigma}\in \H^{1}_{\sigma}(\Omega).
\end{eqnarray}
Taking $\overline{u}=u_n$, $\overline{\boldsymbol \sigma}=\displaystyle
\frac{1}{2}{\boldsymbol \sigma}_n$ in (\ref{uniq001})-(\ref{uniq002}) and adding the resulting expressions, the terms $(u_{n}^1{\boldsymbol \sigma}_n,\nabla u_n)$  cancel, and using the fact that $\displaystyle\int_\Omega u_n=0$,  one obtains
\begin{eqnarray*}
&\displaystyle\frac{1}{2k}&\!\!\!\!\Vert (u_n,{\boldsymbol
\sigma}_n)\Vert_{0}^2+ \frac{1}{2}\Vert (u_n,{\boldsymbol \sigma}_n)\Vert_{1}^2 
\leq \Vert u_{n}\Vert_{L^3} 
\Big(
\Vert {\boldsymbol \sigma}_n^2\Vert_{L^6} \Vert \nabla u_n\Vert_0 + 
\Vert \nabla u_n^2\Vert_0 \Vert {\boldsymbol \sigma}_n\Vert_{L^6} \Big) 
\\
&&\leq C\Vert u_{n}\Vert_{0}^{1/2} \Vert u_{n}\Vert_{1}^{1/2} \Big(
 \Vert {\boldsymbol \sigma}_n^2\Vert_{1} \Vert u_n\Vert_1
+  \Vert  u_n^2\Vert_1 \Vert {\boldsymbol \sigma}_n\Vert_{1} \Big)
\leq \displaystyle\frac{1}{4}\Vert (u_n,{\boldsymbol \sigma}_n)\Vert_{1}^2 
+ C\Vert u_{n}\Vert_{0}^2 \, \Vert (u_n^2,{\boldsymbol \sigma}_n^2) \Vert_1^4.
\end{eqnarray*}
Therefore, assuming 
hypothesis \eqref{uniq01}, one has $\Vert (u_n,{\boldsymbol \sigma}_n)\Vert_{1}=0$, thus $u_n^1=u_n^2$ and ${\boldsymbol\sigma}_n^1={\boldsymbol\sigma}_n^2$.
\end{proof}
\begin{obs}
In the case of $2D$ domains, using estimate (\ref{strong01}) (see Theorem \ref{stlemne} below), the uniqueness restriction (\ref{uniq01}) can be relaxed to $k\, K_0^2$ small enough,
 where $K_0$ is the constant appearing in (\ref{strong01})  depending on data $(\Omega,u_0,{\boldsymbol\sigma}_0)$, but it is independent of $k$ and $n$.
\end{obs}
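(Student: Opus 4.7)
The plan is to revisit Part 2 of the proof of Theorem \ref{USus} and show that, in the 2D setting, the numerical quantity $\Vert\nabla u_n^2\Vert_0^4 + \Vert{\boldsymbol\sigma}_n^2\Vert_1^4$ that appears inside the smallness hypothesis can be bounded by a constant $K_0$ depending only on the data $(\Omega, u_0, {\boldsymbol\sigma}_0)$ by invoking the uniform-in-$n$ strong estimate \eqref{strong01} of Theorem \ref{stlemne}. Once this a priori bound is in place, the condition \eqref{uniq01}, which in Part 2 took the form \textit{``$k(\Vert\nabla u_n^2\Vert_0^4 + \Vert{\boldsymbol\sigma}_n^2\Vert_1^4)$ small enough''}, collapses into the uniform and discretely intrinsic requirement ``$k\,K_0^2$ small enough''.

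Concretely, I would proceed as follows. First, I would recall the final inequality obtained in Part 2 of the proof of Theorem \ref{USus}, namely
\begin{equation*}
\frac{1}{2}\Vert(u_n,{\boldsymbol\sigma}_n)\Vert_0^2 + \frac{k}{4}\Vert(u_n,{\boldsymbol\sigma}_n)\Vert_1^2 \le C\,k\bigl(\Vert\nabla u_n^2\Vert_0^4 + \Vert{\boldsymbol\sigma}_n^2\Vert_1^4\bigr)\Vert(u_n,{\boldsymbol\sigma}_n)\Vert_0^2,
\end{equation*}
which must be made nonpositive on the right to force $u_n^1 = u_n^2$ and ${\boldsymbol\sigma}_n^1 = {\boldsymbol\sigma}_n^2$. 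Second, I would cite \eqref{strong01}, which (as stated in Theorem \ref{stlemne}) provides a constant $K_0 = K_0(\Omega, u_0, {\boldsymbol\sigma}_0)$, independent of $n$ and of $k$ (at least for $k\le k_0$, using Corollary \ref{CorUnif}), controlling the strong norm of any solution produced by the scheme; explicitly, $\Vert(u_n,{\boldsymbol\sigma}_n)\Vert_1^2 \le K_0$ for every $n\ge 0$. Third, I would apply this bound to each of the two admissible solutions $(u_n^i,{\boldsymbol\sigma}_n^i)$ ($i=1,2$) to obtain
\begin{equation*}
\Vert\nabla u_n^2\Vert_0^4 + \Vert{\boldsymbol\sigma}_n^2\Vert_1^4 \le C\,K_0^2,
\end{equation*}
so that the above inequality becomes
\begin{equation*}
\frac{1}{2}\Vert(u_n,{\boldsymbol\sigma}_n)\Vert_0^2 + \frac{k}{4}\Vert(u_n,{\boldsymbol\sigma}_n)\Vert_1^2 \le C\,k\,K_0^2\,\Vert(u_n,{\boldsymbol\sigma}_n)\Vert_0^2.
\end{equation*}
Smallness of $k\,K_0^2$ then absorbs the right-hand side and yields $\Vert(u_n,{\boldsymbol\sigma}_n)\Vert_0 = 0$, proving uniqueness.

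The hard part of this remark is not in the relaxation argument itself, which is essentially a direct substitution into the inequality already obtained in Theorem \ref{USus}. The real substance lies in Theorem \ref{stlemne}, where one must establish the 2D uniform strong estimate \eqref{strong01}. That is the discrete analog of the continuous bound \eqref{def-strongA} proved in Section 3.4 via $(\nabla(\ref{modelf00})_1,\nabla u) + \frac{1}{2}(\Delta(\ref{modelf00})_2,\Delta v)$, using the 2D interpolation inequality \eqref{in2D} and the uniform Gronwall Lemma \ref{GL}; the discrete version will require its discrete counterpart (Lemma \ref{LGD001} together with Corollary \ref{CorUnif}) and a careful treatment of the coupled trilinear terms in \eqref{modelf02}. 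Granting \eqref{strong01}, the remark follows by the three-step substitution outlined above, and the uniqueness threshold becomes a condition that depends only on fixed data and not on the particular solution at step $n$.
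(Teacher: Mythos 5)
Your proposal is correct and follows exactly the argument the paper intends for this remark: substitute the uniform 2D bound $\Vert(u_n,{\boldsymbol\sigma}_n)\Vert_1^2\le K_0$ from (\ref{strong01}) (Theorem \ref{stlemne}) into the final inequality of Part 2 of the proof of Theorem \ref{USus}, so that the solution-dependent smallness condition (\ref{uniq01}) becomes the data-dependent condition ``$k\,K_0^2$ small enough''. Your parenthetical caveat that the uniformity in $k$ of $K_0$ rests on Corollary \ref{CorUnif} (hence on fixing $k\le k_0$) is an accurate reading of the proof of Theorem \ref{stlemne}.
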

\subsubsection{Energy-Stability and uniform weak estimates} 
\begin{defi}\label{enesf}
A numerical scheme with solution $(u_n,{\boldsymbol \sigma}_n)$ is called energy-stable with respect to the  energy $\mathcal{E}(u,{\boldsymbol \sigma})$ given in  (\ref{ener01}) if this energy is time decreasing, that is, 
\begin{equation}\label{stabf02}
\mathcal{E}(u_n,{\boldsymbol \sigma}_n)\leq \mathcal{E}(u_{n-1},{\boldsymbol
\sigma}_{n-1}), \ \ \forall n.
\end{equation}
\end{defi}
 
\begin{lem} {\bf (Unconditional energy-stability)} \label{estinc1}
 The scheme \textbf{US} is unconditionally energy-stable with respect to $\mathcal{E}(u,{\boldsymbol \sigma})$. In fact, for any $(u_n,{\boldsymbol
\sigma}_n)$  solution of scheme \textbf{US}, the following discrete energy law holds
\begin{eqnarray}\label{lawenerfydisce}
&\delta_t \mathcal{E}(u_n,{\boldsymbol \sigma}_n)&\!\!\!\!\!+ 
\frac{k}{2} 
\Vert \delta_t u_n\Vert_{0}^2 +
\frac{k}{4} \Vert \delta_t {\boldsymbol \sigma}_n\Vert_{0}^2 
 + \Vert \nabla u_n\Vert_{0}^{2} +
\displaystyle\frac{1}{2}\Vert  {\boldsymbol
\sigma}_n\Vert_{1}^{2}=0.
\end{eqnarray}
\end{lem}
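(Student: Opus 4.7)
The plan is to prove the discrete energy law directly, from which the stability inequality \eqref{stabf02} follows immediately since all the dissipative terms on the left-hand side are nonnegative. The natural test functions mirror the choices used for the continuous energy law: take $\bar u = u_n$ in \eqref{modelf02}$_1$ and $\bar{\boldsymbol\sigma} = \tfrac{1}{2}{\boldsymbol\sigma}_n$ in \eqref{modelf02}$_2$, and add the two resulting identities.

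First, I would handle the time-derivative terms via the standard algebraic identity $a(a-b) = \tfrac{1}{2}(a^2-b^2) + \tfrac{1}{2}(a-b)^2$, which gives
\begin{equation*}
(\delta_t u_n, u_n) = \delta_t\!\left(\tfrac{1}{2}\Vert u_n\Vert_0^2\right) + \tfrac{k}{2}\Vert \delta_t u_n\Vert_0^2,
\end{equation*}
and similarly
\begin{equation*}
\tfrac{1}{2}(\delta_t {\boldsymbol\sigma}_n, {\boldsymbol\sigma}_n) = \delta_t\!\left(\tfrac{1}{4}\Vert {\boldsymbol\sigma}_n\Vert_0^2\right) + \tfrac{k}{4}\Vert \delta_t {\boldsymbol\sigma}_n\Vert_0^2.
\end{equation*}
These combine to produce $\delta_t \mathcal{E}(u_n,{\boldsymbol\sigma}_n)$ plus the two numerical dissipation terms appearing in \eqref{lawenerfydisce}.

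Next, the diffusive contributions become $(\nabla u_n,\nabla u_n) = \Vert \nabla u_n\Vert_0^2$ and $\tfrac{1}{2}\langle B{\boldsymbol\sigma}_n,{\boldsymbol\sigma}_n\rangle = \tfrac{1}{2}\Vert {\boldsymbol\sigma}_n\Vert_1^2$, where in the last equality I use the definition of $B$ together with the equivalent $\H^1_\sigma$-norm \eqref{H1div}. The decisive step is the cancellation of the chemotaxis/production couplings: the cross term from the first equation is $(u_n {\boldsymbol\sigma}_n, \nabla u_n)$, while the second equation contributes $-2(u_n\nabla u_n, \tfrac{1}{2}{\boldsymbol\sigma}_n) = -(u_n\nabla u_n,{\boldsymbol\sigma}_n)$, and the two are exactly opposite. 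This is precisely why the reformulation in the variable ${\boldsymbol\sigma}$ is advantageous: at the discrete level both nonlinear terms are treated implicitly in a symmetric way, so no time-step restriction is needed.

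I do not expect any real obstacle here since everything is purely algebraic once the correct test functions are chosen; the only subtlety is to check that $\tfrac{1}{2}{\boldsymbol\sigma}_n \in \H^1_\sigma(\Omega)$ is an admissible test function (which is immediate) and that the norm equivalence \eqref{H1div} applies to convert $\langle B{\boldsymbol\sigma}_n,{\boldsymbol\sigma}_n\rangle$ into $\Vert {\boldsymbol\sigma}_n\Vert_1^2$. Assembling the four contributions yields \eqref{lawenerfydisce}, and dropping the nonnegative terms $\tfrac{k}{2}\Vert \delta_t u_n\Vert_0^2$, $\tfrac{k}{4}\Vert \delta_t {\boldsymbol\sigma}_n\Vert_0^2$, $\Vert \nabla u_n\Vert_0^2$ and $\tfrac{1}{2}\Vert {\boldsymbol\sigma}_n\Vert_1^2$ gives $\delta_t \mathcal{E}(u_n,{\boldsymbol\sigma}_n)\le 0$, i.e.~\eqref{stabf02}.
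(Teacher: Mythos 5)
Your proof is correct and follows exactly the paper's argument: test \eqref{modelf02}$_1$ with $u_n$ and \eqref{modelf02}$_2$ with $\tfrac{1}{2}{\boldsymbol\sigma}_n$, use $a(a-b)=\tfrac12(a^2-b^2)+\tfrac12(a-b)^2$ for the discrete time derivatives, invoke \eqref{H1div} for the $B$-term, and observe the exact cancellation of the chemotaxis and production terms. The paper states this in one line; you have simply written out the same computation in full detail.
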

\begin{proof}
Taking $\overline{u}=u_n$ in (\ref{modelf02})$_1$ and $\overline{\boldsymbol \sigma}=\frac{1}{2}{\boldsymbol \sigma}_n$ in (\ref{modelf02})$_2$ and adding the resulting expressions, the chemotaxis and production terms cancel, obtaining
(\ref{lawenerfydisce}).
\end{proof}

\begin{obs}
Comparing the energy law  \eqref{weak}  and the discrete version  (\ref{lawenerfydisce}), we can say that the scheme \textbf{US} introduces the following two first order ``numerical  dissipation terms'': 
$$\frac{k}{2} 
\Vert \delta_t u_n\Vert_{0}^2\quad \hbox{and}  \quad \frac{k}{4} \Vert \delta_t {\boldsymbol \sigma}_n\Vert_{0}^2.
$$
\end{obs}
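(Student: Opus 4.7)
The plan is to test the two equations of the scheme $\textbf{US}$ with carefully chosen functions and use a standard discrete energy identity to reproduce exactly the terms appearing in (\ref{lawenerfydisce}). Concretely, I would take $\bar{u}=u_n$ in $(\ref{modelf02})_1$ and $\bar{\boldsymbol{\sigma}}=\tfrac12{\boldsymbol\sigma}_n$ in $(\ref{modelf02})_2$, then add both identities.

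For the time-derivative terms, I will use the elementary (and already invoked in the proof of Lemma \ref{eqLEM}) discrete identity
\begin{equation*}
(\delta_t w_n,w_n)=\delta_t\!\left(\tfrac12\Vert w_n\Vert_0^2\right)+\tfrac{k}{2}\Vert \delta_t w_n\Vert_0^2,
\end{equation*}
applied first with $w_n=u_n$ (from the first equation) and then with $w_n={\boldsymbol\sigma}_n$, scaled by $\tfrac12$ (from the second equation). This produces exactly the terms $\delta_t \mathcal{E}(u_n,{\boldsymbol\sigma}_n)$ together with the two numerical dissipation terms $\tfrac{k}{2}\Vert\delta_t u_n\Vert_0^2$ and $\tfrac{k}{4}\Vert\delta_t {\boldsymbol\sigma}_n\Vert_0^2$, since $\mathcal E$ is defined by (\ref{ener01}) with the coefficients $\tfrac12$ and $\tfrac14$.

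The diffusive terms are handled directly: $(\nabla u_n,\nabla u_n)=\Vert \nabla u_n\Vert_0^2$ from the first equation, and the second equation contributes $\tfrac12\langle B{\boldsymbol\sigma}_n,{\boldsymbol\sigma}_n\rangle$, which by the variational definition of $B$ (see (\ref{B201})) and the equivalent norm (\ref{H1div}) equals $\tfrac12 \Vert {\boldsymbol\sigma}_n\Vert_1^2$. This yields the two dissipative contributions on the left-hand side of (\ref{lawenerfydisce}).

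The only point that is not purely mechanical — and which I regard as the key step — is the cancellation of the chemotaxis and the quadratic production terms. After testing, the first equation contributes $+(u_n{\boldsymbol\sigma}_n,\nabla u_n)$, while the second, tested by $\tfrac12{\boldsymbol\sigma}_n$, contributes $-(u_n\nabla u_n,{\boldsymbol\sigma}_n)$. These two integrals are literally the same quantity written with a sign change, so they cancel exactly; this cancellation is precisely the algebraic reason why the reformulation in the variable ${\boldsymbol\sigma}=\nabla v$ (with the factor $2$ in front of $u\nabla u$, matched against $\tfrac14\Vert{\boldsymbol\sigma}\Vert_0^2$ in the energy) produces an unconditionally stable scheme, whereas working with $v$ directly would not give such a clean identity. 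Collecting all terms yields (\ref{lawenerfydisce}); the inequality (\ref{stabf02}) is then an immediate consequence after dropping the nonnegative dissipation terms.
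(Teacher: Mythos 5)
Your proposal is correct and follows exactly the paper's own argument for Lemma \ref{estinc1}: test $(\ref{modelf02})_1$ with $\bar{u}=u_n$ and $(\ref{modelf02})_2$ with $\bar{\boldsymbol\sigma}=\tfrac12{\boldsymbol\sigma}_n$, use the discrete identity $(\delta_t w_n,w_n)=\delta_t(\tfrac12\Vert w_n\Vert_0^2)+\tfrac{k}{2}\Vert\delta_t w_n\Vert_0^2$, and observe the exact cancellation of the chemotaxis and production terms. The identification of the two $O(k)$ remainders from that identity as the numerical dissipation terms is precisely the content of the remark.
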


\

Starting from the (local in time) discrete energy law (\ref{lawenerfydisce}),  global in time estimates 
of  the scheme \textbf{US} will be deduced. 
We will denote by   $C,C_i,K_i$ to different positive constants possibly  depending on the continuous data $(u_0, {\boldsymbol \sigma}_0,\Omega)$, but independent of time size $k$ and time step $n$.

\begin{tma} \label{welem} {\bf(Uniform weak estimates for $(u_n,{\boldsymbol \sigma}_n)$) }
Let $(u_n,{\boldsymbol
\sigma}_n)$ be a solution of  the scheme \textbf{US}. Then, the following estimates hold
\begin{equation}\label{weak01}
\Vert (u_n, {\boldsymbol \sigma}_n)\Vert_{0}^{2}
+ k \underset{m=1}{\overset{n}{\sum}}\Vert (\hat u_m, {\boldsymbol \sigma}_m)\Vert_{1}^2\leq K_0, \ \ \ \forall n\geq 1.
\end{equation}
\end{tma}
\begin{proof}
It suffices  adding (\ref{lawenerfydisce}) from $m=1,\cdots,n$ and bound the initial energy.
\end{proof}


\begin{cor} {\bf(Uniform strong estimates for $v_n$) }\label{CorvL1}
If $v_n=v_n(u_n^2)$ is the solution of (\ref{edovf}) and $\hat v_n=v_n-m_0^2$, then
\begin{equation}\label{weak01v}
\Vert v_n\Vert_{1}^{2} + 
 k \underset{m=1}{\overset{n}{\sum}}\Vert  \hat v_m \Vert_{2}^2 \leq K_1,  \ \ \ \forall n\geq 1.
\end{equation}
\end{cor}
\begin{proof}
Rewriting (\ref{edovf}) as 
$$
 \delta_t \hat v_n -\Delta \hat v_n + \hat v_n =
 (\hat u_n+2 \, m_0) \hat u_n
$$
and testing by $\hat v_n$ 
one has 
$$
 \delta_t  \Vert \hat v_n \Vert_{0}^2 +\Vert \hat  v_n \Vert_{1}^2  
 \le C \Vert   (\hat u_n+2 \, m_0) \Vert_{L^{3/2}}^2 \Vert  \hat u_n \Vert_{L^6}^2  
  \le C  \Vert  \hat u_n \Vert_1^2.
$$
By adding from $m=1,\cdots,n$ and using \eqref{weak01}, one has 
$$
\Vert 
 \hat v_n\Vert_{0}^{2} +  k \underset{m=1}{\overset{n}{\sum}}\Vert  \hat v_m \Vert_{1}^2 \leq K,  \ \ \ \forall n\geq 1.
$$
 Finally, since ${\boldsymbol\sigma}_n=\nabla v_n$, then (\ref{weak01v}) holds from (\ref{weak01}) and this last estimate. 
\end{proof}

\subsubsection{Convergence to weak solutions}
Starting from the previous stability estimates, following the ideas of \cite{Tem}  the convergence towards weak solutions as $k\rightarrow 0$ can be proved. For this, let us to introduce the functions:
\begin{itemize}
\item $(\widetilde{u}_{k},\widetilde{\boldsymbol\sigma}_{k})$ are continuous functions on $[0,+\infty)$, linear on each interval $(t_{n-1},t_n)$ and equal to $(u_n,{\boldsymbol\sigma}_n)$ at $t=t_n$, for all $n\ge 0$;
\item $({u}_{k},{\boldsymbol\sigma}_{k} )$ are the piecewise constant functions taking values $(u_{n},{\boldsymbol\sigma}_n)$ on $(t_{n-1},t_n]$, for all $n\ge 1$.
\end{itemize}
\begin{tma} {\bf (Convergence of  $(u,{\boldsymbol\sigma})$)}\label{TCon}
There exist a subsequence $(k')$ of $(k)$, with $k'\downarrow 0$, and a weak solution $(u,{\boldsymbol\sigma})$ of (\ref{modelf01}) in $(0,+\infty)$, such that $(\widetilde u_{k'}-m_0,\widetilde{\boldsymbol\sigma}_{k'})$ and $(u _{k'}-m_0,{\boldsymbol\sigma} _{k'})$ converge to $(u-m_0,{\boldsymbol\sigma})$ weakly-$\star$ in $L^\infty(0,+\infty;L^2(\Omega)\times \L^2(\Omega))$, weakly in $L^2(0,+\infty;H^1(\Omega)\times \H^1(\Omega))$ and strongly in $L^2(0,T;L^2(\Omega)\times \L^2(\Omega))$, for any $T>0$. 
\end{tma}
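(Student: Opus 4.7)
My plan is to follow the standard Aubin--Lions/compactness strategy for convergence of time-discretizations, as used in Temam's treatment of the Navier--Stokes equations and cited in the paper. The starting point is Theorem~\ref{welem} and Corollary~\ref{CorvL1}, which give, uniformly in $k$, that $(u_k^r,\boldsymbol\sigma_k^r)$ and $(\widetilde u_k,\widetilde{\boldsymbol\sigma}_k)$ lie in a bounded set of $L^\infty(0,+\infty;L^2\times\L^2)\cap L^2(0,T;H^1\times\H^1)$ for every $T>0$. From these weak and weak-$\star$ bounds, one extracts a subsequence $k'\downarrow 0$ and limits $(u,\boldsymbol\sigma)$ in the stated topologies. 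In particular, $u\ge 0$ a.e. since each $u_n\ge 0$ by Theorem~\ref{USus}.

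The next step, and the main technical point, is to establish uniform bounds on the discrete time derivatives $\partial_t\widetilde u_k=\delta_t u_n$ and $\partial_t\widetilde{\boldsymbol\sigma}_k=\delta_t\boldsymbol\sigma_n$ (on each $(t_{n-1},t_n)$) in $L^{q'}(0,T;H^1(\Omega)'\times\H^1_\sigma(\Omega)')$, with $q'$ as in Definition~\ref{ws00USV}. From (\ref{modelf02}) and the uniform estimate (\ref{weak01})--(\ref{weak02}), it suffices to control the nonlinear terms $(u_n\boldsymbol\sigma_n,\nabla\bar u)$ and $(u_n\nabla u_n,\bar{\boldsymbol\sigma})$ in the relevant dual norm, which is done by H\"older and the interpolation inequalities (\ref{in2D}) in 2D, (\ref{in3D}) in 3D; this is where the exponent $q'$ arises. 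Combining these bounds with the $L^2(0,T;H^1\times\H^1)$ bound, the Aubin--Lions compactness lemma yields strong convergence of $(\widetilde u_{k'},\widetilde{\boldsymbol\sigma}_{k'})$ in $L^2(0,T;L^2\times\L^2)$. For the piecewise constant approximations, the identity $\widetilde u_k(t)-u_k^r(t)=(t-t_n)\delta_t u_n$ on $(t_{n-1},t_n]$ and the $L^2$ bound on $\delta_t$ give $\|\widetilde u_k-u_k^r\|_{L^2(0,T;L^2)}\le Ck^{1/2}\to 0$, so $(u_{k'}^r,\boldsymbol\sigma_{k'}^r)$ shares the same strong limit.

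Once the limits $(u,\boldsymbol\sigma)$ are identified, I would rewrite the scheme (\ref{modelf02}) in terms of the interpolants: summing over $n$ multiplied by $k$ recasts it as a global variational identity involving $\widetilde u_k,\widetilde{\boldsymbol\sigma}_k,u_k^r,\boldsymbol\sigma_k^r$. The linear terms pass to the limit by weak convergence; the nonlinear terms $u_k^r\boldsymbol\sigma_k^r$ and $u_k^r\nabla u_k^r$ pass via the product of a strong $L^2(0,T;L^2)$ limit with a weak $L^2(0,T;L^2)$ or $L^2(0,T;H^1)$ limit, after taking, if needed, test functions $\bar u\in L^q(0,T;H^1)$ and $\bar{\boldsymbol\sigma}\in L^q(0,T;\H^1_\sigma)$ (density of smooth functions is used when necessary). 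This yields (\ref{wf01US})--(\ref{wf01bUS}). The initial conditions are recovered from the convergence of $\widetilde u_k(0)=u_0$ and $\widetilde{\boldsymbol\sigma}_k(0)=\boldsymbol\sigma_0$ together with the regularity $u,\boldsymbol\sigma\in C([0,T];H^1(\Omega)'\times\H^1_\sigma(\Omega)')$ following from the time-derivative bounds.

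The remaining point is the energy inequality (\ref{wsdUS}). I would start from the local discrete energy law (\ref{lawenerfydisce}) summed between two indices $n_0,n_1$ with $t_{n_0}\to t_0$, $t_{n_1}\to t_1$, which gives
\begin{equation*}
\mathcal E(u_{n_1},\boldsymbol\sigma_{n_1})-\mathcal E(u_{n_0},\boldsymbol\sigma_{n_0})+k\sum_{m=n_0+1}^{n_1}\Big(\|\nabla u_m\|_0^2+\tfrac12\|\boldsymbol\sigma_m\|_1^2\Big)\le 0,
\end{equation*}
where the positive numerical dissipation has been dropped. Passing to the $\liminf$ as $k'\to 0$, using weak lower semicontinuity of the norms on the left for a.e. $t_0,t_1$ (times where the interpolants converge pointwise in $L^2\times\L^2$, which holds outside a null set by the strong $L^2(0,T;L^2)$ convergence), one obtains (\ref{wsdUS}). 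The main obstacle I anticipate is the bookkeeping for the time-derivative estimates in 3D (with $q'=4/3$) and the choice of test spaces in the limit passage of the nonlinear terms; everything else is routine once these ingredients are in place.
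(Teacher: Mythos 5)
Your proposal is correct and follows essentially the same route as the paper's proof: uniform bounds from Theorem \ref{welem}, a dual-space bound on the discrete time derivatives, Aubin--Lions compactness, the $O(k^{1/2})$ comparison of the piecewise-linear and piecewise-constant interpolants, limit passage in the nonlinear terms via strong-times-weak convergence, and recovery of (\ref{wsdUS}) from the discrete energy law using a.e.-in-time convergence of the energies together with weak lower semicontinuity of the dissipation integral. The only (cosmetic) difference is in the energy inequality: the paper derives a differential energy inequality for the continuous interpolants and integrates it between \emph{fixed} times $t_0<t_1$, which avoids the extra bookkeeping your version needs to relate the $k$-dependent grid endpoints $t_{n_0},t_{n_1}$ to the fixed times at which the pointwise convergence of $\mathcal{E}$ is available.
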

\begin{proof}
Observe that (\ref{modelf02}) can be rewritten as:
\begin{equation}
\left\{
\begin{array}
[c]{lll}%
\vspace{0,2 cm}\displaystyle\Big(\frac{d}{dt}\widetilde{u}_k(t),\overline{u}\Big) + (\nabla u _k(t), \nabla \overline{u}) +(u_k (t){\boldsymbol \sigma}_k (t),\nabla \overline{u})=0, \ \ \forall \overline{u}=\overline{u}(t), \ \ \mbox{for } t\in [0,+\infty)\setminus\{t_n\},\\ 
\displaystyle\Big(\frac{d}{dt}\widetilde{\boldsymbol \sigma}_k(t),\overline{\boldsymbol \sigma}\Big) +
\langle B {\boldsymbol \sigma} _k(t), \overline{\boldsymbol \sigma}\rangle  -
2(u _k(t)\nabla u _k(t),\overline{\boldsymbol \sigma})=0,\ \ \forall
\overline{\boldsymbol \sigma}=\overline{\boldsymbol \sigma}(t),  \ \ \mbox{for } t\in [0,+\infty)\setminus\{t_n\},
\end{array}
\right.  \label{modelf02convA}
\end{equation}
where $\overline{u}(t)|_{I_n}=\overline{u}_n \in H^1(\Omega)$ and $\overline{\boldsymbol \sigma}(t)|_{I_n}=\overline{\boldsymbol \sigma}_n \in \H^1_\sigma(\Omega)$,  with $I_n:=[t_{n-1},t_n]$.  From Theorem \ref{welem} we have that
$(\widetilde{u}_{k},\widetilde{\boldsymbol\sigma}_{k})$ and $({u}_{k} ,{\boldsymbol\sigma}_{k} )$ are bounded in $L^\infty(0,+\infty; L^2(\Omega) \times \L^2(\Omega)) \cap L^2(0,T; H^1(\Omega) \times \H^1(\Omega))$. Moreover, using (\ref{lawenerfydisce}), it is not difficult to prove that $\widetilde{u}_{k} - {u}_{k} $ and $\widetilde{\boldsymbol\sigma}_{k} - {\boldsymbol\sigma}_{k} $ converge to $0$ in $L^2(0,T;L^2(\Omega))$ as $k\rightarrow 0$, for any $T>0$. More precisely, we have $\Vert\widetilde{u}_{k} - {u}_{k} ,\widetilde{\boldsymbol\sigma}_{k} - {\boldsymbol\sigma}_{k} \Vert_{L^2(0,T;L^2(\Omega)) }\le (C_0 k/3)^{1/2}$. Therefore, there exist a subsequence $(k')$ of $(k)$ and limit functions $u$ and ${\boldsymbol\sigma}$ verifying the following convergence as $k' \rightarrow 0$:
\begin{equation*}
(\widetilde{u}_{k'}-m_0,\widetilde{\boldsymbol\sigma}_{k'}) , \ \  ({u}_{k'}-m_0 ,{\boldsymbol\sigma}_{k'} )  \rightarrow (u-m_0,{\boldsymbol\sigma}) \ \ \mbox{ in }  \left\{\begin{array}{l}
L^\infty(0,+\infty; L^2(\Omega) \times \L^2(\Omega))\mbox{-weak*}\\
L^2(0,+\infty; H^1(\Omega) \times \H^1(\Omega))\mbox{-weak}.
\end{array}\right. 
\end{equation*}
On the other hand, one can deduce that $\frac{d}{dt}(\widetilde{u}_{k'},\widetilde{\boldsymbol\sigma}_{k'})$ is bounded in $L^{4/3}(0,T;H^1(\Omega)'\times \H^1(\Omega)')$. Therefore,  a compactness result of Aubin-Lions type implies that the sequence $(\widetilde{u}_{k'},\widetilde{\boldsymbol\sigma}_{k'})$ is compact in $L^2(0,T;L^2(\Omega)\times \L^2(\Omega))$ for any $T>0$. 
 This implies the strong convergence of both subsequences $(\widetilde{u}_{k},\widetilde{\boldsymbol\sigma}_{k})$ and $({u}_{k} ,{\boldsymbol\sigma}_{k} )$ in $L^2(0,T;L^2(\Omega)\times \L^2(\Omega))$; and passing to the limit in  (\ref{modelf02convA}), one obtains that $(u,{\boldsymbol\sigma})$ satisfies (\ref{wf01US})-(\ref{wf01bUS}). Now, in order to obtain that $(u,{\boldsymbol\sigma})$ satisfies the energy inequality (\ref{wsdUS}), we test (\ref{modelf02convA})$_1$ by $\overline{u}=u_k (t)$ and (\ref{modelf02convA})$_2$ by $\overline{\boldsymbol \sigma}=\frac12{\boldsymbol \sigma}_k (t)$, and taking into account that $\widetilde{u}_k |_{I_n}= u_n + \frac{t_n  - t}{k} (u_{n-1} - u_n)$ (and $\widetilde{\boldsymbol\sigma}_k$ is defined in the same way), one has 
\begin{equation*}
\frac{d}{dt}\left( \frac{1}{2}\Vert \widetilde{u}_k(t) \Vert_0^2 + \frac{1}{4} \Vert\widetilde{\boldsymbol \sigma}_k(t)\Vert_0^2\right) + (t_n - t)\Vert (\delta_t u_n ,\delta_t {\boldsymbol\sigma}_n) \Vert_0^2 +
  \Vert \nabla u_k (t)\Vert_{0}^{2} +
\displaystyle\frac{1}{2}\Vert {\boldsymbol \sigma}_k (t)\Vert_{1}^{2}= 0,
\end{equation*}
for any $t\in (t_{n-1},t_n)$, which implies that
\begin{equation}\label{lawenerfydisceCONV}
\frac{d}{dt}\mathcal{E}(\widetilde{u}_k(t),\widetilde{\boldsymbol \sigma}_k(t)) + 
  \Vert \nabla u_k (t)\Vert_{0}^{2} +
\displaystyle\frac{1}{2}\Vert {\boldsymbol \sigma}_k (t)\Vert_{1}^{2}\leq 0, \ \ \mbox{for } t\in [0,+\infty)\setminus\{t_n\}.
\end{equation}
Then, integrating (\ref{lawenerfydisceCONV}) in time from $t_0$ to $t_1$, with any $t_0<t_1\in [0,+\infty)$, and taking into account that 
$$
\int_{t_0}^{t_1} \displaystyle\frac{d}{dt}\mathcal{E}(\widetilde{u}_k(t),\widetilde{\boldsymbol \sigma}_k(t))= \mathcal{E}(\widetilde{u}_k(t_1),\widetilde{\boldsymbol \sigma}_k(t_1)) - \mathcal{E}(\widetilde{u}_k(t_0),\widetilde{\boldsymbol \sigma}_k(t_0)) \quad  \forall t_0<t_1,
$$
since $\mathcal{E}(\widetilde{u}_k(t),\widetilde{\boldsymbol \sigma}_k(t))$ is continuous in time, we deduce
\begin{equation}\label{lawenerfydisceCONVnewa}
\mathcal{E}(\widetilde{u}_k(t_1),\widetilde{\boldsymbol \sigma}_k(t_1)) - \mathcal{E}(\widetilde{u}_k(t_0),\widetilde{\boldsymbol \sigma}_k(t_0))  + \int_{t_0}^{t_1} (  \Vert \nabla u_k (t)\Vert_{0}^{2} +
\displaystyle\frac{1}{2}\Vert {\boldsymbol \sigma}_k (t)\Vert_{1}^{2}) dt \leq 0, \quad \forall t_0<t_1.
\end{equation}
Now, we will prove that 
\begin{equation}\label{FIN}
\mathcal{E}(\widetilde{u}_{k'}(t),\widetilde{\boldsymbol \sigma}_{k'}(t)) \rightarrow \mathcal{E}({u}(t),{\boldsymbol \sigma}(t)) \ \ \mbox{a.e.} \  t\in[0,+\infty).
\end{equation}
Indeed, for any $T>0$,
\begin{eqnarray}\label{cc00}
 & \Vert \mathcal{E}&\!\!\!\!\!( \widetilde{u}_{k'}(t),\widetilde{\boldsymbol \sigma}_{k'}(t))  - \mathcal{E}({u}(t),{\boldsymbol \sigma}(t)) \Vert_{L^1(0,T)} =\displaystyle\int_0^T  \vert \mathcal{E}(\widetilde{u}_{k'}(t),\widetilde{\boldsymbol \sigma}_{k'}(t)) - \mathcal{E}({u}(t),{\boldsymbol \sigma}(t)) \vert dt\nonumber\\
 && \!\!\!\!  =  \int_0^T \left\vert  \frac{1}{2}\left(\Vert \widetilde{u}_{k'}(t) \Vert_0^2- \Vert {u}(t) \Vert_0^2 \right) + \frac{1}{4} \left( \Vert\widetilde{\boldsymbol \sigma}_{k'}(t)\Vert_0^2 -  \Vert{\boldsymbol \sigma}(t)\Vert_0^2\right)  \right\vert dt\nonumber\\
 && \!\!\!\!  \leq \frac{1}{2}  \Vert \widetilde{u}_{k'}  -  {u} \Vert_{L^2(0,T;L^2)} 
 \Big(\Vert \widetilde{u}_{k'}  \Vert_{L^2(0,T;L^2)} + \Vert {u}\Vert_{L^2(0,T;L^2)} \Big)
 \nonumber\\
 && \!\!\!\! + \frac{1}{4}\Vert\widetilde{\boldsymbol \sigma}_{k'}- {\boldsymbol \sigma}\Vert_{L^2(0,T;L^2)}  
 \Big(\Vert\widetilde{\boldsymbol \sigma}_{k'}\Vert_{L^2(0,T;L^2)} + \Vert{\boldsymbol \sigma}\Vert_{L^2(0,T;L^2)} \Big).
\end{eqnarray}
Taking into account that $(\widetilde{u}_{k'},\widetilde{\boldsymbol \sigma}_{k'})\rightarrow (u,{\boldsymbol \sigma})$ strongly in $L^2(0,T;L^2(\Omega))$ for any $T>0$, from (\ref{cc00}) we conclude that $\mathcal{E}(\widetilde{u}_{k'}(t),\widetilde{\boldsymbol \sigma}_{k'}(t)) \rightarrow \mathcal{E}({u}(t),{\boldsymbol \sigma}(t))$ strongly in $L^1(0,T)$ for all $T>0$, which implies (\ref{FIN}). In particular,  owing to (\ref{FIN}), one has
\begin{eqnarray*}
\underset{k'\rightarrow 0}{\lim \mbox{inf} }\ \Big[\mathcal{E}(\widetilde{u}_{k'}(t_1),\widetilde{\boldsymbol \sigma}_{k'}(t_1)) - \mathcal{E}(\widetilde{u}_{k'}(t_0),\widetilde{\boldsymbol \sigma}_{k'}(t_0))\Big]  = \mathcal{E}({u}(t_1),{\boldsymbol \sigma}(t_1)) - \mathcal{E}({u}(t_0),{\boldsymbol \sigma}(t_0)),
\end{eqnarray*}
a.e.~$t_1,t_0: t_1\geq t_0\geq 0$. On the other hand, since $(u _{k'},{\boldsymbol\sigma} _{k'})\rightarrow (u,{\boldsymbol\sigma})$ weakly in $L^2(0,T;H^1(\Omega)\times \H^1(\Omega))$, one has 
$$
\underset{k'\rightarrow 0}{\lim \mbox{inf} } \int_{t_0}^{t_1} (  \Vert \nabla u_{k'} (t)\Vert_{0}^{2} +
\displaystyle\frac{1}{2}\Vert {\boldsymbol \sigma}_{k'} (t)\Vert_{1}^{2}) dt \ge \int_{t_0}^{t_1} (  \Vert \nabla u(t)\Vert_{0}^{2} +
\displaystyle\frac{1}{2}\Vert {\boldsymbol \sigma}(t)\Vert_{1}^{2}) dt \quad \ \forall t_1\geq t_0\geq 0.
$$
Finally, taking  $\liminf$ as $k'\rightarrow 0$ in the inequality (\ref{lawenerfydisceCONVnewa}),  the energy inequality (\ref{wsdUS})  is deduced.
\end{proof}

Analogously, if we introduce the functions:
\begin{itemize}
\item $\widetilde{v}_{k}$ are continuous functions on $[0,+\infty)$, linear on each interval $(t_{n-1},t_n)$ and equal to $v_n,$ at $t=t_n$, $n\ge 0$;
\item ${v}_{k} $ are the piecewise constant functions taking values $v_{n}$ on $(t_{n-1},t_n]$, $n\ge 1$,
\end{itemize}
the following result can be proved:
\begin{lem}{\bf (Convergence of  $v$)}
There exist a subsequence $(k')$ of $(k)$, with $k'\downarrow 0$, and a strong solution $v$ of (\ref{modelf01eqv}) in $(0,+\infty)$, such that $\widetilde{v}_{k'}-m_0^2$ and $v _{k'} -m_0^2$ converge to $v -m_0^2$ weakly-$\star$ in $L^\infty(0,+\infty;H^1(\Omega))$, weakly in $L^2(0,{\color{blue}+\infty};H^2(\Omega))$ and strongly in $L^2(0,T;H^1(\Omega))\cap C([0,T];L^p(\Omega))$, for $1\leq p <6$ and any $T>0$.
\end{lem}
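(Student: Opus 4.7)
The plan is to follow closely the structure of the proof of Theorem \ref{TCon}, now applied to the scalar equation (\ref{edovf}), using the uniform estimates from Corollary \ref{CorvL1}. First I would rewrite (\ref{edovf}) in time-continuous form as
\begin{equation*}
\frac{d}{dt}\widetilde{v}_k(t) + A v_k^r(t) = (u_k^r(t))^2, \quad t\in (0,+\infty)\setminus\{t_n\}_{n\geq 0},
\end{equation*}
and collect the a priori bounds: by Corollary \ref{CorvL1}, both $\{v_k^r\}$ and $\{\widetilde{v}_k\}$ are uniformly bounded in $L^{\infty}(0,+\infty;H^1(\Omega))\cap L^2(0,T;H^2(\Omega))$ for every $T>0$.

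Next I would control the time derivative $\frac{d}{dt}\widetilde{v}_k=\delta_t v_n$ by using the equation: the linear part $A v_n$ is bounded in $L^2(0,T;L^2(\Omega))$ by the $H^2$-estimate, so only $u_n^2$ is delicate. From Theorem \ref{welem}, $u_k^r$ is uniformly bounded in $L^{\infty}(0,+\infty;L^2(\Omega))\cap L^2(0,T;H^1(\Omega))$, and the Gagliardo--Nirenberg interpolation gives $u_k^r$ bounded in $L^{10/3}(0,T;L^{10/3}(\Omega))$ in 3D (respectively $L^4(0,T;L^4(\Omega))$ in 2D). Hence $(u_k^r)^2$ is bounded in $L^{5/3}(0,T;L^{5/3}(\Omega))$ (respectively $L^2(0,T;L^2(\Omega))$), and so is $\frac{d}{dt}\widetilde{v}_k$.

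With these ingredients in hand, I would invoke the Aubin--Lions--Simon compactness theorem with the chain $H^2(\Omega)\hookrightarrow\hookrightarrow H^1(\Omega)\hookrightarrow L^{5/3}(\Omega)$ to extract a subsequence $(k')$ such that $\widetilde{v}_{k'}\to v$ strongly in $L^2(0,T;H^1(\Omega))$, weakly in $L^2(0,T;H^2(\Omega))$ and weakly-$\star$ in $L^{\infty}(0,+\infty;H^1(\Omega))$. A computation as in Theorem \ref{TCon}, yielding $\Vert \widetilde{v}_k-v_k^r\Vert_{L^2(I_n;L^2)}^2\leq \frac{k^2}{3}\Vert \delta_t v_n\Vert_0^2$, combined with the time-derivative bound, shows that $\widetilde{v}_k-v_k^r\to 0$ strongly in $L^2(0,T;L^2(\Omega))$, so $v_k^r$ inherits the same weak/weak-$\star$/strong limits. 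Strong convergence of $\widetilde{v}_{k'}$ in $C([0,T];L^p(\Omega))$ for $1\leq p<6$ follows from Simon's compactness lemma, using the uniform bound of $\widetilde{v}_k$ in $L^\infty(0,+\infty;H^1(\Omega))$, the compact embedding $H^1(\Omega)\hookrightarrow\hookrightarrow L^p(\Omega)$, and the $L^{5/3}$-bound on $\frac{d}{dt}\widetilde{v}_k$ that provides uniform equicontinuity in time with values in $L^{5/3}(\Omega)$.

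Finally I would pass to the limit in the time-continuous reformulation. The linear term $Av_{k'}^r\rightharpoonup Av$ by weak $L^2(L^2)$-convergence. For the quadratic source, the strong convergence $u_{k'}^r\to u$ in $L^2(0,T;L^2(\Omega))$ from Theorem \ref{TCon}, together with the uniform $L^{10/3}(0,T;L^{10/3}(\Omega))$-bound, implies $(u_{k'}^r)^2\to u^2$ strongly in $L^{5/3}(0,T;L^{5/3}(\Omega))$. Consequently the limit $v$ solves $\partial_t v+Av=u^2$ pointwise a.e.\ in $(0,+\infty)\times\Omega$ with $v(0)=v_0$ (using the $C([0,T];L^p)$ convergence to pass the initial condition), and the higher parabolic regularity $v\in L^p(0,T;W^{2,p}(\Omega))\cap L^{q'}(0,T;H^2(\Omega))$ stated in Lemma \ref{eqREM} follows from standard maximal regularity applied to the right-hand side $u^2$. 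The main technical obstacle is bookkeeping the correct Lebesgue exponents for passing to the limit in the quadratic term in the 3D case, where the Hilbertian $L^2(L^2)$-framework is not sufficient.
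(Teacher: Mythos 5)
Your proposal follows essentially the same route as the paper's proof: the time-continuous rewriting of (\ref{edovf}), the uniform $L^\infty(0,+\infty;H^1(\Omega))\cap L^2(0,T;H^2(\Omega))$ bounds from Corollary \ref{CorvL1}, a bound on $\frac{d}{dt}\widetilde{v}_k$ read off from the equation, and Aubin--Lions/Simon compactness, the only cosmetic difference being that the paper bounds the time derivative in $L^{4/3}(0,T;L^2(\Omega))$ rather than your $L^{5/3}(0,T;L^{5/3}(\Omega))$ (either works). One minor overclaim: the uniform $L^{10/3}$ bound plus strong $L^2(L^2)$ convergence of $u_{k'}^r$ yields $(u_{k'}^r)^2\to u^2$ strongly only in $L^{s}$ for $s<5/3$ (or weakly in $L^{5/3}$), which is still sufficient to pass to the limit.
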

\begin{proof}
Observe that (\ref{edovf}) can be rewritten as:
\begin{equation}
\displaystyle\frac{d}{dt}\widetilde{v}_k(t) +  A  v _k(t) = (u _k(t))^2 \ \ \mbox{for } t\in [0,+\infty)\setminus\{t_n\}. \label{modelf02convAv}
\end{equation}
From estimate (\ref{weak01v}), 
$\widetilde{v}_{k}$ and ${v}_{k} $ are bounded in $L^\infty(0,+\infty; H^1(\Omega)) \cap L^2(0,T; H^2(\Omega))$. Moreover, it is not difficult to prove that $\widetilde{v}_{k} - {v}_{k} \to 0$ in $L^2(0,T;H^1(\Omega))$ as $k\rightarrow 0$, for any $T>0$. Therefore, there exist a subsequence $(k')$ of $(k)$ and a limit function $v$ satisfying the following convergence as $k' \rightarrow 0$:
\begin{equation*}
\widetilde{v}_{k'} -m_0^2, \ \  {v}_{k'}-m_0^2  \rightarrow v -m_0^2 \ \ \mbox{ in }  \left\{\begin{array}{l}
L^\infty(0,+\infty; H^1(\Omega))\mbox{-weak*}\\
L^2(0,+\infty; H^2(\Omega))\mbox{-weak}.
\end{array}\right. 
\end{equation*}
On the other hand, one can deduce that $\frac{d}{dt}\widetilde{v}_{k'}$ is bounded in $L^{4/3}(0,T;L^2(\Omega))$. Therefore,  a compactness result of Aubin-Lions type implies that $\widetilde{v}_{k'} \rightarrow v$ in $L^2(0,T;H^1(\Omega))\cap C([0,T];L^p(\Omega))$, for $1\leq p <6$. 
 As before, this  strong convergence is satisfied by both subsequences $\widetilde{v}_{k}$ and ${v}_{k} $.
 Finally,  passing to the limit in  (\ref{modelf02convAv}), we obtain that $v$ satisfies (\ref{wf02}).
\end{proof}
\subsection{Uniform strong estimates}\label{UES}
In this section, global in time  estimates in strong and more regular  norms for $(u_n,{\boldsymbol \sigma}_{n})$ any solution of the scheme \textbf{US} will be obtained. 
We will again denote by   $C,C_i,K_i$ to different positive constants possibly  depending on the continuous data $(u_0, {\boldsymbol \sigma}_0,\Omega)$, but independent of time size $k$ and time step $n$.

 First, we are going to show $H^2$-regularity for $(u_n,{\boldsymbol \sigma}_{n})$.
\begin{prop} \label{regUS}
Let $(u_{n-1},{\boldsymbol \sigma}_{n-1})\in H^{1}(\Omega)\times \H^{1}_{\sigma}(\Omega)$. If $(u_n,{\boldsymbol \sigma}_{n})\in H^{1}(\Omega)\times \H^{1}_{\sigma}(\Omega)$ is any solution of the scheme \textbf{US}, then $(u_n,{\boldsymbol \sigma}_{n})\in H^{2}(\Omega)\times \H^{2}(\Omega)$. 
Moreover, the following estimate holds
\begin{equation}\label{H2est1}
\Vert (\hat u_n,{\boldsymbol\sigma}_n)\Vert_{2}
\leq C\left( \Vert (\delta_t \hat u_n, \delta_t {\boldsymbol\sigma}_n)\Vert_{0} + \Vert (\hat u_n,{\boldsymbol\sigma}_n)\Vert_{1}^3 + \Vert (\hat u_n,{\boldsymbol\sigma}_n) \Vert_1\right).
\end{equation}
\end{prop}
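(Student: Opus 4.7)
The plan is to rewrite each equation of the scheme \textbf{US} in strong form and close the estimate with elliptic regularity plus Hölder and the interpolation inequality~(\ref{in3D}). First, since $\boldsymbol\sigma_0=\nabla v_0$, a straightforward induction based on Lemma~\ref{eqLEM} gives $\boldsymbol\sigma_n=\nabla v_n$ at every time step, hence $\delta_t\boldsymbol\sigma_n=\nabla(\delta_t v_n)$. Therefore the second equation of the scheme reads in strong form
$$B\boldsymbol\sigma_n=-\delta_t\boldsymbol\sigma_n+2u_n\nabla u_n=\nabla h_n,\qquad h_n:=u_n^2-\delta_t v_n,$$
which is precisely of the gradient form required by Lemma~\ref{lemH2s01}, and so
$$\Vert\boldsymbol\sigma_n\Vert_2\le C\,\Vert\nabla h_n\Vert_0\le C\bigl(\Vert\delta_t\boldsymbol\sigma_n\Vert_0+\Vert u_n\nabla u_n\Vert_0\bigr).$$
Simultaneously, the first equation in strong form is $Au_n=u_n-\delta_t u_n-\nabla\cdot(u_n\boldsymbol\sigma_n)$ with Neumann data (consistent because $\boldsymbol\sigma_n\cdot\mathbf{n}=0$ on $\partial\Omega$), so by~(\ref{H2us}),
$$\Vert u_n\Vert_2\le C\,\Vert Au_n\Vert_0\le C\bigl(\Vert u_n\Vert_0+\Vert\delta_t u_n\Vert_0+\Vert\boldsymbol\sigma_n\cdot\nabla u_n\Vert_0+\Vert u_n\nabla\cdot\boldsymbol\sigma_n\Vert_0\bigr).$$

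Next I would bound each nonlinear $L^2$-product by Hölder with exponents $(6,3)$, the Sobolev embedding $H^1\hookrightarrow L^6$, and the interpolation inequality~(\ref{in3D}) applied to $\nabla u_n$ and to $\nabla\cdot\boldsymbol\sigma_n$ (noting that $\Vert\nabla w\Vert_1\le\Vert w\Vert_2$):
\begin{align*}
\Vert u_n\nabla u_n\Vert_0 &\le \Vert u_n\Vert_{L^6}\Vert\nabla u_n\Vert_{L^3}\le C\,\Vert u_n\Vert_1^{3/2}\Vert u_n\Vert_2^{1/2},\\
\Vert\boldsymbol\sigma_n\cdot\nabla u_n\Vert_0 &\le C\,\Vert\boldsymbol\sigma_n\Vert_1\,\Vert u_n\Vert_1^{1/2}\Vert u_n\Vert_2^{1/2},\\
\Vert u_n\nabla\cdot\boldsymbol\sigma_n\Vert_0 &\le C\,\Vert u_n\Vert_1\,\Vert\boldsymbol\sigma_n\Vert_1^{1/2}\Vert\boldsymbol\sigma_n\Vert_2^{1/2}.
\end{align*}
Young's inequality then splits each product into a small multiple of $\Vert u_n\Vert_2$ or $\Vert\boldsymbol\sigma_n\Vert_2$ (to be absorbed into the LHS) plus a cubic residual in the $H^1$-norm of the form $\Vert u_n\Vert_1^3$, $\Vert u_n\Vert_1\Vert\boldsymbol\sigma_n\Vert_1^2$ or $\Vert u_n\Vert_1^2\Vert\boldsymbol\sigma_n\Vert_1$, each bounded by $\Vert(u_n,\boldsymbol\sigma_n)\Vert_1^3$. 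Summing the two estimates and choosing the Young constants small enough to absorb the $\Vert u_n\Vert_2$ and $\Vert\boldsymbol\sigma_n\Vert_2$ contributions on both sides yields~(\ref{H2est1}).

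The hard part is the simultaneous absorption: because $H^1\not\hookrightarrow L^\infty$ in 3D, every nonlinear term above produces a half-power of $\Vert u_n\Vert_2$ or $\Vert\boldsymbol\sigma_n\Vert_2$ after interpolation, so Young must be applied with sufficiently small constants in both estimates at once and the cross-coupling carefully tracked. A subtle preliminary point is that Lemma~\ref{lemH2s01} formally needs $h_n\in H^1(\Omega)$, while a priori from $u_n\in H^1$ the product $u_n\nabla u_n$ only lies in $L^{3/2}$ in 3D. This is fixed by a short qualitative bootstrap: applying the $u$-equation with RHS in $L^{3/2}$ gives $u_n\in W^{2,3/2}$, whence $\nabla u_n\in L^3$ by Sobolev embedding and $u_n\nabla u_n\in L^2$, so $h_n\in H^1(\Omega)$; once this is established, Lemma~\ref{lemH2s01} is legitimately invoked and the quantitative estimate closes as above.
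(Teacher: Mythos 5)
Your proposal follows essentially the same route as the paper's proof: rewriting the scheme in strong form via the operators $A$ and $B$, a qualitative bootstrap through $W^{2,3/2}(\Omega)$ to justify that the right-hand side of the ${\boldsymbol\sigma}$-equation is the gradient of an $H^1$ function (so that Lemma~\ref{lemH2s01} applies, using ${\boldsymbol\sigma}_n=\nabla v_n$ from Lemma~\ref{eqLEM}), and then the quantitative bound via (\ref{H2us}), H\"older, the interpolation inequality (\ref{in3D}) and Young with mutual absorption of the $H^2$-norms. The only cosmetic difference is the ordering (the paper completes the full qualitative bootstrap, including the final step $u_n\in H^2(\Omega)$ once ${\boldsymbol\sigma}_n\in \H^2(\Omega)$ is known, before writing the estimates), and your interpolation bounds reproduce the paper's (\ref{H2001})--(\ref{H2002}) exactly.
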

\begin{proof}
Recall that we have  assumed the $H^2$ and $H^3$-regularity of problem (\ref{B001}) and  (\ref{B101}), 
which imply the $H^2$-regularity of problem (\ref{B201}) in the case of ${\textbf {\textit f}}=\nabla h$ for some $h\in H^1(\Omega)$. Moreover, observe that scheme \textbf{US} can be rewritten in terms of $(\hat u_n,{\boldsymbol \sigma}_{n})$ and the operators $\widehat A$ and $B$
(defined in (\ref{B001}) and in (\ref{B201}), respectively)
 as 
\begin{align*} \label{problema1}
\left\{ \begin{array}{lll}
 \displaystyle \widehat A \hat u_n = -\delta_t \hat u_n + \nabla\cdot((\hat u_n+m_0){\boldsymbol \sigma}_{n}),  \vspace{0,2 cm}\\
 \displaystyle B {\boldsymbol \sigma}_{n} =\nabla( -\delta_t v_{n} + (\hat u_n+m_0)^2).
 \end{array}\right.
\end{align*}
Now, since $(\hat u_n,{\boldsymbol \sigma}_{n})\in H^{1}(\Omega)\times \H^{1}_{\sigma}(\Omega) \hookrightarrow {L}^{6}(\Omega)\times \L^{6}(\Omega)$, 
then 
$\nabla\cdot ((\hat u_n+m_0){\boldsymbol \sigma}_{n} ) 
\in L^{3/2}(\Omega)$,
hence from classical elliptic regularity  $\hat u_n \in W^{2,3/2}(\Omega)$. 
In particular,  since $\hat u_n\in W^{2,3/2}(\Omega)$, then $\nabla \hat u_n \in \W^{1,3/2}(\Omega)\hookrightarrow \L^3(\Omega)$, and therefore $\nabla  (\hat u_n+m_0)^2\in \L^2(\Omega)$. Thus, using 
  the $H^2$-regularity of problem (\ref{B201}), 
one has that ${\boldsymbol \sigma}_{n}\in \H^2(\Omega)$. Finally, taking into account  that ${\boldsymbol \sigma}_{n}\in \H^2(\Omega)$ and $\hat u_n \in W^{2,3/2}(\Omega)$, then $\nabla\cdot ((\hat u_n+m_0){\boldsymbol \sigma}_{n} ) \in L^{2}(\Omega)$, 
hence we conclude that $\hat u_n\in H^2(\Omega)$. 

Besides, from (\ref{H2us})-(\ref{eH201}), the interpolation inequality (\ref{in3D}), and using the H\"older and Young inequalities, one has
\begin{eqnarray}\label{H2001}
&\Vert \hat u_n\Vert_{2}&\!\!\!\!\leq C\left( \Vert \delta_t \hat u_{n} \Vert_{0} + \Vert \hat u_n \nabla \cdot {\boldsymbol \sigma}_{n}\Vert_{0} + \Vert  {\boldsymbol \sigma}_{n} \cdot \nabla \hat u_n\Vert_{0}  +\Vert  {\boldsymbol \sigma}_{n} \Vert_1 \right)\nonumber\\
&& \!\!\!\!\leq C (\Vert \delta_t \hat u_{n} \Vert_{0}+\Vert {\boldsymbol \sigma}_{n}\Vert_1 )+ \displaystyle\frac{1}{2}\Vert {\boldsymbol \sigma}_{n}\Vert_{2}+ \frac{1}{4}\Vert \hat u_n\Vert_{2}  + C\Vert \hat u_n\Vert_{1}^2 \Vert {\boldsymbol \sigma}_{n}\Vert_{1} +C\Vert  \hat u_n\Vert_{1} \Vert  {\boldsymbol \sigma}_{n}\Vert_{1}^2, 
\end{eqnarray}
\begin{equation}\label{H2002}
\Vert {\boldsymbol \sigma}_n\Vert_{2}
\leq C\left( \Vert \delta_t {\boldsymbol \sigma}_n \Vert_{0} 
+ \Vert \hat  u_n \nabla \hat u_{n}\Vert_{0}  + \Vert \hat  u_n \Vert_{1}\right) 
%
\leq C \left(\Vert \delta_t {\boldsymbol \sigma}_n \Vert_{0}  + \Vert \hat  u_n \Vert_{1}\right) 
+ \displaystyle\frac{1}{4}\Vert \hat  u_n\Vert_{2} 
+ C\Vert  \hat u_n\Vert_{1}^3.
\end{equation}
Then, adding (\ref{H2001}) and (\ref{H2002}), one arrives at (\ref{H2est1}).
\end{proof}

From Proposition \ref{regUS}, 
we  can consider the following pointwise differential formulation of  the scheme \textbf{US}:
\begin{equation}  \label{fuertes10}
\left\{
\begin{array}
[c]{lll}%
\delta_t \hat u_n + \widehat A \hat u_n = \nabla\cdot ((\hat u_n+m_0){\boldsymbol \sigma}_n)  \ \ \mbox{ a.e.~$\x \in \Omega$},\\
\delta_t {\boldsymbol \sigma}_n +B {\boldsymbol \sigma}_n=
2 (\hat u_n+m_0) \nabla \hat u_n  \ \ \mbox{ a.e.} \ \x \in \Omega.
\end{array}
\right.  
\end{equation}

\begin{lem}\label{Lm:StrongIneq-n} {\bf(Strong inequality for $(u_n,{\boldsymbol\sigma}_n)$) }
It holds 
\begin{equation}\label{StrongIneq-n}
\delta_t
\Vert  (\hat u_n, {\boldsymbol\sigma}_n) \Vert_{1}^2 
 + \Vert (\hat u_n, {\boldsymbol\sigma}_n)\Vert_{2}^2
 +  \Vert  (\delta_t \hat  u_n, \delta_t {\boldsymbol\sigma}_n) \Vert_{0}^2
\le
 C_1 \Big(\Vert  (\hat u_n, {\boldsymbol\sigma}_n)  \Vert_{1}^2\Big)^d +
 C_2 \Vert  (\hat u_n, {\boldsymbol\sigma}_n) \Vert_{1}^2 
\end{equation}
where $d=2$ for $2D$ domains and $d=3$ for $3D$ domains.
\end{lem}
\begin{proof}
By testing \eqref{fuertes10}$_1$ by $\widehat A \hat u_n + \delta_t \hat u_n $ and \eqref{fuertes10}$_2$ by  $B {\boldsymbol\sigma}_n + \delta_t   {\boldsymbol\sigma}_n$,  bounding the right hand side using either \eqref{in2D} in $2D$ or \eqref{in3D} in $3D$, the $H^2$-inequality $\Vert ( \hat u_{n},  {\boldsymbol \sigma}_{n})\Vert_{2} \le C\, \Vert (\widehat A \hat u_{n}, B {\boldsymbol \sigma}_{n})\Vert_{0}$ and the Young inequality, 
 one has
\begin{eqnarray*}\label{fuertesb}
&&\displaystyle  \delta_t \Vert (\hat u_n,{\boldsymbol \sigma}_n)\Vert_{1}^2
+ \Vert (\delta_t \hat u_{n}, \delta_t {\boldsymbol \sigma}_{n})\Vert_{0}^2
+ \Vert (\widehat A \hat u_{n}, B {\boldsymbol \sigma}_{n})\Vert_{0}^2 
\\
 &&\quad \leq C(\Vert  (\hat u_n+m_0) ( {\boldsymbol \sigma}_n + \hat u_n)\Vert_1^2  
   \leq C\Big(\Vert (\hat u_n+m_0,{\boldsymbol\sigma}_n)\Vert^2_1\Big)^{d/2}\Vert (\hat u_n,{\boldsymbol\sigma}_n)\Vert_{2}
 \\
&& \quad \le \frac12 \Vert (\widehat A \hat u_{n}, B {\boldsymbol \sigma}_{n})\Vert_{0}^2
 +C_1 \Big(\Vert  (\hat u_n, {\boldsymbol\sigma}_n)  \Vert_{1}^2\Big)^d + C_2 \Vert (\hat u_n, {\boldsymbol\sigma}_n) \Vert_{1}^2 
\end{eqnarray*}
hence \eqref{StrongIneq-n} holds.
\end{proof}

\begin{cor}\label{Cor:StrongReg-n} {\bf(Strong estimates for $(u_n,{\boldsymbol\sigma}_n)$) }
Let $(u_0, v_0)\in H^1(\Omega)\times H^2(\Omega)$. Assuming 
 the following regularity criterion:
\begin{equation}\label{strong01}
\Vert (u_n, {\boldsymbol\sigma}_n)\Vert_{1}^{2}\leq K_0, \ \ \ \forall n\geq 0,
\end{equation}
then the following  estimate holds
\begin{equation}\label{ssa-n}
 k \underset{m=1}{\overset{n}{\sum}}
\Big(\Vert (\hat u_m, {\boldsymbol \sigma}_m)\Vert_{2}^2
+\Vert (\delta_t\hat u_m, \delta_t {\boldsymbol \sigma}_m)\Vert_{0}^2
\Big)\leq K_1, \ \ \ \forall n\geq 1.
\end{equation}
\end{cor}

\begin{proof}
From \eqref{StrongIneq-n} and \eqref{strong01}
$$
\delta_t
\Vert  (\hat u_n, {\boldsymbol\sigma}_n) \Vert_{1}^2 
 + \Vert  (\hat u_n, {\boldsymbol\sigma}_n) \Vert_{2}^2
 +  \Vert  (\delta_t \hat  u_n, \delta_t {\boldsymbol\sigma}_n) \Vert_{0}^2
\le C\, \Vert  (\hat u_n, {\boldsymbol\sigma}_n) \Vert_{1}^2 
$$
 hence by applying \eqref{weak01} the estimate  \eqref{ssa-n} is deduced.
\end{proof}

 Corollaries \ref{CorvL1} and  \ref{Cor:StrongReg-n} and the equality ${\boldsymbol \sigma}_n=\nabla v_n$ yield to 
\begin{cor} {\bf(Strong estimates for $v_n$)}
Let $v_n$ be the solution of (\ref{edovf}). Under the hypothesis of Corollary  \ref{Cor:StrongReg-n}, the following estimate holds
\begin{equation*}
\Vert v_n\Vert_{2}^{2} + 
k \underset{m=1}{\overset{n}{\sum}}\Vert \hat v_m\Vert_{3}^2\leq K_2,  \ \ \ \forall n\geq 1.
\end{equation*}
\end{cor}

\subsection{Uniform regular estimates}\label{UES-bis}
Denote by $\widetilde{u}_n=\delta_t u_n$ and $\widetilde{\boldsymbol \sigma}_n=\delta_t {\boldsymbol \sigma}_n$. Then, by making the time discrete derivative of (\ref{modelf02}), and using the equality $\delta_t(a_nb_n)=(\delta_t a_n) b_{n-1} +  a_n (\delta_t b_{n})$, we obtain that $(\widetilde{u}_n,\widetilde{\boldsymbol \sigma}_n)$ satisfies
\begin{equation}
\left\{
\begin{array}
[c]{lll}%
(\delta_t \widetilde{u}_n,\overline{u}) + (\nabla \widetilde{u}_n, \nabla \overline{u}) + (\widetilde{u}_n{\boldsymbol \sigma}_{n-1},\nabla \overline{u})+(u_n\widetilde{\boldsymbol \sigma}_n,\nabla \overline{u})=0,\quad \forall \,\overline{u}\in H^{1}(\Omega),
\\
(\delta_t \widetilde{\boldsymbol \sigma}_n,\overline{\boldsymbol \sigma}) +
\langle B \widetilde{\boldsymbol \sigma}_n, \overline{\boldsymbol \sigma}\rangle  =
2(\widetilde{u}_n\nabla u_{n-1},\overline{\boldsymbol \sigma})+2(u_n\nabla \widetilde{u}_n,\overline{\boldsymbol \sigma}),
\quad \forall \,\overline{\boldsymbol \sigma}\in H^{1}(\Omega).
\end{array}
\right.  \label{masfuert1}
\end{equation}
\begin{lem}\label{Lm:RegIneq-n} {\bf(Weak inequality for $(\widetilde u_n,\widetilde{\boldsymbol\sigma}_n)$). } It holds
\begin{equation}\label{masfuert5}
\displaystyle  \delta_t\left(  \frac{1}{2}\Vert \widetilde{u}_n\Vert^2_{0} +\frac{1}{4}\Vert \widetilde{\boldsymbol \sigma}_n \Vert^2_{0}\right)
+ 
\displaystyle\frac{1}{4} \Vert (\widetilde{u}_n, \widetilde{\boldsymbol \sigma}_n)\Vert^{2}_{1}  \leq  C\Vert (\widetilde{u}_n, \widetilde{\boldsymbol \sigma}_n) \Vert_{0}^2.
\end{equation}
\end{lem}
\begin{proof}
Taking $\overline{u}=\widetilde{u}_n$ and $\displaystyle \overline{\boldsymbol \sigma}=\frac{1}{2}\widetilde{\boldsymbol \sigma}_n$ in (\ref{masfuert1}) and adding the resulting expressions, the terms $(u_n\widetilde{\boldsymbol \sigma}_n,\nabla \widetilde{u}_n)$ cancel, and taking into account \eqref{H1poin} because $\displaystyle \int_\Omega \widetilde{u}_n=0$, one has
\begin{eqnarray*}\label{masfuert2}
&\displaystyle  \delta_t &\!\!\!\!\!\left(  \frac{1}{2}\Vert \widetilde{u}_n\Vert^2_{0} +\frac{1}{4}\Vert \widetilde{\boldsymbol \sigma}_n \Vert^2_{0}\right)
+ \displaystyle\frac{1}{2} \Vert (\widetilde{u}_n, \widetilde{\boldsymbol \sigma}_n)\Vert^{2}_{1} \nonumber\\
&& \!\!\!\!\!\!\! \leq \Vert \widetilde{u}_n \Vert_{L^3}\Vert {\boldsymbol \sigma}_{n-1} \Vert_{L^6} \Vert \nabla \widetilde{u}_n \Vert_{0} + \Vert \widetilde{u}_n \Vert_{L^6}\Vert \widetilde{\boldsymbol \sigma}_n \Vert_{L^3} \Vert \nabla u_{n-1} \Vert_{0}\nonumber\\
&& \!\!\!\!\!\!\! \leq \displaystyle\frac{1}{4} \Vert ( \widetilde{u}_n, \widetilde{\boldsymbol \sigma}_n)
 \Vert_{1}^2
 +  C\,\Vert (\widetilde{u}_n  , \widetilde{\boldsymbol \sigma}_n )\Vert_{0}^2, 
\end{eqnarray*}
where  (\ref{in3D}) and (\ref{strong01}) have been used in the last inequality. Consequently estimate 
\eqref{masfuert5} holds.
\end{proof}

\begin{cor}\label{Cor:RegEst-n} {\bf(Regular estimates for $(u_n,{\boldsymbol\sigma}_n)$) }
Assu\-me that $(u_0,{\boldsymbol\sigma}_0)\in H^2(\Omega)\times \H^2(\Omega)$. Under the hypothesis of Corollary \ref{Cor:StrongReg-n}, the following estimates hold
\begin{equation}\label{stdta}
\Vert (\delta_t u_n, \delta_t {\boldsymbol \sigma}_n)\Vert^2_{0} 
+
 k \underset{m= 1}{\overset{n}{\sum}}\Vert (\delta_t u_m,
\delta_t {\boldsymbol \sigma}_m) \Vert^{2}_{1}
\leq C_1, \ \ \forall n\geq 1,
\end{equation}
\begin{equation}\label{msus01}
\Vert (u_n,{\boldsymbol \sigma}_n)\Vert_{2} \leq C_2, \ \ \forall n\geq 0.
\end{equation}  
\end{cor}

\begin{obs}
In particular, from (\ref{msus01}) one has $
\Vert (u_n,{\boldsymbol \sigma}_n)\Vert_{L^\infty} \leq C_3$ for all $n\geq 0$.
\end{obs}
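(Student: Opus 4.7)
The plan is to derive the uniform $L^\infty$ bound from the already-established uniform $H^2$ bound (\ref{msus01}) by invoking a standard Sobolev embedding, with no further estimation of the scheme required. Since $\Omega \subset \mathbb{R}^n$ with $n=2,3$, the embedding $H^2(\Omega) \hookrightarrow L^\infty(\Omega)$ (and analogously $\H^2(\Omega) \hookrightarrow \L^\infty(\Omega)$) is continuous; this is what makes the remark work uniformly in $n$ and $k$.

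Concretely, I would first recall that for bounded Lipschitz domains in dimension at most three, there exists an absolute constant $C_{emb}>0$ (depending only on $\Omega$) such that
\begin{equation*}
\Vert w\Vert_{L^\infty} \leq C_{emb}\,\Vert w\Vert_{2} \quad \forall\, w\in H^2(\Omega),
\end{equation*}
and analogously for vector-valued functions in $\H^2(\Omega)$. Then I would apply this embedding componentwise to $u_n$ and to ${\boldsymbol \sigma}_n$, and use estimate (\ref{msus01}) from Theorem \ref{mslem} to bound the right-hand side uniformly:
\begin{equation*}
\Vert (u_n,{\boldsymbol \sigma}_n)\Vert_{L^\infty} \leq C_{emb}\,\Vert (u_n,{\boldsymbol \sigma}_n)\Vert_{2} \leq C_{emb}\, C_6 =: C_7, \quad \forall\, n\geq 0.
\end{equation*}
Since $C_6$ and $C_{emb}$ are both independent of $k$ and $n$ (the former by the conclusion of Theorem \ref{mslem}, the latter by construction), the constant $C_7$ has the required uniformity.

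There is essentially no obstacle here: the remark is a one-line corollary of (\ref{msus01}) and a Sobolev embedding, and no Gronwall-type or time-discrete argument is needed. The only point to mention is that the argument uses $n\leq 3$; the critical embedding $H^2\hookrightarrow L^\infty$ would fail in higher dimensions, but this is already compatible with the standing assumption of the paper.
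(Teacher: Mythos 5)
Your proposal is correct and coincides with the (implicit) argument of the paper: the remark is stated there as an immediate consequence of (\ref{msus01}) together with the Sobolev embedding $H^2(\Omega)\hookrightarrow L^\infty(\Omega)$, valid for bounded domains in dimension $n\leq 3$, which is exactly what you invoke. The resulting constant $C_7=C_{emb}\,C_6$ is independent of $k$ and of the time index, as required.
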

\begin{proof}
Observe that from (\ref{modelf02}) one has that, for all $(\overline{u},\overline{\boldsymbol \sigma})\in H^1(\Omega)\times\H^{1}_{\sigma}(\Omega)$,
\begin{equation}
\left\{
\begin{array}
[c]{lll}%
(\delta_t u_1,\overline{u}) + (\nabla (u_1-u_0), \nabla \overline{u})+ (\nabla u_0, \nabla \overline{u})  = -(u_1({\boldsymbol \sigma}_1-{\boldsymbol\sigma}_0),\nabla \overline{u})-(u_1{\boldsymbol\sigma}_0,\nabla \overline{u}),\\
(\delta_t {\boldsymbol \sigma}_1,\overline{\boldsymbol \sigma}) +
\langle B ({\boldsymbol \sigma}_1  - {\boldsymbol \sigma}_0), \overline{\boldsymbol
\sigma}\rangle  + \langle B  {\boldsymbol \sigma}_0, \overline{\boldsymbol
\sigma}\rangle =
2(u_1\nabla (u_1-u_0),\overline{\boldsymbol \sigma})+2(u_1\nabla u_0,\overline{\boldsymbol \sigma}).
\end{array}
\right.  \label{edt1}
\end{equation}
Then, testing (\ref{edt1}) by $\overline{u}=\delta_t u_1$, $\overline{\boldsymbol\sigma}=\frac{1}{2}\delta_t {\boldsymbol\sigma}_1$ and adding, the terms $\frac{1}{k}(u_1\nabla (u_1-u_0),{\boldsymbol \sigma}_1 - {\boldsymbol\sigma}_0)$ cancel, and using the H\"older and Young inequalities and (\ref{strong01}), we  deduce
\begin{equation}\label{edt2}
\Vert (\delta_t u_1,\delta_t {\boldsymbol \sigma}_1)\Vert_0^2\leq C \, \Vert (u_0,{\boldsymbol \sigma}_0)\Vert_2^2.
\end{equation}
Therefore, estimate  (\ref{stdta}) is deduced from (\ref{masfuert5}) and using (\ref{ssa-n}) and (\ref{edt2}). 
Finally, from (\ref{weak01}), (\ref{H2est1}), (\ref{strong01}) and (\ref{stdta}), it holds (\ref{msus01}).
\end{proof}

 Corollaries \ref{CorvL1} and \ref{Cor:RegEst-n} and the equality ${\boldsymbol \sigma}_n=\nabla v_n$ yield to  
\begin{cor}\label{CormorestV} {\bf(More regular estimates for $v_n$)}
Let $v_n$ be the solution of (\ref{edovf}). Under hypothesis of Corollary \ref{Cor:RegEst-n}, the following estimate holds
\begin{equation*}
\Vert v_n\Vert_{3}^{2}\leq C_4,  \ \ \ \forall n\geq 0.
\end{equation*}
\end{cor}

\subsubsection{Proof of (\ref{strong01}) in 2D domains}
In this section, the validity of the estimate (\ref{strong01}) in $2D$ domains will be proved. With this aim, an auxiliary result will be first considered.
\begin{prop}
Let $(u_n,{\boldsymbol\sigma}_n)$ be any solution of the scheme \textbf{US} defined in $2D$ domains. Then, the following estimate holds
\begin{equation}\label{GD1}
\Vert (u_n,{\boldsymbol \sigma}_n)\Vert^2_{1}\leq C \Vert (u_{n-1},{\boldsymbol \sigma}_{n-1})\Vert^2_{1}.
\end{equation}
\end{prop}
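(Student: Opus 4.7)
The plan is to upgrade the one-step energy-type inequality (\ref{fuertesb}) into a multiplicative $H^1$-growth bound by estimating its two nonlinear terms with the two-dimensional interpolation (\ref{in2D}). I would start from (\ref{fuertesb}), which already isolates $\delta_t\bigl(\tfrac{1}{2}\Vert(u_n,\boldsymbol{\sigma}_n)\Vert_1^2\bigr)$ while providing a $\tfrac{1}{2}\Vert(\delta_t u_n,\delta_t\boldsymbol{\sigma}_n)\Vert_0^2$ sponge on the left. Applying $\Vert f\Vert_{L^4}^2\leq C\Vert f\Vert_0\Vert f\Vert_1$ componentwise to the right-hand side gives, for instance,
\begin{equation*}
\Vert u_n\nabla u_n\Vert_0^2 \leq \Vert u_n\Vert_{L^4}^2\,\Vert\nabla u_n\Vert_{L^4}^2 \leq C\,\Vert u_n\Vert_0\,\Vert u_n\Vert_1\,\Vert\nabla u_n\Vert_0\,\Vert\nabla u_n\Vert_1,
\end{equation*}
and analogous bounds for $\Vert\nabla u_n\cdot\boldsymbol{\sigma}_n\Vert_0^2$ and $\Vert u_n\nabla\cdot\boldsymbol{\sigma}_n\Vert_0^2$. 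The uniform $L^2$-control from (\ref{weak01}) absorbs $\Vert u_n\Vert_0$ and $\Vert\boldsymbol{\sigma}_n\Vert_0$ into constants, and (\ref{H1div}) gives $\Vert\nabla u_n\Vert_1\leq \Vert u_n\Vert_2$ together with $\Vert\nabla\cdot\boldsymbol{\sigma}_n\Vert_1\leq\Vert\boldsymbol{\sigma}_n\Vert_2$, so all three nonlinearities collapse into $C\,\Vert(u_n,\boldsymbol{\sigma}_n)\Vert_1^2\,\Vert(u_n,\boldsymbol{\sigma}_n)\Vert_2$.

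The next step is to insert the $H^2$-regularity estimate (\ref{H2est1}),
\begin{equation*}
\Vert(u_n,\boldsymbol{\sigma}_n)\Vert_2 \leq C\bigl(\Vert(\delta_t u_n,\delta_t\boldsymbol{\sigma}_n)\Vert_0 + \Vert(u_n,\boldsymbol{\sigma}_n)\Vert_1^3 + \Vert u_n\Vert_0\bigr).
\end{equation*}
By Young's inequality, the cross term $C\,\Vert(u_n,\boldsymbol{\sigma}_n)\Vert_1^2\,\Vert(\delta_t u_n,\delta_t\boldsymbol{\sigma}_n)\Vert_0$ is absorbed into the $\tfrac12\Vert(\delta_t u_n,\delta_t\boldsymbol{\sigma}_n)\Vert_0^2$ sponge, the $\Vert u_n\Vert_0$ contribution is controlled by (\ref{weak01}), and one is left with
\begin{equation*}
\delta_t\Vert(u_n,\boldsymbol{\sigma}_n)\Vert_1^2 \leq C\,\Vert(u_n,\boldsymbol{\sigma}_n)\Vert_1^2\,\bigl(1+\Vert(u_n,\boldsymbol{\sigma}_n)\Vert_1^2+\Vert(u_n,\boldsymbol{\sigma}_n)\Vert_1^3\bigr).
\end{equation*}
Multiplying by $k$ and solving the implicit inequality $x_n(1-Ck\Phi_n)\leq x_{n-1}$ for $x_n = \Vert(u_n,\boldsymbol{\sigma}_n)\Vert_1^2$, where $\Phi_n=1+x_n+x_n^{3/2}$, delivers (\ref{GD1}); the data-determined lower bound $x_n\geq (\int_\Omega u_0)^2$ from (\ref{H1poin}) together with mass conservation ensures that the multiplicative form of the conclusion makes sense.

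The main obstacle is making $K_1$ genuinely independent of $k$ and $n$: closing the implicit inequality requires $Ck\Phi_n$ to remain bounded away from $1$, which is not automatic from the weak estimates alone. The natural remedy, which I expect the authors to invoke immediately after (\ref{GD1}) in order to establish (\ref{strong01}), is to combine the one-step growth with the time-averaged bound (\ref{weak02}): over any fixed window the $L^2$-in-time control forces at least one index $m^\star$ at which $x_{m^\star}$ is bounded by a $k$-independent constant, and (\ref{GD1}) then propagates that bound across the finitely many remaining steps inside the window via the uniform discrete Gronwall-type argument of Corollary \ref{CorUnif}. The 2D restriction is essential throughout: the analogous computation in 3D would replace (\ref{in2D}) by (\ref{in3D}), producing higher exponents (as in (\ref{strong-uv-1})) that refuse to factor out $\Vert(u_n,\boldsymbol{\sigma}_n)\Vert_1^2$ cleanly, consistently with strong regularity being available only in two dimensions.
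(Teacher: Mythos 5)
There is a genuine gap, and it is precisely the one you flag at the end: your route cannot produce a constant $K_1$ independent of $k$ and $n$. Starting from (\ref{fuertesb}) and inserting (\ref{H2est1}) leaves you with a superlinear recursion of the form $x_n \leq x_{n-1} + Ck\,x_n\bigl(1+x_n+x_n^{3/2}\bigr)$ with $x_n=\Vert (u_n,{\boldsymbol\sigma}_n)\Vert_1^2$, and turning this into $x_n\leq K_1x_{n-1}$ requires $Ck\bigl(1+x_n+x_n^{3/2}\bigr)$ to stay below $1$ --- an a priori bound on $x_n$, which is exactly what is not yet available (indeed (\ref{strong01}), which would supply it, is proved \emph{from} (\ref{GD1})). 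Your proposed remedy is circular: it invokes (\ref{GD1}) itself to propagate a good value $x_{m^\star}$ across a window, and even granting it, a multiplicative bound with $K_1>1$ applied over the $O(1/k)$ steps of a fixed window gives $K_1^{T/k}$, which blows up as $k\to 0$. The paper's subsequent propagation argument (Theorem \ref{stlemne}) uses the additive inequality (\ref{diunif01ne}) together with (\ref{weak02}) and Corollary \ref{CorUnif}, not iteration of (\ref{GD1}).

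The idea you are missing is structural, not analytic. The paper tests (\ref{modelf02}) by the \emph{increments} $\bar u=u_n-u_{n-1}$ and $\bar{\boldsymbol\sigma}=\tfrac12({\boldsymbol\sigma}_n-{\boldsymbol\sigma}_{n-1})$ and exploits the exact cancellation $(u_n{\boldsymbol\sigma}_n,\nabla u_n)=(u_n\nabla u_n,{\boldsymbol\sigma}_n)$, which rewrites the nonlinear contribution as
\begin{equation*}
(u_n\nabla u_{n-1},{\boldsymbol\sigma}_n-{\boldsymbol\sigma}_{n-1})-(u_n{\boldsymbol\sigma}_{n-1},\nabla(u_n-u_{n-1})),
\end{equation*}
i.e.\ one factor in each term is shifted to time level $n-1$. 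After the 2D interpolation (\ref{in2D}) and the uniform $L^2$ bound (\ref{weak01}), these terms are absorbed by $\tfrac18\Vert(u_n-u_{n-1},{\boldsymbol\sigma}_n-{\boldsymbol\sigma}_{n-1})\Vert_1^2+\tfrac18\Vert u_n\Vert_1^2$ plus $C\Vert(u_{n-1},{\boldsymbol\sigma}_{n-1})\Vert_1^2$, and the left-hand side already carries $\tfrac14\Vert(u_n,{\boldsymbol\sigma}_n)\Vert_1^2-\tfrac12\Vert(u_{n-1},{\boldsymbol\sigma}_{n-1})\Vert_1^2$ and the full increment norm, so (\ref{GD1}) drops out in one step with no $H^2$ estimate, no smallness of $k$, and no Gronwall argument. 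Your version forfeits this cancellation the moment you pass to (\ref{fuertesb}), where the right-hand side is the full $L^2$ norm of the nonlinearities at level $n$; that is why you are forced through $\Vert(u_n,{\boldsymbol\sigma}_n)\Vert_2$ and end up with an inequality that cannot be closed unconditionally.
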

\begin{proof}
Taking $\overline{u}=u_n - u_{n-1}$ and $\overline{\boldsymbol \sigma}=\displaystyle\frac{1}{2}({\boldsymbol \sigma}_{n}-{\boldsymbol \sigma}_{n-1})$ in (\ref{modelf02}), and 
recalling that using (\ref{consu})  the equality 
$\Vert  u_n\Vert_{1}^2 - \Vert  u_{n-1}\Vert_{1}^2=\Vert \nabla u_n\Vert_{0}^2 - \Vert \nabla u_{n-1}\Vert_{0}^2$ holds, one has
\begin{eqnarray}\label{fuerte1}
&\!\!\!\!\displaystyle
\frac{1}{4} & \!\!\!\!\!\! \Vert ( u_n,{\boldsymbol \sigma}_n)\Vert_{1}^{2}-\frac{1}{2}\Vert
(  u_{n-1},{\boldsymbol \sigma}_{n-1})\Vert_{1}^{2} 
 + \displaystyle\frac{1}{4} \, \Vert (u_n-u_{n-1},\sigma_n-\sigma_{n-1})\Vert_1^2
 \nonumber\\
&& \le  \vert (u_n\nabla u_n,{\boldsymbol \sigma}_n-{\boldsymbol \sigma}_{n-1})
-(u_n{\boldsymbol \sigma}_n,\nabla (u_n-u_{n-1}))\vert
=| (u_n, {\boldsymbol \sigma}_n\cdot\nabla u_{n-1} - \nabla u_n\cdot {\boldsymbol \sigma}_{n-1})|
\nonumber
\\ &&
 =  
 | (u_n, ({\boldsymbol \sigma}_n-{\boldsymbol \sigma}_{n-1})\cdot\nabla u_{n-1} - (\nabla u_n-\nabla u_{n-1})\cdot {\boldsymbol \sigma}_{n-1})|.
\end{eqnarray}
Then, by using the H\"older and Young inequalities as well as the $2D$ interpolation inequality  (\ref{in2D}) and estimate (\ref{weak01}), we find
\begin{eqnarray}\label{fuerte3}
&&
 | (u_n, ({\boldsymbol \sigma}_n-{\boldsymbol \sigma}_{n-1})\cdot\nabla u_{n-1} - (\nabla u_n-\nabla u_{n-1})\cdot {\boldsymbol \sigma}_{n-1})|
\nonumber\\
&& \leq \Vert u_n\Vert_{L^4}\Big( \Vert \nabla u_{n-1}\Vert_{0}  \Vert {\boldsymbol \sigma}_n-{\boldsymbol \sigma}_{n-1}\Vert_{L^4} + \Vert \nabla (u_n-u_{n-1})\Vert_{0}  \Vert {\boldsymbol \sigma}_{n-1}\Vert_{L^4} \Big)\nonumber\\
&&\leq\displaystyle\frac{1}{8}\Vert (u_n-u_{n-1},  {\boldsymbol \sigma}_n-{\boldsymbol \sigma}_{n-1})\Vert_{1}^2+\frac{1}{8}\Vert u_n\Vert_{1}^{2}+C\Vert (u_{n-1},{\boldsymbol \sigma}_{n-1})\Vert_{1}^{2}.
\end{eqnarray}
Therefore, from (\ref{fuerte1})-(\ref{fuerte3}), it follows that
\begin{equation*}\label{fuerte5}
\frac{1}{8}\Vert
 (u_n,{\boldsymbol \sigma}_n)\Vert_{1}^{2} 
 \leq \left(\frac{1}{2}+C \right)\Vert (u_{n-1}, {\boldsymbol \sigma}_{n-1})\Vert_{1}^{2},
\end{equation*}
hence (\ref{GD1}) is deduced.
\end{proof}

\begin{tma} \label{stlemne} 
Let $(u_n,{\boldsymbol
\sigma}_n)$ be a solution of the scheme \textbf{US} defined in $2D$ domains. Then, the estimate (\ref{strong01}) holds.
\end{tma}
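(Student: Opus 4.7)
\emph{Strategy: reduce to scheme \textbf{UV} and mimic the continuous 2D argument.} By Lemma \ref{eqLEM} we may equivalently work with the scheme \textbf{UV}, writing ${\boldsymbol\sigma}_n=\nabla v_n$ with $v_n$ given by (\ref{edovf}). Proposition \ref{regUS} (whose proof uses only $(u_n,{\boldsymbol\sigma}_n)\in H^1\times\H^1_\sigma$) and Remark \ref{pvr} (together with the $H^3$-regularity of $A$) guarantee $u_n\in H^2(\Omega)$ and $v_n\in H^3(\Omega)$; a trace identification in the weak form of (\ref{modelfuv02})$_1$, using ${\boldsymbol\sigma}_n\cdot{\bf n}=0$, gives $\partial u_n/\partial{\bf n}=0$ on $\partial\Omega$. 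Hence the required spatial integrations by parts are justified.

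\emph{Discrete strong-energy identity.} Testing the pointwise form of (\ref{modelfuv02})$_1$ by $-\Delta u_n$ and (\ref{edovf}) by $-\tfrac12\Delta v_n$, the discrete time derivatives telescope (via $(a-b,a)=\tfrac12(\|a\|_0^2-\|b\|_0^2)+\tfrac12\|a-b\|_0^2$ applied to $\nabla u_n$ and $\nabla v_n$) to produce
\begin{equation*}
\delta_t\Bigl(\tfrac12\|\nabla u_n\|_0^2+\tfrac14\|\nabla v_n\|_0^2\Bigr)+\tfrac{k}{2}\|\nabla\delta_t u_n\|_0^2+\tfrac{k}{4}\|\nabla\delta_t v_n\|_0^2+\|\Delta u_n\|_0^2+\tfrac12\|\Delta v_n\|_0^2+\tfrac12\|\nabla v_n\|_0^2=\mathrm{RHS}_n,
\end{equation*}
where $\mathrm{RHS}_n$ collects the chemotaxis and production contributions. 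The integration-by-parts rearrangement of $\mathrm{RHS}_n$ carried out in (\ref{strong-uv-1}) is purely spatial, so it transfers verbatim to the discrete level; combining it with the $2D$ interpolation (\ref{in2D}) and Young's inequality exactly as in (\ref{strong-uv-1new}) yields $\mathrm{RHS}_n\le\varepsilon\|\nabla u_n\|_1^2+C_\varepsilon\|\nabla u_n\|_0^2\|\nabla v_n\|_1^2$.

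\emph{Uniform Gronwall closure.} Taking $\varepsilon$ small and adding the previous inequality to the discrete energy law (\ref{lawenerfydisce}), then using the equivalence (\ref{H1divGrad}) to write $\|\nabla v_n\|_0^2+\|\Delta v_n\|_0^2=\|\nabla v_n\|_1^2$, we obtain
\begin{equation*}
\delta_t\Bigl(\|u_n\|_1^2+\tfrac12\|\nabla v_n\|_1^2\Bigr)+\tfrac12\|\nabla u_n\|_1^2+\|\nabla v_n\|_2^2\le C\,\|\nabla u_n\|_0^2\,\|\nabla v_n\|_1^2.
\end{equation*}
With $d^n:=\|u_n\|_1^2+\tfrac12\|{\boldsymbol\sigma}_n\|_1^2$ (using ${\boldsymbol\sigma}_n=\nabla v_n$ and (\ref{H1divGrad})) and $g^n:=C\|\nabla u_n\|_0^2$, $h^n:=0$, the weak estimates (\ref{weak01})-(\ref{weak02}) provide the window bounds $k\sum_{m=n_0+1}^{n_0+n}g^m\le CC_0$ and $k\sum_{m=n_0+1}^{n_0+n}d^m\le C_0+C_1(nk)$, uniformly in $n_0\ge 0$. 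Corollary \ref{CorUnif} applied to $\delta_t d^n\le g^n d^n$ then delivers $d^n\le K_0$ uniformly in $n$, which is exactly (\ref{strong01}).

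\emph{Main obstacle.} The genuinely delicate point is the rearrangement of $\mathrm{RHS}_n$ so as to expose exactly one factor of $\|\nabla u_n\|_0^2$ (the only quantity with uniform $\ell^1$-summability from (\ref{weak01})) multiplying the Gronwall variable $\|\nabla v_n\|_1^2$ to the first power. This relies critically on the $2D$ embedding $\|\nabla u\|_{L^4}\le C\|\nabla u\|_0^{1/2}\|\nabla u\|_1^{1/2}$; the $3D$ counterpart (\ref{in3D}) would produce the factor $\|\nabla v_n\|_1^4$ as in (\ref{strong-uv-1}), for which no uniform summability is available from the weak estimates, and the closure would fail, in complete analogy with the continuous situation.
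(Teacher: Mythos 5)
Your route is genuinely different from the paper's: you transplant the continuous 2D cancellation argument (testing by $-\Delta u_n$ and $-\tfrac12\Delta v_n$ and exploiting the cancellation of the $(u_n\Delta v_n,\Delta u_n)$ terms between the chemotaxis and production contributions, which indeed survives discretization since both nonlinearities sit at the same time level $n$), whereas the paper tests the pointwise form (\ref{fuertes10}) by $Au_n$ and $B{\boldsymbol\sigma}_n$ and bounds the nonlinear terms brutally by $C\Vert(u_n,{\boldsymbol\sigma}_n)\Vert_1^2\Vert(u_n,{\boldsymbol\sigma}_n)\Vert_2$, absorbing the $H^2$-norm into $\Vert(Au_n,B{\boldsymbol\sigma}_n)\Vert_0^2$ at the price of a quartic term $\Vert(u_n,{\boldsymbol\sigma}_n)\Vert_1^4$. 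Your use of Proposition \ref{regUS} and the trace argument to justify the integrations by parts is appropriate, and your identification of the 2D interpolation (\ref{in2D}) as the decisive ingredient matches the paper's logic.

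However, there is a genuine gap at the Gronwall step. Your final inequality reads $\delta_t d^n\le g^n d^n$ with $g^n=C\Vert\nabla u_n\Vert_0^2$, i.e.\ $\frac{d^n-d^{n-1}}{k}\le g^n d^n$ with the right-hand side evaluated at the \emph{new} time level $n$. The hypothesis (\ref{Gb01}) of Lemma \ref{LGD001} (and hence of Corollary \ref{CorUnif}) requires $\frac{d^{n+1}-d^n}{k}\le g^n d^n+h^n$, i.e.\ the product $g\cdot d$ evaluated at the \emph{old} level. An implicit inequality of your form can only be converted into the explicit one by inverting the factor $(1-kg^n)$, which requires $k\Vert\nabla u_n\Vert_0^2$ small --- a restriction on $k$ in terms of the (a priori unknown) solution, which would destroy the unconditional character of the statement. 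This is exactly why the paper inserts the preliminary estimate (\ref{GD1}), $\Vert(u_n,{\boldsymbol\sigma}_n)\Vert_1^2\le K_1\Vert(u_{n-1},{\boldsymbol\sigma}_{n-1})\Vert_1^2$, proved by testing with the increments $u_n-u_{n-1}$ and $\tfrac12({\boldsymbol\sigma}_n-{\boldsymbol\sigma}_{n-1})$: it lets one shift the nonlinear factor one index back and arrive at (\ref{diunif01ne}), which does fit the hypothesis of Corollary \ref{CorUnif}. Your proposal contains no analogue of this index-shifting device, so the closure step as written is not valid; to repair it you would need either to prove a discrete analogue of (\ref{GD1}) for your quantity $d^n$, or to rearrange your estimate of $\mathrm{RHS}_n$ so that the Gronwall factor appears at level $n-1$.
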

\begin{proof}
From \eqref{StrongIneq-n} and (\ref{GD1}), one can deduce
\begin{eqnarray*}
&& \displaystyle \delta_t \Vert (\hat u_n,{\boldsymbol \sigma}_n)\Vert_{1}^2
 + \Vert (\widehat A \hat u_n, B{\boldsymbol \sigma}_n)\Vert_0^2 
 \leq 
  C\Vert (\hat u_{n-1},{\boldsymbol\sigma}_{n-1})\Vert^4_1
 +C\Vert (\hat u_{n-1},{\boldsymbol\sigma}_{n-1})\Vert^2_1 .
\end{eqnarray*}
Then, taking into account (\ref{weak01}),  
 Corollary \ref{CorUnif} can be applied  and  (\ref{strong01}) is deduced. 
\end{proof}

\subsection{Error estimates}
We will obtain error estimates for the scheme \textbf{US} with respect to a su\-ffi\-cien\-tly regular solution $(u,{\boldsymbol\sigma})$ of (\ref{modelf01}) and  $v$ of (\ref{modelf01eqv}). For any final time $T>0$, let us consider a fixed partition of $[0,T]$ given by $(t_n=nk)_{n=0}^N$, where $k=T/N>0$ is the time step. 
We will denote by   $C,C_i,K_i$ to different positive constants possibly  depending on the continuous solution $(u, v,{\boldsymbol \sigma}=\nabla v)$, but independent of time size $k$ and the length of the time interval $T$, giving the dependence of $T$  explicitly.
 We introduce the following notations for the errors in $t=t_{n}$:  
$$
e_u^n=u(t_n)-u_n,\quad e_{\boldsymbol\sigma}^n={\boldsymbol\sigma}(t_n)-{\boldsymbol\sigma}_n
\quad \hbox{and} \quad 
e_v^n=v(t_n)-v_n ,
$$
and for 
the discrete norms:
$$
\Vert  (e^n)\Vert^2_{l^\infty X} := \max_{n=1,\cdots,N} \| e^n\|^2_X,
\quad 
\Vert  (e^n)\Vert^2_{l^2 X} :=  k \, \sum_{n=1}^N 
 \| e^n\|^2_X.
$$

\subsubsection{Error estimates in weak norms for $(e_u^n,e_{{\boldsymbol \sigma}}^n)$}

Subtracting (\ref{modelf01}) at $t=t_n$ and the scheme \textbf{US}, then $(e_u^n,e_{{\boldsymbol \sigma}}^n)$ satisfies
\begin{equation}\label{erru}
\left(\delta_t e_u^n,\overline{u}\right) +  (\nabla e_u^n, \nabla\overline{u}) +(e_u^n{\boldsymbol \sigma}(t_n)+u_n e_{\boldsymbol \sigma}^n,\nabla \overline{u})=(\xi_1^n,\overline{u}), \ \ \forall \overline{u}\in H^1(\Omega),
\end{equation}
\begin{equation}\label{errs}
\left(\delta_t e_{\boldsymbol\sigma}^n,\overline{\boldsymbol \sigma}\right)+ \langle B
e_{\boldsymbol\sigma}^n, \overline{\boldsymbol \sigma}\rangle = 2(e_u^n\nabla u(t_n)+ u_n \nabla e_u^n ,\overline{\boldsymbol \sigma}) + (\xi_2^n,\overline{\boldsymbol \sigma}),\ \ \forall \overline{\boldsymbol \sigma}\in \H^1_{\sigma}(\Omega),
\end{equation}
where $\xi_1^n,\xi_2^n$ are the consistency errors: 
\begin{equation*}
(\xi_1^n,\xi_2^n)=\delta_t (u(t_n),{\boldsymbol\sigma}(t_n)) - (u_t(t_n),{\boldsymbol\sigma}_t(t_n))=-\displaystyle\frac{1}{k}\int_{t_{n-1}}^{t_n}(t-t_{n-1})(u_{tt}(t),{\boldsymbol \sigma}_{tt}(t)) dt.
\end{equation*}
\begin{tma}\label{erteo}
Let $(u_n,{\boldsymbol\sigma}_n)$ be a solution of the scheme \textbf{US} and assume the following regularity for the exact solution $(u,{\boldsymbol\sigma})$ of (\ref{modelf01}):
\begin{equation}\label{regU}
(u,{\boldsymbol\sigma}) \in L^{\infty}(0,+\infty;H^{1}(\Omega)\times \H^{1}(\Omega))  \ \ \mbox{and} \ \ (u_{tt},{\boldsymbol\sigma}_{tt})\in L^2(0,+\infty;H^1(\Omega)' \times \H^1_{\sigma}(\Omega)').
\end{equation}
Assuming that 
\begin{equation}\label{re01}
\displaystyle k\, \Vert (\nabla u, \nabla \cdot {\boldsymbol\sigma})
\Vert_{L^\infty(L^2)}^4 \quad \hbox{is small enough,}
\end{equation}
then the following error estimate holds
\begin{eqnarray}\label{priorierr}
\Vert  (e_{u}^n,e_{{\boldsymbol \sigma}}^n)\Vert^2_{l^\infty L^2\cap l^2 H^1}\leq 
K_1 T \exp(K_2T)\, k^2 .
\end{eqnarray}
\end{tma}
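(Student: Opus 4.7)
The plan is to run a discrete energy argument on the error system (\ref{erru})--(\ref{errs}) that mirrors the stability argument of Lemma~\ref{estinc1}, exploiting the same cancellation between the chemotaxis and production couplings. Concretely, I would take $\bar u = e_u^n$ in (\ref{erru}) and $\bar{\boldsymbol\sigma} = \frac{1}{2} e_{\boldsymbol\sigma}^n$ in (\ref{errs}), add the two identities, and observe that the terms $(u_n e_{\boldsymbol\sigma}^n, \nabla e_u^n)$ and $(u_n \nabla e_u^n, e_{\boldsymbol\sigma}^n)$ cancel. Using $a(a-b) = \tfrac12(a^2-b^2)+\tfrac12(a-b)^2$, this yields
\begin{equation*}
\delta_t\!\left(\tfrac12\|e_u^n\|_0^2 + \tfrac14\|e_{\boldsymbol\sigma}^n\|_0^2\right)
+ \tfrac{k}{2}\|\delta_t e_u^n\|_0^2 + \tfrac{k}{4}\|\delta_t e_{\boldsymbol\sigma}^n\|_0^2
+ \|\nabla e_u^n\|_0^2 + \tfrac12 \|e_{\boldsymbol\sigma}^n\|_1^2 = \mathcal{R}^n,
\end{equation*}
where $\mathcal{R}^n$ contains only the "non-cancelling" residual terms $-(e_u^n \boldsymbol\sigma(t_n),\nabla e_u^n) + (e_u^n \nabla u(t_n), e_{\boldsymbol\sigma}^n)$ and the consistency contributions $(\xi_1^n,e_u^n) + \tfrac12(\xi_2^n,e_{\boldsymbol\sigma}^n)$. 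Note that mass conservation (\ref{consu}) combined with $\int_\Omega u(t_n) = \int_\Omega u_0$ gives $\int_\Omega e_u^n = 0$, so the Poincar\'e--type inequality $\|e_u^n\|_1 \leq C\|\nabla e_u^n\|_0$ is available; this will be essential to interpolate $L^3$-norms in the convective residuals.

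For the residual nonlinear terms I would proceed in the 3D setting (the 2D bound is easier) using H\"older's inequality, the Sobolev embedding $H^1 \hookrightarrow L^6$, and the interpolation (\ref{in3D}) to estimate, e.g.,
\begin{equation*}
|(e_u^n \boldsymbol\sigma(t_n),\nabla e_u^n)| \leq \|e_u^n\|_{L^3}\|\boldsymbol\sigma(t_n)\|_{L^6}\|\nabla e_u^n\|_0 \leq \varepsilon \|\nabla e_u^n\|_0^2 + C_\varepsilon \|\boldsymbol\sigma(t_n)\|_1^4\|e_u^n\|_0^2,
\end{equation*}
and similarly $|(e_u^n \nabla u(t_n),e_{\boldsymbol\sigma}^n)| \leq \varepsilon\|\nabla e_u^n\|_0^2 + \varepsilon\|e_{\boldsymbol\sigma}^n\|_1^2 + C_\varepsilon \|\nabla u(t_n)\|_0^4\|e_u^n\|_0^2$. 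Choosing $\varepsilon$ small enough, these terms are absorbed into the dissipative $\|\nabla e_u^n\|_0^2 + \tfrac12\|e_{\boldsymbol\sigma}^n\|_1^2$ on the left, leaving a factor $\gamma_n := \|\boldsymbol\sigma(t_n)\|_1^4 + \|\nabla u(t_n)\|_0^4$ multiplying $\|e_u^n\|_0^2$. Under the regularity assumption (\ref{regU}) and recalling $\|\boldsymbol\sigma\|_1^2 \leq C(\|\boldsymbol\sigma\|_0^2 + \|\nabla\cdot\boldsymbol\sigma\|_0^2)$ for $\boldsymbol\sigma=\nabla v$ via (\ref{H1divGrad}), the sequence $\gamma_n$ is uniformly bounded on $[0,T]$. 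The consistency errors are controlled by the standard Taylor-type bound $\|\xi_i^n\|^2 \leq k \int_{t_{n-1}}^{t_n}\|u_{tt}(s)\|^2\,ds$ (and analogously for $\boldsymbol\sigma_{tt}$) in the dual norms $H^1_*(\Omega)'$ and $\H^1_\sigma(\Omega)'$, which, after summing in $n$, give $k\sum_n(\|\xi_1^n\|^2 + \|\xi_2^n\|^2) \leq C k^2$.

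Collecting the above yields, with $E^n := \tfrac12\|e_u^n\|_0^2 + \tfrac14\|e_{\boldsymbol\sigma}^n\|_0^2$,
\begin{equation*}
\delta_t E^n + c\bigl(\|\nabla e_u^n\|_0^2 + \|e_{\boldsymbol\sigma}^n\|_1^2\bigr) \leq C\gamma_n E^n + h^n,
\end{equation*}
with $k\sum_n h^n \leq C k^2$. This is an \emph{implicit} Gronwall inequality because the factor $\gamma_n$ multiplies $E^n$, not $E^{n-1}$; rewriting it as $(1 - kC\gamma_n)E^n \leq E^{n-1} + k h^n - kc(\ldots)$ and invoking the smallness hypothesis (\ref{re01}), which exactly ensures $kC\gamma_n \leq \tfrac12$, allows the step $1/(1-kC\gamma_n) \leq 1 + 2kC\gamma_n$. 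Then the classical discrete Gronwall lemma, together with $\sum_n k\gamma_n \leq T\|\gamma\|_{L^\infty} < \infty$ and $E^0 = 0$, gives $E^n \leq K_1 e^{K_2 T} k^2$ uniformly in $n \leq T/k$, and summing over $n$ also delivers the $l^2 H^1$ part of (\ref{priorierr}).

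The main obstacle is \emph{not} the algebraic cancellation nor the routine interpolation, but rather the fact that one of the residual convective terms forces a factor $\gamma_n$ in front of $E^n$ on the right-hand side. Turning that implicit inequality into a useful bound is precisely what the hypothesis (\ref{re01}) is designed for; without (\ref{re01}) one cannot invert $(1-kC\gamma_n)$ with a harmless constant and the discrete Gronwall argument fails. Everything else -- the Poincar\'e step for $e_u^n$ (which relies crucially on mass conservation of the scheme), the interpolation in 3D, and the Taylor bound on $\xi_1^n,\xi_2^n$ -- is standard once the cancellation has been used to eliminate the top-order chemotaxis coupling.
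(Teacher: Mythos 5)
Your proposal follows essentially the same route as the paper's proof: the same choice of test functions $\bar u=e_u^n$ and $\bar{\boldsymbol\sigma}=\frac12 e_{\boldsymbol\sigma}^n$ with the same cancellation of $(u_n e_{\boldsymbol\sigma}^n,\nabla e_u^n)$, the same zero-mean/Poincar\'e observation, the same interpolation bounds producing a factor times $\Vert e_u^n\Vert_0^2$, the same Taylor bound on $\xi_1^n,\xi_2^n$ in the dual norms, and the same resolution of the implicit Gronwall inequality by absorbing $Ck\gamma_n\Vert e_u^n\Vert_0^2$ into the left-hand side using the smallness hypothesis before applying the classical Discrete Gronwall Lemma. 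The only (harmless) deviation is that you estimate $(e_u^n{\boldsymbol\sigma}(t_n),\nabla e_u^n)$ directly via $L^3$--$L^6$--$L^2$ rather than integrating by parts, so your Gronwall factor carries $\Vert{\boldsymbol\sigma}(t_n)\Vert_1^4$ instead of $\Vert\nabla\cdot{\boldsymbol\sigma}(t_n)\Vert_0^4$ as in (\ref{re01}); under (\ref{regU}) this merely changes the constant in the smallness requirement.
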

\begin{proof}
Since $u_0=u(t_0)$, then $\int_\Omega e_u^{n}=\int_\Omega e_u^{0}=0$. Moreover, taking $\overline{u}=e_{u}^n$ in (\ref{erru}), $\overline{\boldsymbol
\sigma}=\displaystyle\frac{1}{2}e_{{\boldsymbol\sigma}}^n$ in (\ref{errs}), and adding the resulting expressions, the terms $(u_n e_{\boldsymbol \sigma}^n,\nabla e_{u}^n)$ cancel, and using the H\"older and Young inequalities and (\ref{in3D}), one obtains
\begin{eqnarray}\label{ester4}
&\delta_t&\!\!\!\!\!\left( \displaystyle \frac{1}{2} \Vert e_{u}^n\Vert_{0}^2 + \displaystyle \frac{1}{4} \Vert e_{{\boldsymbol \sigma}}^n\Vert_{0}^2 \right) 
+\frac{1}{2}\Vert  (e_{u}^n, e_{{\boldsymbol \sigma}}^n)\Vert_{1}^2  \nonumber\\
&&\!\!\!\!\!\!\!\!\!\!\!\!\!\!\!
\leq \displaystyle\frac{1}{4} \Vert (e_u^n,e_{{\boldsymbol \sigma}}^n)\Vert_{1}^2 
+C\Vert (\xi_1^n,\xi_2^n)\Vert^2_{(H^1)'\times(H_{\sigma}^1)'}
+C\Vert (\nabla u(t_n),\nabla\cdot {\boldsymbol\sigma}(t_n))\Vert_{0}^4\Vert e_{u}^n\Vert_{0}^2 .
\end{eqnarray}
Using the H\"older inequality, one can bound
\begin{equation}\label{error1}
C\Vert (\xi_1^n,\xi_2^n)\Vert_{(H^1)'\times(H_{\sigma}^1)'}^2\leq C k \int_{t_{n-1}}^{t_n}\Vert (u_{tt}(t),{\boldsymbol \sigma}_{tt}(t))\Vert_{(H^1)'\times(H_{\sigma}^1)'}^2dt.
\end{equation}
Therefore, from (\ref{ester4}) and (\ref{error1}),   
\begin{eqnarray}\label{error3}
&\delta_t&\!\!\!\!\!\left( \displaystyle \frac{1}{2} \Vert e_{u}^n\Vert_{0}^2 +\frac{1}{4} \Vert e_{{\boldsymbol \sigma}}^n\Vert_{0}^2  \right)+\frac{1}{4}\Vert (e_{u}^n, e_{{\boldsymbol \sigma}}^n)\Vert_{1}^2 \nonumber\\
&&\!\!\!\!\!\leq C k \int_{t_{n-1}}^{t_n}\Vert (u_{tt}(t),{\boldsymbol \sigma}_{tt}(t))\Vert_{(H^1)'\times(H_{\sigma}^1)'}^2 dt +C\Vert (\nabla u,\nabla\cdot{\boldsymbol\sigma})\Vert_{L^\infty(L^2)}^4\Vert e_{u}^n\Vert_{0}^2.
\end{eqnarray}
 Then, multiplying (\ref{error3}) by $k$ and adding from $n=1$ to $n=r$ (recall that   $e_{u}^0=e_{{\boldsymbol \sigma}}^0=0$):
\begin{eqnarray}\label{eserr01}
&&\displaystyle \left[\frac{1}{4}- Ck\Vert (\nabla u, \nabla\cdot{\boldsymbol\sigma})\Vert_{L^\infty L^2}^4\right]\Vert (e_{u}^r,e_{{\boldsymbol \sigma}}^r)\Vert_{0}^2+\frac{k}{4} \underset{n=1}{\overset{r}{\sum}}\Vert (e_{u}^n, e_{{\boldsymbol \sigma}}^n)\Vert_{1}^2\nonumber\\
&&\!\!\!\!\! \leq C k^2 \Vert (u_{tt},{\boldsymbol \sigma}_{tt})\Vert_{L^2((H^1)'\times(H_{\sigma}^1)')}^2 dt +C\Vert (\nabla u, \nabla\cdot{\boldsymbol\sigma})\Vert_{L^\infty L^2}^4k \underset{n=0}{\overset{r-1}{\sum}} \Vert (e_{u}^n,e_{{\boldsymbol \sigma}}^n)\Vert_{0}^2.
\end{eqnarray}
Therefore, by applying hypothesis (\ref{re01}) and taking into account (\ref{regU}), we can use the   discrete Gronwall Lemma (see \cite{Heywood}, p.~369)  in (\ref{eserr01})  concluding (\ref{priorierr}). 
\end{proof}

\begin{obs}
From (\ref{priorierr}), one deduces that $\Vert (u_n,{\boldsymbol\sigma}_n)\Vert_1^2 \leq  K_1 T\exp(K_2T)$, for all $n=1,...,N$.
In particular, this fact implies that in 3D domains, for finite time, the hypothesis (\ref{uniq01}) assuring the uniqueness of solution $(u_n,{\boldsymbol\sigma}_n)$ can be relaxed to $ k\,(K_1 T\exp(K_2T))^2$ small enough.
\end{obs}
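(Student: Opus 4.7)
The observation has two components: (i) extracting a uniform $H^1$ bound for $(u_n,\boldsymbol\sigma_n)$ from the already-established error estimate, and (ii) substituting this bound into the uniqueness hypothesis (\ref{uniq01}) of Theorem~\ref{USus}. The plan is essentially a triangle-inequality argument combined with an inspection of the $l^2 H^1$ piece of (\ref{priorierr}); no fresh energy estimate is required.

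First I would decompose $u_n = u(t_n) - e_u^n$ and $\boldsymbol\sigma_n = \boldsymbol\sigma(t_n) - e_{\boldsymbol\sigma}^n$ and apply the triangle inequality:
\begin{equation*}
\Vert (u_n,\boldsymbol\sigma_n)\Vert_1 \le \Vert (u(t_n),\boldsymbol\sigma(t_n))\Vert_1 + \Vert (e_u^n,e_{\boldsymbol\sigma}^n)\Vert_1.
\end{equation*}
The first term on the right is bounded by $\Vert (u,\boldsymbol\sigma)\Vert_{L^\infty(0,T;H^1\times \mathbf H^1)}$, which is finite by the regularity hypothesis (\ref{regU}). For the second term I would unpack the $l^2 H^1$ component of (\ref{priorierr}): from inequality (\ref{eserr01}) (after absorbing the Gronwall factor) one has, for every $r\le N$,
\begin{equation*}
k\sum_{n=1}^{r} \Vert (e_u^n,e_{\boldsymbol\sigma}^n)\Vert_1^2 \le C(T)^2\,k^2,
\end{equation*}
so in particular $\Vert (e_u^n,e_{\boldsymbol\sigma}^n)\Vert_1^2 \le C(T)^2\, k$ for each single $n$, and hence $\Vert (e_u^n,e_{\boldsymbol\sigma}^n)\Vert_1 \le C(T)\sqrt{k}$. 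Fixing $k\le k_0$ and redefining the constant $C(T)$ (now depending also on $k_0$ and on $\Vert(u,\boldsymbol\sigma)\Vert_{L^\infty H^1}$), the two pieces combine to give the desired uniform bound
\begin{equation*}
\Vert (u_n,\boldsymbol\sigma_n)\Vert_1 \le C(T),\qquad n=1,\dots,N.
\end{equation*}

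For the second assertion I would simply plug this a priori bound into hypothesis (\ref{uniq01}) of Theorem~\ref{USus}, which is the only place where $\Vert (u_n,\boldsymbol\sigma_n)\Vert_1$ enters the uniqueness condition. Since the error estimate of Theorem~\ref{erteo} applies to any solution of the scheme (it does not presuppose uniqueness), every candidate solution $(u_n,\boldsymbol\sigma_n)$ satisfies $\Vert (u_n,\boldsymbol\sigma_n)\Vert_1^4 \le C(T)^4$. Hence $k\Vert(u_n,\boldsymbol\sigma_n)\Vert_1^4 \le k\,C(T)^4$, and requiring the latter to be small enough is sufficient to guarantee uniqueness on $[0,T]$, as claimed.

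The only subtle point, and arguably the single thing worth verifying carefully, is the absence of circularity: one must check that the error estimate (\ref{priorierr}) was derived without using uniqueness of the discrete solution. Inspecting Theorem~\ref{erteo} confirms this — the hypothesis (\ref{re01}) is a smallness condition involving only the continuous solution $(u,\boldsymbol\sigma)$, and the proof manipulates the error equations (\ref{erru})--(\ref{errs}) for an arbitrary scheme solution $(u_n,\boldsymbol\sigma_n)$. So the argument is linear rather than circular, and no further technical obstacle arises.
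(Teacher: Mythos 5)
Your argument is correct and follows what is essentially the only natural route, which is the one the paper intends (the remark is stated without proof): extract the pointwise-in-$n$ bound $\Vert (e_u^n,e_{\boldsymbol\sigma}^n)\Vert_1\le C(T)\sqrt{k}$ from the $l^2H^1$ part of (\ref{priorierr}), combine with the $L^\infty(0,T;H^1\times\H^1)$ regularity of the exact solution via the triangle inequality, and insert the resulting uniform bound into (\ref{uniq01}). Your explicit check that Theorem \ref{erteo} applies to an arbitrary scheme solution, so that no circularity with the uniqueness claim arises, is a worthwhile point that the paper leaves implicit.
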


\subsubsection{Error estimates in strong norms for $v_n$}

Subtracting (\ref{modelf01eqv}) at $t=t_n$ and (\ref{edovf}), then $e_v^n$ satisfies
\begin{equation} \label{errv}
\ \ \delta_t e_v^n + A
e_v^n = (u(t_n)+u_n)e_u^n+ \xi_3^n\ \ \mbox{ a.e.~$\x \in \Omega$},
\end{equation}
where $\xi_3^n$ is the consistency error associated to (\ref{edovf}), that is, 
\begin{equation}\label{xi3}
\xi_3^n=\delta_t (v(t_n)) - v_t(t_n)=-\displaystyle\frac{1}{k}\int_{t_{n-1}}^{t_n} (t-t_{n-1})v_{tt}(t) dt.
\end{equation}

\begin{tma}\label{erteoV}
Under hypotheses of Theorem \ref{erteo}. Let $v_n$ be the solution of (\ref{edovf}) and assume the following regularity for the exact solution $v$ of (\ref{modelf01eqv}):
\begin{equation}\label{regV}
v_{tt}\in L^2(0,+\infty;H^1(\Omega)').
\end{equation}
Then the following error estimate holds
\begin{equation}\label{priorierr001}
\Vert  e_{v}^n\Vert^2_{l^\infty H^1\cap l^2 H^2}\leq K_3 T \exp(K_4T)\,  k^2 .
\end{equation}
\end{tma}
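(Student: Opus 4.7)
The approach is to test the error equation (\ref{errv}) pointwise in $L^2(\Omega)$ by $A e_v^n$. This pairing is admissible since both $v(t_n)$ and $v_n$ enjoy at least $H^2$-regularity: $v_n\in H^2(\Omega)$ by Remark \ref{pvr} together with the uniform bounds (\ref{msus01}), while $v(t_n)\in H^2(\Omega)$ follows from the strong-solution property of (\ref{modelf01eqv}) combined with the regularity (\ref{regU}) on $u$. The standard discrete identity $(a_n,a_n-a_{n-1})=\tfrac{1}{2}(|a_n|^2-|a_{n-1}|^2+|a_n-a_{n-1}|^2)$, applied with the $H^1$-inner product induced by $A$, yields
$$
(\delta_t e_v^n, A e_v^n) = \delta_t\!\Big(\tfrac{1}{2}\|e_v^n\|_1^2\Big) + \tfrac{k}{2}\|\delta_t e_v^n\|_1^2,
$$
and the $H^2$-regularity estimate (\ref{H2us})$_1$ gives $(Ae_v^n, Ae_v^n)=\|Ae_v^n\|_0^2\ge C^{-1}\|e_v^n\|_2^2$. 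These are precisely the two left-hand contributions needed to produce $l^\infty H^1$ and $l^2 H^2$ control of $e_v^n$.

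For the right-hand side, the reaction-type term is split by Cauchy--Schwarz and Young,
$$
\bigl((u(t_n)+u_n)e_u^n, A e_v^n\bigr) \le \varepsilon\|Ae_v^n\|_0^2 + C_\varepsilon\|u(t_n)+u_n\|_{L^\infty}^2\,\|e_u^n\|_0^2,
$$
where $\|u(t_n)\|_{L^\infty}+\|u_n\|_{L^\infty}$ is bounded uniformly by a constant depending only on $T$ and data, thanks to the regularity (\ref{regU}) of $u$ together with Theorem~\ref{mslem} and Sobolev embedding. The consistency-error term is handled by a duality estimate $\langle\xi_3^n, A e_v^n\rangle$, with $\xi_3^n$ controlled via the Taylor form (\ref{xi3}) and Hölder: $\|\xi_3^n\|^2_{\ast}\le k\!\int_{t_{n-1}}^{t_n}\!\|v_{tt}(t)\|_\ast^2\,dt$ in the appropriate norm, arranged so that summation in $n$ produces the desired $k^2$ scaling.

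Choosing $\varepsilon$ small enough to absorb the $\|Ae_v^n\|_0^2$ contributions into the left-hand side, multiplying by $k$ and summing from $n=1$ to $n=r$ (recalling $e_v^0=0$), we arrive at
$$
\tfrac{1}{2}\|e_v^r\|_1^2 + C^{-1}k\sum_{n=1}^r\|e_v^n\|_2^2 \le C(T)\,k\sum_{n=1}^r\|e_u^n\|_0^2 + Ck\sum_{n=1}^r\|\xi_3^n\|_\ast^2.
$$
By Theorem~\ref{erteo}, $k\sum_{n=1}^r\|e_u^n\|_0^2\le T\|e_u\|_{l^\infty L^2}^2\le TC(T)^2k^2$, while the consistency sum inherits a $k^2$ factor from the regularity of $v_{tt}$ via the Hölder bound above, and a discrete Gronwall argument yields (\ref{priorierr001}). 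The main technical obstacle is to reconcile the test function $Ae_v^n$ (needed to obtain $H^2$ control) with the weak assumed regularity $v_{tt}\in L^1(0,T;H^1(\Omega)')$: the resolution is to pair $\xi_3^n$ against $e_v^n$ in the duality $H^1\!-\!H^1{}'$ rather than against $Ae_v^n$, and to extract the $H^2$ estimate only from the Laplacian part while the $\xi_3^n$-term is absorbed at the $H^1$ level.
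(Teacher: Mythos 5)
There is a genuine gap. Your strategy of testing (\ref{errv}) by $A e_v^n$ is the one the paper reserves for the linear scheme \textbf{LC} (Theorem \ref{erteolinV}), and it is precisely why that theorem assumes the stronger regularity $v_{tt}\in L^2(0,T;L^2(\Omega))$ in (\ref{regVlin}). Under the hypothesis (\ref{regV}) actually stated here, $v_{tt}\in L^1(0,T;H^1(\Omega)')$, the consistency term cannot be handled the way you describe: once the test function $Ae_v^n$ is fixed, every term of the equation is paired with $Ae_v^n$, so the pairing $(\xi_3^n, Ae_v^n)$ requires either $\xi_3^n\in L^2(\Omega)$ (which needs $v_{tt}$ valued in $L^2(\Omega)$) or $Ae_v^n\in H^1(\Omega)$ (which needs $e_v^n\in H^3(\Omega)$); neither is available, and you cannot selectively re-pair that single term against $e_v^n$ in the $H^1$--$(H^1)'$ duality while keeping $Ae_v^n$ for the rest. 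Moreover, your bound $\Vert\xi_3^n\Vert_{\ast}^2\le k\int_{t_{n-1}}^{t_n}\Vert v_{tt}(t)\Vert_{\ast}^2\,dt$ uses the Cauchy--Schwarz inequality in time and hence requires $v_{tt}\in L^2$ in time, whereas only $L^1$ integrability is assumed. So the scheme of your estimate does not close under (\ref{regV}).

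The paper's proof avoids testing by $Ae_v^n$ altogether. For the scheme \textbf{US} one has ${\boldsymbol\sigma}_n=\nabla v_n$ (Lemma \ref{eqLEM}), hence $e_{\boldsymbol\sigma}^n=\nabla e_v^n$, and Theorem \ref{erteo} already gives $\Vert\nabla e_v^n\Vert_{l^\infty L^2\cap l^2 H^1}\le C(T)k$. By the norm equivalences (\ref{H1poin}) and (\ref{H1divGrad}), the asserted bound $\Vert e_v^n\Vert_{l^\infty H^1\cap l^2 H^2}\le C(T)k$ then reduces to controlling the mean value $\left\vert\int_\Omega e_v^n\right\vert$. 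This is obtained by integrating (\ref{errv}) over $\Omega$, i.e.\ testing with the constant $1$, which produces the scalar recursion (\ref{nler0b}); there the consistency term appears only as $\langle\xi_3^n,1\rangle$, which is bounded by $\int_{t_{n-1}}^{t_n}\Vert v_{tt}(t)\Vert_{(H^1)'}\,dt$ and summed in $n$ without any squares, so the weak hypothesis $v_{tt}\in L^1(0,T;H^1(\Omega)')$ suffices. If you want to keep your $Ae_v^n$ argument, you must strengthen (\ref{regV}) to (\ref{regVlin}); to prove the theorem as stated you need the reduction to the mean value.
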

\begin{proof}
Since $e_{\boldsymbol\sigma}^n=\nabla e_{v}^n$, taking into account (\ref{priorierr}), it suffices to prove 
\begin{equation}\label{nler0a}
\left(  \int_\Omega e^n_v \right)^2 \leq  K_3 T \exp(K_4T) \, k^2,\quad \forall\, n\ge 0.
\end{equation}
With this aim,  integrating (\ref{errv}) in $\Omega$,
\begin{equation} \label{nler0b}
\delta_t \left( \int_\Omega e_v^n\right)  + \int_\Omega 
e_v^n = \int_\Omega (u(t_n)+u_n)e_u^n+ \int_\Omega \xi_3^n .
\end{equation}
Multiplying (\ref{nler0b}) by $k \displaystyle\left(\int_\Omega e^n_v\right)$ and using (\ref{xi3}), one has
\begin{equation*} \label{nler0c}
(1+ \frac{k}{2}) \displaystyle\left( \int_\Omega e_v^n\right)^2  -\left( \int_\Omega 
e_v^{n-1} \right) ^2
 \leq C\, k \,\Vert u(t_n)+u_n\Vert_0^2 \Vert e_u^n\Vert_0^2 
 + C\, k^2 \,\vert\Omega\vert \int_{t_{n-1}}^{t_n} \Vert v_{tt}(t) \Vert^2_{(H^1)'}.
\end{equation*}
Then, adding from $n=1$ to $n=r$ and taking into account that $u(t_n)+u_n$ is bounded in $l^\infty L^2$, we obtain (recall that $e_{v}^0=0$)
\begin{equation}\label{nler0d}
 \displaystyle\left( \int_\Omega e_v^r\right)^2+k \underset{n=1}{\overset{r}{\sum}}\displaystyle\left( \int_\Omega e_v^n\right)^2 \leq C \, k^2 \Vert v_{tt}(t)\Vert_{L^2(H^1)'} ^2+C\, k \underset{n=1}{\overset{r}{\sum}} \Vert e_{u}^n\Vert_{0}^2.
\end{equation}
Thus  (\ref{nler0a}) is deduced, using (\ref{regV}) and (\ref{priorierr})  in (\ref{nler0d}).
\end{proof}

\section{A linear scheme}\label{LINES}
In this section, we propose the following first-order in time, linear coupled scheme for  (\ref{modelf01}): 
\begin{itemize} 
\item{\underline{\emph{Scheme \textbf{LC}}:}\\
{\bf Initialization:} 
We take $(u_0,v_0)=(u(0),v(0))$ and  ${\boldsymbol\sigma}_0=\nabla v_0$.
\\
{\bf Time step} n: Given $(u_{n-1},{\boldsymbol \sigma}_{n-1})\in  H^{1}(\Omega)\times \H^{1}_{\sigma}(\Omega)$, compute $(u_{n},{\boldsymbol \sigma}_{n})\in  H^{1}(\Omega)\times \H^{1}_{\sigma}(\Omega)$ solving
\begin{equation}  \label{modelflin02}
\left\{
\begin{array}
[c]{lll}%
(\delta_t u_n,\overline{u}) + (\nabla u_n, \nabla\overline{u}) = -(u_{n-1}{\boldsymbol \sigma}_n,\nabla \overline{u}), \ \ \forall \overline{u}\in H^{1}(\Omega),\\
(\delta_t {\boldsymbol \sigma}_n,\overline{\boldsymbol \sigma}) +
\langle B {\boldsymbol \sigma}_n, \overline{\boldsymbol
\sigma}\rangle =
2(u_{n-1}\nabla u_n,\overline{\boldsymbol \sigma}),\ \ \forall
\overline{\boldsymbol \sigma}\in \H^{1}_{\sigma}(\Omega).
\end{array}
\right.  
\end{equation}}
\end{itemize}
Once solved  the scheme \textbf{LC}, given $v_{n-1}\in {H}^{2}(\Omega)$ with $v_{n-1}\geq 0$, 
then  $v_n=v_n(u_n^2)\in H^2(\Omega)$ can be recovered by solving (\ref{edovf}). 

\subsection{Conservation, unconditional well-posedness and energy-stability}
First of all, notice that scheme \textbf{LC}  is also $u$-conservative  (satisfying \eqref{consu}).
\begin{tma} {\bf(Unconditional well-posedness)} \label{linsc1}
There exists a unique $(u_n,{\boldsymbol\sigma}_n)$ solution of the scheme \textbf{LC}.
\end{tma}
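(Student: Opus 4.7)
My plan is to recast the scheme \textbf{LC} as a single variational problem on the product Hilbert space $\mathcal{V}:=H^{1}(\Omega)\times \H^{1}_{\sigma}(\Omega)$ and apply the Lax--Milgram theorem. With $u_{n-1}\in H^1(\Omega)$ and ${\boldsymbol\sigma}_{n-1}\in \H^{1}_{\sigma}(\Omega)$ fixed, I weight the second equation by $1/2$ and define, for $(u,{\boldsymbol\sigma}),(\bar u,\bar{\boldsymbol\sigma})\in\mathcal{V}$, the bilinear form
\begin{equation*}
\begin{array}{ll}
a((u,{\boldsymbol\sigma}),(\bar u,\bar{\boldsymbol\sigma}))=&\displaystyle\frac{1}{k}(u,\bar u)+(\nabla u,\nabla\bar u)+(u_{n-1}{\boldsymbol\sigma},\nabla\bar u) \\[1ex]
& +\displaystyle\frac{1}{2k}({\boldsymbol\sigma},\bar{\boldsymbol\sigma})+\frac{1}{2}\langle B{\boldsymbol\sigma},\bar{\boldsymbol\sigma}\rangle-(u_{n-1}\nabla u,\bar{\boldsymbol\sigma}),
\end{array}
\end{equation*}
and the linear form $\ell(\bar u,\bar{\boldsymbol\sigma})=\frac{1}{k}(u_{n-1},\bar u)+\frac{1}{2k}({\boldsymbol\sigma}_{n-1},\bar{\boldsymbol\sigma})$. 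A pair $(u_n,{\boldsymbol\sigma}_n)$ solves the scheme \textbf{LC} if and only if $a((u_n,{\boldsymbol\sigma}_n),(\bar u,\bar{\boldsymbol\sigma}))=\ell(\bar u,\bar{\boldsymbol\sigma})$ for all $(\bar u,\bar{\boldsymbol\sigma})\in\mathcal{V}$ (the factor $1/2$ just rescales the second equation and is harmless).

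The key observation is that testing with $(\bar u,\bar{\boldsymbol\sigma})=(u,{\boldsymbol\sigma})$ makes the two coupling terms $(u_{n-1}{\boldsymbol\sigma},\nabla u)$ and $-(u_{n-1}\nabla u,{\boldsymbol\sigma})$ cancel exactly, yielding
\begin{equation*}
a((u,{\boldsymbol\sigma}),(u,{\boldsymbol\sigma}))=\frac{1}{k}\|u\|_0^{2}+\|\nabla u\|_0^{2}+\frac{1}{2k}\|{\boldsymbol\sigma}\|_0^{2}+\frac{1}{2}\|{\boldsymbol\sigma}\|_1^{2}.
\end{equation*}
Using the equivalent norm (\ref{H1poin}), together with the bound $(\int_\Omega u)^{2}\leq|\Omega|\,\|u\|_{0}^{2}$, this controls $\|u\|_1^2+\|{\boldsymbol\sigma}\|_1^2$ from below (with a constant depending on $k$ and $|\Omega|$, which is acceptable since $k>0$ is fixed), so $a$ is coercive on $\mathcal{V}$.

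For continuity, only the convection-type terms require attention; for these I estimate, using $u_{n-1}\in H^{1}(\Omega)\hookrightarrow L^{6}(\Omega)$ and ${\boldsymbol\sigma}\in \H^{1}(\Omega)\hookrightarrow \L^{3}(\Omega)$,
\begin{equation*}
|(u_{n-1}{\boldsymbol\sigma},\nabla\bar u)|\leq\|u_{n-1}\|_{L^{6}}\|{\boldsymbol\sigma}\|_{L^{3}}\|\nabla\bar u\|_{0}\leq C\|u_{n-1}\|_{1}\|{\boldsymbol\sigma}\|_{1}\|\bar u\|_{1},
\end{equation*}
and analogously for $(u_{n-1}\nabla u,\bar{\boldsymbol\sigma})$. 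The remaining terms of $a$ are obviously continuous on $\mathcal{V}$, and $\ell$ is continuous since $(u_{n-1},{\boldsymbol\sigma}_{n-1})\in L^{2}(\Omega)\times \L^{2}(\Omega)$. Thus the Lax--Milgram theorem delivers a unique $(u_n,{\boldsymbol\sigma}_n)\in\mathcal{V}$ solving the scheme, with no restriction on $k$.

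The only step that needs more than routine bookkeeping is locating a test function that cancels the non-symmetric coupling; everything else is continuity plus coercivity. I expect the main (mild) subtlety is to justify that the coercivity constant, although it depends on $k$ and $|\Omega|$, is strictly positive for any admissible $k>0$, which is precisely what is needed for unconditional solvability.
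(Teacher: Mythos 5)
Your proposal is correct and follows essentially the same route as the paper: both recast the scheme as a single variational problem on $H^{1}(\Omega)\times \H^{1}_{\sigma}(\Omega)$, rescale one of the equations so that the two coupling terms cancel on the diagonal, and then apply Lax--Milgram (the paper multiplies the first equation by $2$ where you divide the second by $2$, which is the same normalization up to a global factor). The coercivity and continuity estimates you give, including the use of the equivalent norm (\ref{H1poin}) and the $L^{6}\times L^{3}\times L^{2}$ H\"older bound for the coupling terms, match the paper's argument.
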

\begin{proof}
Let $(u_{n-1},{\boldsymbol
\sigma}_{n-1})\in  \mathbb{H} := H^1(\Omega)\times  \H^{1}_{\sigma}(\Omega)$ be given, and consider the following bilinear form ${a}:\mathbb{H}\times \mathbb{H}\rightarrow \mathbb{R}$, and the linear operator $l:\mathbb{H}\rightarrow
\mathbb{R}$ given by
\begin{eqnarray*}
&{a}((u_n,{\boldsymbol\sigma}_n),(\overline{u},\overline{\boldsymbol\sigma}))&\!\!\!=\displaystyle\frac{2}{k} (u_n,\overline{u})+\displaystyle\frac{1}{k} ({\boldsymbol\sigma}_n,\overline{{\boldsymbol\sigma}})+2(\nabla u_n, \nabla\overline{u}) +\langle B {\boldsymbol\sigma}_n,  \overline{\boldsymbol\sigma}\rangle \nonumber\\
&&+ 2(u_{n-1}{\boldsymbol\sigma}_n,\nabla \overline{u}) - 2(u_{n-1}\nabla u_n,\overline{\boldsymbol\sigma}), 
\end{eqnarray*}
$$l((\overline{u},\overline{\boldsymbol\sigma}))= \frac{2}{k}(
u_{n-1}, \overline{u}) + \frac{1}{k}({\boldsymbol\sigma}_{n-1}, \overline{{\boldsymbol\sigma}}),$$
for all $(u_n,{\boldsymbol\sigma}_n),(\overline{u},\overline{\boldsymbol\sigma})\in \mathbb{H} $. Then, using the H\"older inequality and Sobolev embeddings, we can verify that $a(\cdot,\cdot)$ is continuous and coercive on $\mathbb{H}$, and  $l\in  \mathbb{H}'$. Thus, from Lax-Milgram theorem, there exists a unique $(u_n,{\boldsymbol \sigma}_n)\in \mathbb{H}$ such that 
$$a((u_n,{\boldsymbol\sigma}_n),(\overline{u},\overline{\boldsymbol\sigma}))=l((\overline{u},\overline{\boldsymbol\sigma})), \ \ \forall (\overline{u},\overline{\boldsymbol\sigma})\in \mathbb{H}.$$
By taking alternatively $\overline{\boldsymbol\sigma}=0$ and  $\overline{u}=0$, one has that $(u_n,{\boldsymbol \sigma}_n)$ is the unique 
solution of (\ref{modelflin02}). 
\end{proof}

Moreover, following the proof of Lemma \ref{estinc1}, the unconditional energy-stability of  the scheme \textbf{LC} can be proved.
\begin{lem} {\bf(Unconditional energy-stability)} \label{LINEALes}
The scheme \textbf{LC} is unconditionally energy-stable with respect to $\mathcal{E}(u,{\boldsymbol \sigma})$. In fact,  the  discrete energy law \eqref{lawenerfydisce} holds.
\end{lem}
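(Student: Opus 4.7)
The plan is to mimic exactly the proof of Lemma \ref{estinc1}, taking advantage of the fact that, although the scheme \textbf{LC} evaluates the coupling coefficients at $u_{n-1}$ rather than at $u_n$, the relevant antisymmetric cancellation is still available.

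First, I would test the first equation of (\ref{modelflin02}) with $\bar{u}=u_n$ and the second with $\bar{\boldsymbol\sigma}=\tfrac12{\boldsymbol\sigma}_n$, and then add the two identities. The right-hand sides combine into
\begin{equation*}
-(u_{n-1}{\boldsymbol\sigma}_n,\nabla u_n)+(u_{n-1}\nabla u_n,{\boldsymbol\sigma}_n)=0,
\end{equation*}
which is the crucial observation: because $u_{n-1}$ appears in \emph{both} the chemotaxis and the production terms, the two contributions cancel pointwise without the need for any integration by parts or sign hypothesis on $u_{n-1}$. This is precisely where the choice of coupling ($u_{n-1}{\boldsymbol\sigma}_n$ in one equation and $u_{n-1}\nabla u_n$ in the other) pays off and makes the scheme energy-stable despite being linear.

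Next I would convert the remaining terms into the stated form. For the time-derivative terms I would invoke the elementary identity
\begin{equation*}
a(a-b)=\tfrac12(a^2-b^2)+\tfrac12(a-b)^2,
\end{equation*}
applied to $(\delta_t u_n,u_n)$ and $\tfrac12(\delta_t{\boldsymbol\sigma}_n,{\boldsymbol\sigma}_n)$, which produces the discrete derivative of $\mathcal{E}(u_n,{\boldsymbol\sigma}_n)$ together with the two first-order numerical dissipations $\tfrac{k}{2}\Vert\delta_t u_n\Vert_0^2$ and $\tfrac{k}{4}\Vert\delta_t{\boldsymbol\sigma}_n\Vert_0^2$. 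For the diffusion terms, I would use $\langle B{\boldsymbol\sigma}_n,{\boldsymbol\sigma}_n\rangle=\Vert{\boldsymbol\sigma}_n\Vert_1^2$, which is just the definition of the equivalent norm (\ref{H1div}) on $\H^1_{\sigma}(\Omega)$, so that the $B$-term contributes $\tfrac12\Vert{\boldsymbol\sigma}_n\Vert_1^2$ and the Laplacian term contributes $\Vert\nabla u_n\Vert_0^2$. Collecting everything yields exactly the identity (\ref{lawenerfydisce}).

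There is essentially no obstacle: the argument is linear-algebraic and relies only on the symmetry that was built into the scheme by the authors' design choice to use $u_{n-1}$ in both coupling terms. The only thing worth emphasizing, which I would state as a short remark after the proof, is that this semi-implicit linearization was crafted precisely so that the same energy identity as the nonlinear \textbf{US} scheme holds verbatim, with the same two numerical dissipation contributions and no additional error terms.
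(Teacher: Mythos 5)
Your proof is correct and follows exactly the paper's argument: the paper proves this lemma by noting that the proof of Lemma \ref{estinc1} carries over verbatim, i.e.~testing with $\bar{u}=u_n$ and $\bar{\boldsymbol\sigma}=\tfrac12{\boldsymbol\sigma}_n$ so that the chemotaxis and production terms cancel, which is precisely what you do. The only observation you add beyond the paper is the explicit remark that placing $u_{n-1}$ in both coupling terms is what preserves the cancellation in the linearized scheme, which is accurate but not a different method.
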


\begin{obs}
By considering a fully discrete Finite Element scheme associated to \textbf{LC}, 
then its unconditional well-posedness and energy-stability 
can be proved following line by line  Theorem~\ref{linsc1} and Lemma~\ref{LINEALes}, respectively.
\end{obs}

\begin{obs}
 Uniform weak estimates for the solution $(u_n,{\boldsymbol\sigma}_n)$ of the scheme \textbf{LC} can be proved analogously to Theorem \ref{welem}. Moreover, assuming the $H^2$-regularity of problem (\ref{B201}) (and even in the case that the right hand side is not the gradient of a function),   strong and more regular uniform estimates for $(u_n,{\boldsymbol\sigma}_n)$ can be deduced as in Subsections \ref{UES} and \ref{UES-bis}. 
\end{obs}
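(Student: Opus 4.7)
The plan is to mimic the proof of Lemma \ref{estinc1}: test the momentum-type equations with the natural energy multipliers, and verify that the linearization by lagging $u_{n-1}$ in the chemotaxis and production terms still preserves the exact cancellation that drove the nonlinear argument. Specifically, I would take $\bar{u}=u_n$ in (\ref{modelflin02})$_1$ and $\bar{\boldsymbol\sigma}=\frac{1}{2}{\boldsymbol\sigma}_n$ in (\ref{modelflin02})$_2$, and then add the two resulting identities.

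For the time-derivative terms I would use the standard discrete identity $a(a-b)=\frac{1}{2}(a^2-b^2)+\frac{1}{2}(a-b)^2$, which yields
\begin{equation*}
(\delta_t u_n,u_n)=\delta_t\!\left(\tfrac{1}{2}\Vert u_n\Vert_0^2\right)+\tfrac{k}{2}\Vert \delta_t u_n\Vert_0^2,\qquad \tfrac{1}{2}(\delta_t {\boldsymbol\sigma}_n,{\boldsymbol\sigma}_n)=\delta_t\!\left(\tfrac{1}{4}\Vert {\boldsymbol\sigma}_n\Vert_0^2\right)+\tfrac{k}{4}\Vert \delta_t {\boldsymbol\sigma}_n\Vert_0^2.
\end{equation*}
For the diffusion/elliptic terms I would use $(\nabla u_n,\nabla u_n)=\Vert\nabla u_n\Vert_0^2$ and, by the definition of the operator $B$ together with the equivalent norm (\ref{H1div}), $\tfrac{1}{2}\langle B {\boldsymbol\sigma}_n,{\boldsymbol\sigma}_n\rangle=\tfrac{1}{2}\Vert {\boldsymbol\sigma}_n\Vert_1^2$. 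Summing all these contributions reproduces exactly the left-hand side of (\ref{lawenerfydisce}).

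The crucial (and only nontrivial) point is the cancellation of the coupling terms on the right-hand sides. With the chosen test functions, the chemotaxis contribution in (\ref{modelflin02})$_1$ becomes $-(u_{n-1}{\boldsymbol\sigma}_n,\nabla u_n)$ and the production contribution in (\ref{modelflin02})$_2$ becomes $+(u_{n-1}\nabla u_n,{\boldsymbol\sigma}_n)$; since both equal the same integral $\int_\Omega u_{n-1}\,{\boldsymbol\sigma}_n\cdot\nabla u_n$ (with opposite signs), they cancel without any sign hypothesis on $u_{n-1}$ and without requiring $u_{n-1}\ge 0$. This is precisely why lagging $u$ in both the chemotaxis and the production terms with the same index $n-1$ is the right linearization: it preserves the skew-symmetric structure that made the nonlinear scheme energy-stable, unlike other natural linearizations such as lagging only one occurrence.

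I do not expect any genuine obstacle; the argument is one display after the substitutions, and no inequality, interpolation, or smallness of $k$ is needed, which gives the unconditional character of the estimate. The only aspect to state carefully is that the identity holds for \emph{any} solution $(u_n,{\boldsymbol\sigma}_n)$, whose existence and uniqueness are already guaranteed by Theorem \ref{linsc1}, so (\ref{lawenerfydisce}) follows unconditionally and in turn implies (\ref{stabf02}) in the sense of Definition \ref{enesf}.
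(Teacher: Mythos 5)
Your computation of the discrete energy law for the scheme \textbf{LC} is correct: testing (\ref{modelflin02}) with $\bar u = u_n$ and $\bar{\boldsymbol\sigma}=\frac12{\boldsymbol\sigma}_n$ does make the lagged chemotaxis and production terms cancel exactly, with no sign condition on $u_{n-1}$, and this yields (\ref{lawenerfydisce}) and hence (\ref{stabf02}). From that law, summing over $m$ and using the conservation property (\ref{consu}) (which holds for \textbf{LC} upon taking $\bar u =1$) together with the Poincar\'e inequality for the zero-mean function $u_m-m_0$, one recovers the weak estimates (\ref{weak01})--(\ref{weak02}) exactly as in Theorem \ref{welem}. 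So the first assertion of the remark is covered, although what you have literally written is the proof of Lemma \ref{LINEALes}; you should at least record the summation step that converts the energy law into the uniform weak estimates.

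The genuine gap is that you do not address the second assertion at all: the strong and more regular estimates of Subsection \ref{UES}. This is where the linearization actually costs something. In Proposition \ref{regUS} the $\H^2$-regularity of ${\boldsymbol\sigma}_n$ is obtained by writing $B{\boldsymbol\sigma}_n = -\delta_t{\boldsymbol\sigma}_n + 2u_n\nabla u_n = \nabla(-\delta_t v_n + u_n^2)$ and invoking Lemma \ref{lemH2s01}, which only covers right-hand sides of gradient form. For \textbf{LC} the corresponding right-hand side is $-\delta_t{\boldsymbol\sigma}_n + 2u_{n-1}\nabla u_n$, which is not a gradient (the identity ${\boldsymbol\sigma}_n=\nabla v_n$ is no longer available), so one must \emph{assume} the $H^2$-regularity of problem (\ref{B201}) for general data --- precisely the hypothesis the remark singles out and which your argument never uses. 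Beyond that, the strong estimates of Theorem \ref{stlem} come from testing with $(\delta_t u_n,\delta_t{\boldsymbol\sigma}_n)$, where no cancellation occurs and one must bound $\Vert\nabla\cdot(u_{n-1}{\boldsymbol\sigma}_n)\Vert_0^2$ and $\Vert u_{n-1}\nabla u_n\Vert_0^2$ with mixed time indices via (\ref{H2est1}); none of this is ``one display after the substitutions''. Your proposal therefore establishes only the easy half of the remark.
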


\begin{obs}
The positivity of $u_n$ in the scheme \textbf{LC} is not clear. In fact, in the numerical simulations of Subsection \ref{sim} below, very negative cell densities are obtained even when $h\rightarrow 0$, where $h>0$ is the spatial mesh size. 
\end{obs}

\subsection{Error estimates}
\begin{tma}\label{erteolin}{\bf (Error estimates in weak norms for $(u_n,{\boldsymbol\sigma}_n)$).}
Let $(u_n,{\boldsymbol\sigma}_n)$ be the solution of  the scheme \textbf{LC}, and assume the regularity (\ref{regU}). Then, the a priori error estimate (\ref{priorierr}) holds.  
\end{tma}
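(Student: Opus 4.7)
The plan is to adapt the proof of Theorem \ref{erteo} to the linear scheme, isolating two structural differences that arise from the explicit evaluation at $t_{n-1}$ in the chemotaxis and production terms. First I would subtract (\ref{modelflin02}) from (\ref{modelf01}) at $t=t_n$, obtaining error equations analogous to (\ref{erru})--(\ref{errs}), in which the nonlinear residuals $u(t_n){\boldsymbol\sigma}(t_n)-u_{n-1}{\boldsymbol\sigma}_n$ and $u(t_n)\nabla u(t_n)-u_{n-1}\nabla u_n$ appear. Setting $\eta_u^n:=u(t_n)-u(t_{n-1})=\int_{t_{n-1}}^{t_n}u_t(s)\,ds$, the key decomposition is
$$u(t_n){\boldsymbol\sigma}(t_n)-u_{n-1}{\boldsymbol\sigma}_n=(e_u^{n-1}+\eta_u^n){\boldsymbol\sigma}(t_n)+u_{n-1}e_{\boldsymbol\sigma}^n,$$
and analogously $u(t_n)\nabla u(t_n)-u_{n-1}\nabla u_n=(e_u^{n-1}+\eta_u^n)\nabla u(t_n)+u_{n-1}\nabla e_u^n$.

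Testing the two error equations by $\bar u=e_u^n$ and $\bar{\boldsymbol\sigma}=\tfrac{1}{2}e_{\boldsymbol\sigma}^n$ and adding, the cross terms $(u_{n-1}e_{\boldsymbol\sigma}^n,\nabla e_u^n)-(u_{n-1}\nabla e_u^n,e_{\boldsymbol\sigma}^n)$ cancel exactly as in Theorem \ref{erteo}. The remaining ``linearization'' contributions involving $e_u^{n-1}$ and $\eta_u^n$ are controlled by H\"older, Young and the $3D$ interpolation (\ref{in3D}); for instance,
$$|(e_u^{n-1}{\boldsymbol\sigma}(t_n),\nabla e_u^n)|\leq C\,\Vert e_u^{n-1}\Vert_0^{1/2}\Vert e_u^{n-1}\Vert_1^{1/2}\Vert {\boldsymbol\sigma}(t_n)\Vert_1\,\Vert\nabla e_u^n\Vert_0,$$
which is absorbed partly by the diffusive terms $\tfrac{1}{2}\Vert(e_u^n,e_{\boldsymbol\sigma}^n)\Vert_1^2+\varepsilon\Vert e_u^{n-1}\Vert_1^2$ and partly by a Gronwall-type contribution $C\Vert{\boldsymbol\sigma}\Vert_{L^\infty H^1}^4\Vert e_u^{n-1}\Vert_0^2$. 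The consistency terms $(\xi_1^n,e_u^n)+\tfrac{1}{2}(\xi_2^n,e_{\boldsymbol\sigma}^n)$ are handled verbatim as in (\ref{error1}) using the regularity (\ref{regU}).

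The decisive observation---and the reason why no smallness restriction analogous to (\ref{re01}) appears in the statement---is that all absorbable ``mass'' terms involve $\Vert e_u^{n-1}\Vert_0^2$ rather than $\Vert e_u^n\Vert_0^2$. Hence, after multiplication by $k$ and summation from $n=1$ to $r$, one obtains $k\sum_{n=0}^{r-1}\Vert e_u^n\Vert_0^2$ on the right-hand side (using $e_u^0=0$), which is directly suited to the classical discrete Gronwall lemma; likewise the $\varepsilon\Vert e_u^{n-1}\Vert_1^2$ term is absorbed into the left-hand-side sum $\tfrac{k}{4}\sum_{n=1}^r\Vert e_u^n\Vert_1^2$ by shifting the index.

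The main subtlety of the plan is ensuring that the $\eta_u^n$ contributions are genuinely of order $k^2$ after summation. For this, the H\"older-in-time bound $\Vert\eta_u^n\Vert_X^2\leq k\int_{t_{n-1}}^{t_n}\Vert u_t(s)\Vert_X^2\,ds$ in an appropriate norm $X$ gives $k\sum_{n=1}^r\Vert\eta_u^n\Vert_X^2\leq Ck^2$, provided sufficient regularity of $u_t$, which follows from (\ref{regU}) combined with the equation. Once this is in place, applying the discrete Gronwall lemma yields (\ref{priorierr}) with $C(T)=K_1\exp(K_2T)$, $K_1,K_2>0$ independent of $k$.
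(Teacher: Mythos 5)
Your proposal is correct and follows essentially the same route as the paper, whose proof of this theorem is a one-line deferral to Theorem \ref{erteo}: the decomposition $(e_u^{n-1}+\eta_u^n){\boldsymbol\sigma}(t_n)+u_{n-1}e_{\boldsymbol\sigma}^n$, the cancellation of the $u_{n-1}$ cross terms, and the observation that the Gronwall contributions carry $\Vert e_u^{n-1}\Vert_0^2$ (so no smallness condition like (\ref{re01}) is needed) are exactly the points the authors invoke. The only caveat is your claim that the $O(k^2)$ bound on the $\eta_u^n$ contributions ``follows from (\ref{regU}) combined with the equation'': strictly, one needs $u_t$ in a positive-order spatial norm such as $L^2(0,T;L^3(\Omega))$, which (\ref{regU}) alone does not yield, but which the paper supplies through the higher regularity established in Section 3 (e.g.\ $\partial_t u\in L^2(0,+\infty;H^1(\Omega))$), so this matches the paper's own (implicit) level of rigor.
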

\begin{proof}
The proof follows the same steps of Theorem \ref{erteo}. 
But, in this case, the hypothesis of small time step (\ref{re01}) used in order to apply the Discrete Gronwall Lemma is not needed, since  the ``semi-explicit'' form of the terms $(u_{n-1}{\boldsymbol \sigma}_n,\nabla \overline{u})$ and $(u_{n-1}\nabla u_n,\overline{\boldsymbol \sigma})$ allow to bound them in a suitable way. 
\end{proof}

Moreover, although in this linear scheme \textbf{LC}  the relation ${\boldsymbol\sigma}_n=\nabla v_n$ is not clear, it will be possible to obtain error estimates for $v_n$.
\begin{tma}\label{erteolinV}{\bf (Error estimates in strong norms for $v_n$).}
Under hypothesis of Theorem \ref{erteolin}. Let $v_n$ be the solution of (\ref{edovf}) (corresponding to the scheme \textbf{LC}), and assume the regularity: 
\begin{equation}\label{regVlin}
v_{tt}\in L^2(0,+\infty;L^2(\Omega)).
\end{equation}
Then, the a priori error estimate 
holds:
\begin{equation} \label{priorierr001vis}
\Vert  e_{v}^n\Vert^2_{l^\infty H^1\cap l^2 H^2}\leq  C_1(T)\, k^2 + C_2(T)^{3/2} k^{5/2}  
\end{equation}
where $C_i(T)=K_1 T \exp(K_2T)$.
\end{tma}
\begin{proof}
Since in the scheme \textbf{LC}  the relation ${\boldsymbol\sigma}_n=\nabla v_n$ is not clear, we will argue in a different way of Theorem \ref{erteoV}. 
Indeed,  testing (\ref{errv}) by $A e^n_v$, 
and using the H\"older and Young inequalities,  one obtains
\begin{equation}\label{newer002}
\displaystyle\frac{1}{2} \delta_t \left( \Vert e^n_v \Vert_1^2\right) 
+ \frac{1}{2}\Vert A e^n_v\Vert_0^2 \leq  C \,\Vert u(t_n) + u_n\Vert_{L^3}^2 \Vert e^n_u\Vert_1^2  + C\,\Vert \xi_3^n \Vert_0^2. 
\end{equation}
Observe that from (\ref{priorierr})  it follows that 
$\underset{n=1}{\overset{r}{\sum}} \Vert u(t_n) - u_n\Vert_1^2\leq C(T) k$ with $C(T)=K_1 T \exp(K_2 T))$, 
which implies using (\ref{regU}) that 
\begin{equation}\label{new00av}
\Vert u_n\Vert_1^2\leq C(T)\, k + \Vert u(t_n)\Vert_1^2 
\le C(T)\, k  + C.
\end{equation}
In particular, using \eqref{weak01}  and (\ref{ssa-n}), 
$$
\Vert u_n\Vert_{L^3}^2
\le C\, \Vert u_n\Vert_{0} \Vert u_n\Vert_{1}
\le C(T)^{1/2} k^{1/2}+C.
$$
 Then, multiplying (\ref{newer002}) by $k$, adding from $n=1$ to $n=r$ and using (\ref{new00av}) (recall that $e_{v}^0=0$),
\begin{eqnarray}\label{newer003}
\Vert e_{v}^r\Vert_{1}^2+k \underset{n=1}{\overset{r}{\sum}}\Vert A e^n_v \Vert_{0}^2
\leq \left(  C(T)^{1/2} k^{1/2} +C\right)k \underset{n=1}{\overset{r}{\sum}} \Vert e^n_u\Vert_1^2 + C k^2 \Vert v_{tt}\Vert_{L^2(L^2)}^2.
\end{eqnarray}
Therefore, using  (\ref{regVlin}) and (\ref{priorierr}) in (\ref{newer003}), 
 the a priori error estimate (\ref{priorierr001vis}) is deduced. 
\end{proof}

\section{Numerical simulations}
In this section some numerical si\-mu\-lations  of the schemes described in this paper will be compared.  One considers a Galerkin finite element discretization in space associated to the variational formulation of schemes \textbf{US}, \textbf{LC} and \textbf{UV}, where the $\mathbb{P}_1$-continuous approximation is taken  for $u_h$, ${\boldsymbol\sigma}_h$ and $v_h$ (where $h$ is the spatial parameter). 
In fact, the domain $\Omega=(0,2)^2$ has been discretized by using a structured mesh, and all the simulations  have been carried out using $\textbf{FreeFem++}$ software. The nonlinear schemes \textbf{US} and \textbf{UV} are approached by Newton's Method,  stopping when the relative error in $L^2$-norm is less than $tol=10^{-6}$.

\subsection{Positivity}\label{sim}
In this subsection, the schemes $\textbf{US}$ and $\textbf{LC}$ are compared in terms of positivity. 
For the fully discretization of  both schemes the positivity of the variable $u_h$ is not clear. In fact, for the  time-discrete scheme $\textbf{US}$ the existence of nonnegative solution $(u_n,v_n)$ was proved (see Theorem \ref{USus} and Remark \ref{pvr}), but for the time-discrete scheme $\textbf{LC}$, although the positivity of $v_n$ can be proved, the positivity of $u_n$ is not clear. 
For this reason, in Figure~\ref{fig:MU70},  the positivity of the variables $u_h$ and $v_h$ is compared in both schemes taking meshes  increasingly thinner ($h=\frac{1}{35}$, $h=\frac{1}{75}$ and $h=\frac{1}{150}$). In all cases, we choose $k=10^{-5}$ and the initial conditions  (see Figure \ref{fig:initcond}):
$$u_0=-10\,xy\,(2-x)(2-y)\exp(-10(y-1)^2-10(x-1)^2)+10.0001,$$ 
$$v_0=200\,xy\,(2-x)(2-y)\exp(-30(y-1)^2-30(x-1)^2)+0.0001.$$
\begin{figure}[htbp]
\centering 
\subfigure[Initial cell density $u_0$]{\includegraphics[width=77mm]{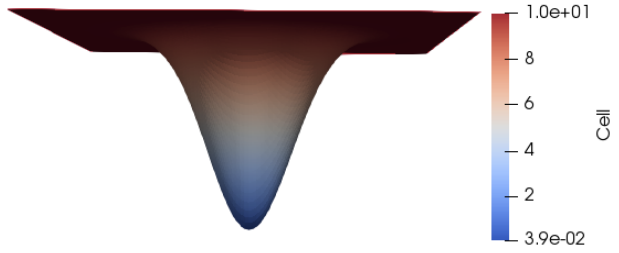}} \hspace{1 cm}
\subfigure[Initial chemical concentration $v_0$]{\includegraphics[width=70mm]{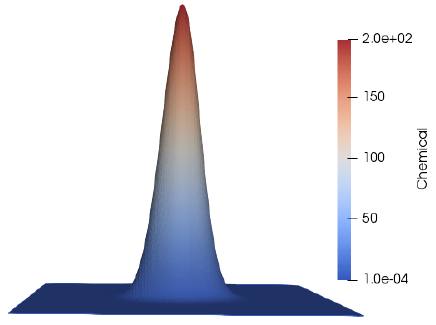}} 
\caption{Initial conditions.} \label{fig:initcond}
\end{figure}
In the case of the scheme $\textbf{US}$, it can be observed that $u_h$ is ne\-ga\-tive for some $\x \in \Omega$ in some times $t_n>0$, but when $h\rightarrow 0$ these values are closer to $0$; while in the case of  the scheme $\textbf{LC}$, when $h\rightarrow 0$ very negative cell densities $u_h$ are obtained for some $\x \in \Omega$ in some times $t_n>0$ (see Figure~\ref{fig:MU70}(a)-(c)). On the other hand, the same behavior is observed for the minimum of $v_h$ in both schemes. In fact, independently of $h$, if $v_0$ is positive, then $v_h$  is also positive (we show this behavior in Figure~\ref{fig:MU70}(d) for the case $h=\frac{1}{35}$, but the same holds for the cases $h=\frac{1}{75},\frac{1}{150}$).

\subsection{Unconditional Stability}
In this subsection,  the stability with respect to the energies   $\mathcal{E}(u,v)$ and  $\mathcal{E}(u,{\boldsymbol \sigma})$,  given in (\ref{eneruva}) and (\ref{ener01}) respectively, are numerically compared.
 Following line by line the proof of Lemma \ref{estinc1},  the unconditional energy-stability with respect to   $\mathcal{E}(u,{\boldsymbol \sigma})$ for the fully discrete schemes co\-rres\-ponding to schemes \textbf{US} and \textbf{LC} can be deduced. 
 In fact, if $(u_n,{\boldsymbol \sigma}_n)$ is any solution of the fully discrete schemes associated to 
\textbf{US} or \textbf{LC}, the following relation holds
\begin{equation}\label{ns01a}
RE(u_n,{\boldsymbol\sigma}_n):=\delta_t \mathcal{E}(u_n,{\boldsymbol \sigma}_n)+ \Vert \nabla u_n\Vert_{0}^{2}
+
\displaystyle\frac{1}{2}\Vert {\boldsymbol
\sigma}_n\Vert_{1}^{2} \leq 0, \ \ \forall n .
\end{equation}
However, considering the ``exact'' energy $\mathcal{E}(u,v)$ given in (\ref{eneruva}), in the case of fully discrete schemes, it is not clear how to prove unconditional energy-stability of schemes \textbf{US}, \textbf{LC} and \textbf{UV} with respect to this energy.
Therefore, it is interesting to study the behaviour of the corresponding residual  
 \begin{equation*}
RE(u_n,v_n):=\delta_t \mathcal{E}(u_n,v_n)+ \Vert \nabla u_n\Vert_{0}^{2}
+
\displaystyle\frac{1}{2}\Vert \Delta_h v_n\Vert_{0}^{2}+\frac{1}{2}\Vert \nabla v_n\Vert_{0}^{2}.
\end{equation*}
\begin{figure}[htbp]
	\centering 
	\subfigure[Minimum values of $u_h$, with $h=\frac{1}{35}$]{\includegraphics[width=77mm]{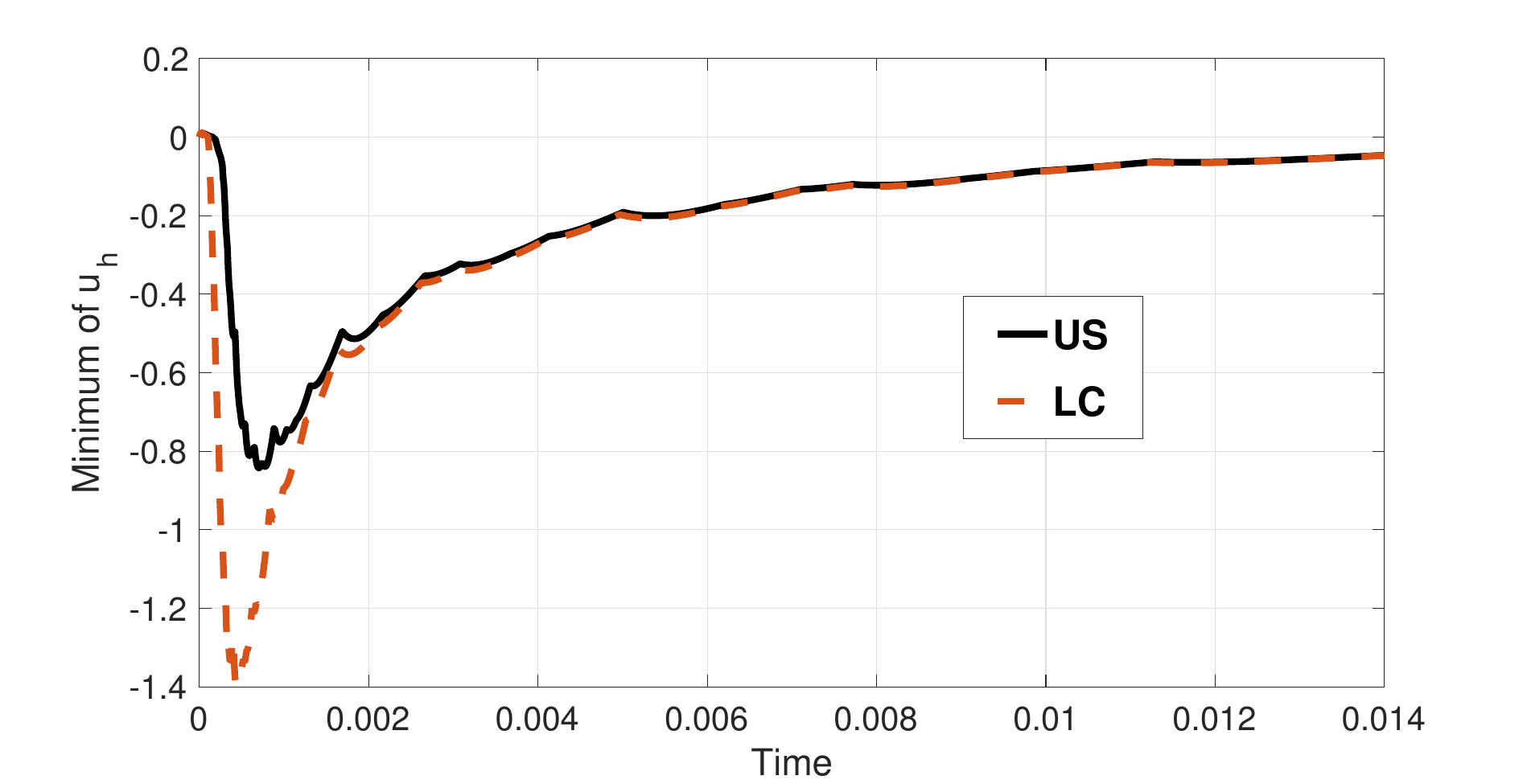}} \hspace{0.2 cm} 
	\subfigure[Minimum values of $u_h$, with $h=\frac{1}{75}$]{\includegraphics[width=77mm]{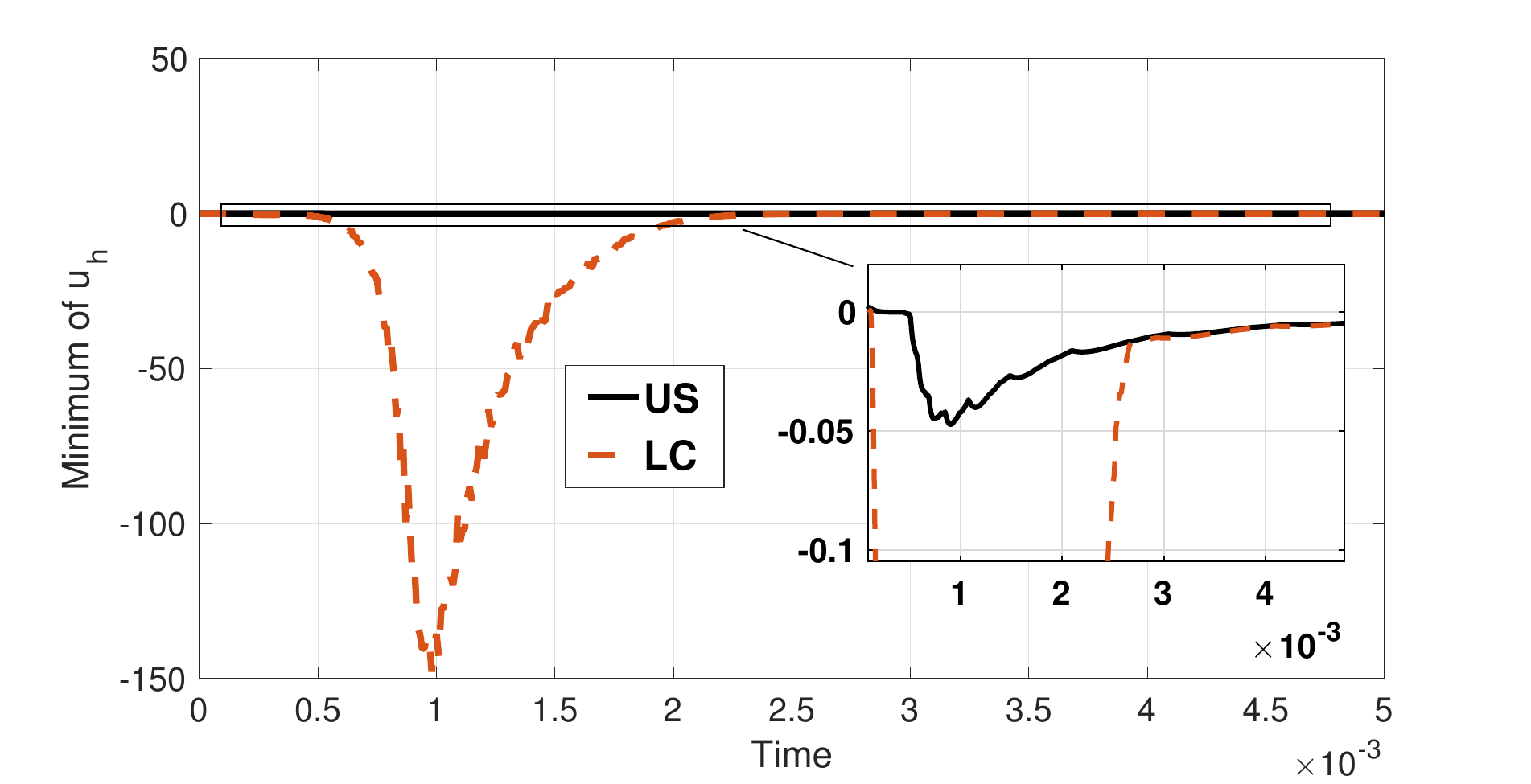}}
	\subfigure[Minimum values of $u_h$, with $h=\frac{1}{150}$]{\includegraphics[width=77mm]{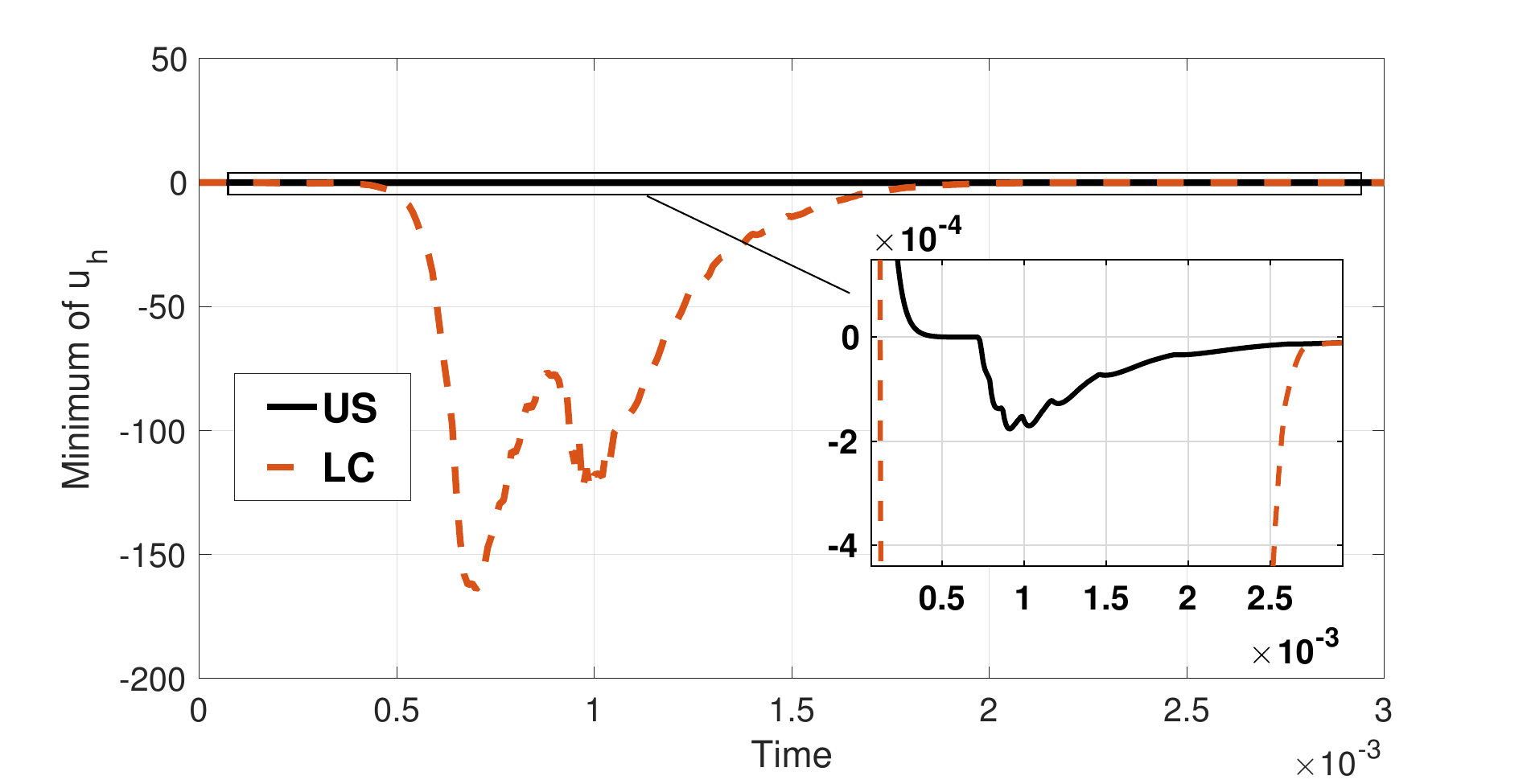}}\hspace{0.2 cm} 
	\subfigure[Minimum values of $v_h$, with $h=\frac{1}{35}$]{\includegraphics[width=77mm]{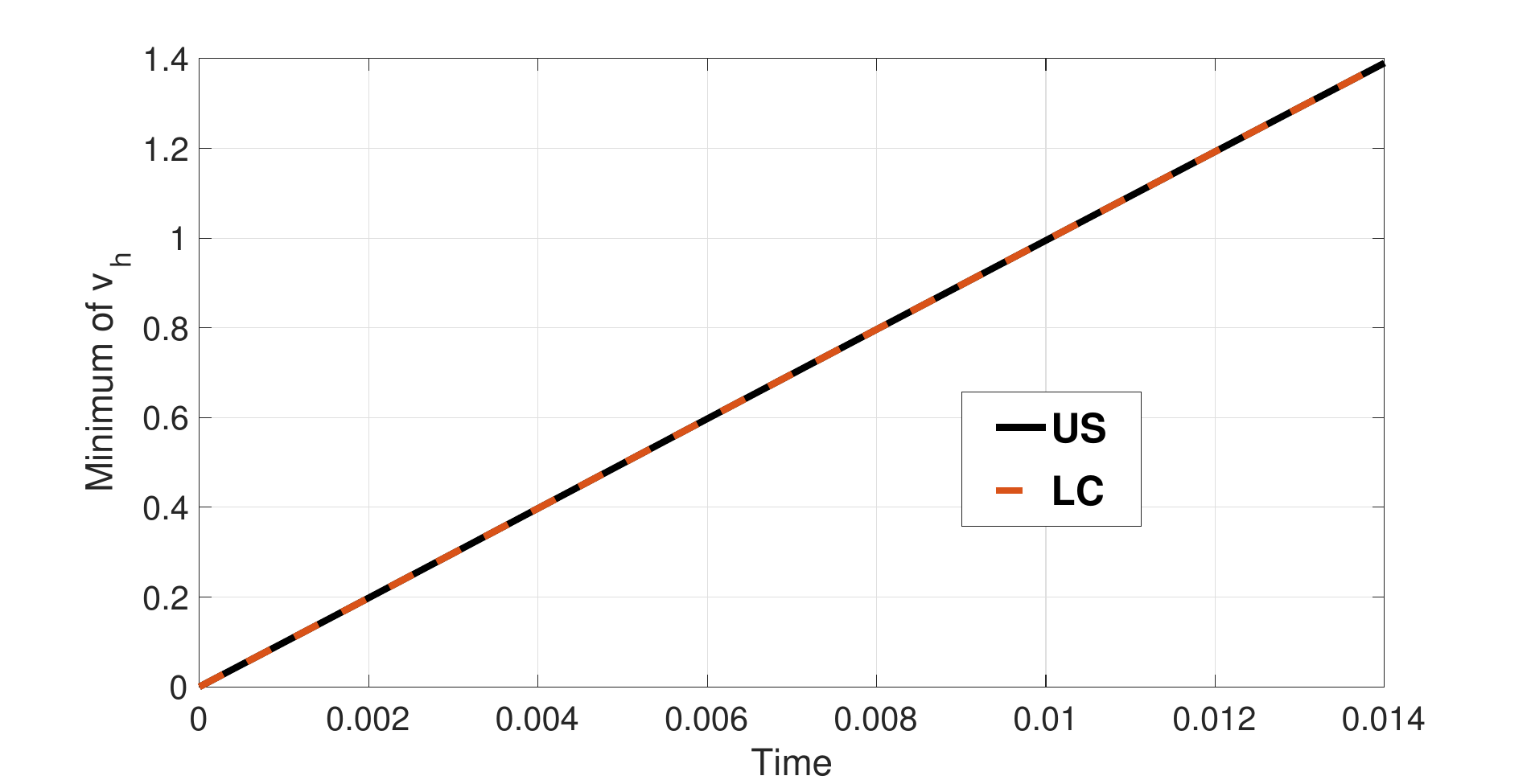}}
	\caption{Minimum of $u_h$ and $v_h$ computed by using the schemes \textbf{US} and \textbf{LC} with different values of $h$.} \label{fig:MU70}
\end{figure}
With this aim, we take $k=10^{-5}$ (in order to minimize the influence of the numerical dissipation terms $\frac{k}{2} 
\Vert \delta_t u_n\Vert_{0}^2$ and $\frac{k}{4} \Vert \delta_t {\boldsymbol \sigma}_n\Vert_{0}^2$), $h=\frac{1}{25}$ and the initial conditions 
$$u_0=-10xy\,(2-x)(2-y)\exp(-10(y-1)^2-10(x-1)^2)+10.0001$$ 
$$v_0=20xy\,(2-x)(2-y)\exp(-30(y-1)^2-30(x-1)^2)+0.0001.$$
We obtain that: 
\begin{enumerate}
\item[(a)] The schemes \textbf{US} and \textbf{LC} satisfy the energy decreasing in time property (\ref{stabf02}) for the modified energy $\mathcal{E}(u,{\boldsymbol \sigma})$, see Figure~\ref{fig:ENUS}(a).
\item[(b)] The schemes \textbf{US} and \textbf{LC} satisfy (\ref{ns01a}), see Figure~\ref{fig:ENUS}(b).
\item[(c)] The schemes \textbf{US}, \textbf{LC} and \textbf{UV} satisfy the energy decreasing in time property 
 for the exact energy $\mathcal{E}(u,v)$, that is, $\mathcal{E}(u_n,v_n)\le \mathcal{E}(u_{n-1},v_{n-1})$ for all $n$, see Figure~\ref{fig:ENUS}(c).
 \item[(d)] The schemes \textbf{US}, \textbf{LC} and \textbf{UV} have $RE(u_n,v_n)>0$ for some $t_n\geq 0$. However, it is observed that the  residual $RE(u_n,v_n)$ in the  schemes \textbf{US} and \textbf{LC} is much smaller than the  residual of the scheme  \textbf{UV}, see Figure~\ref{fig:ENUS}(d).
\end{enumerate}
\begin{figure}[htbp]
	\centering 
	\subfigure[$\mathcal{E}(u_n,{\boldsymbol\sigma}_n)$ in the schemes \textbf{US} and \textbf{LC}]{\includegraphics[width=77mm]{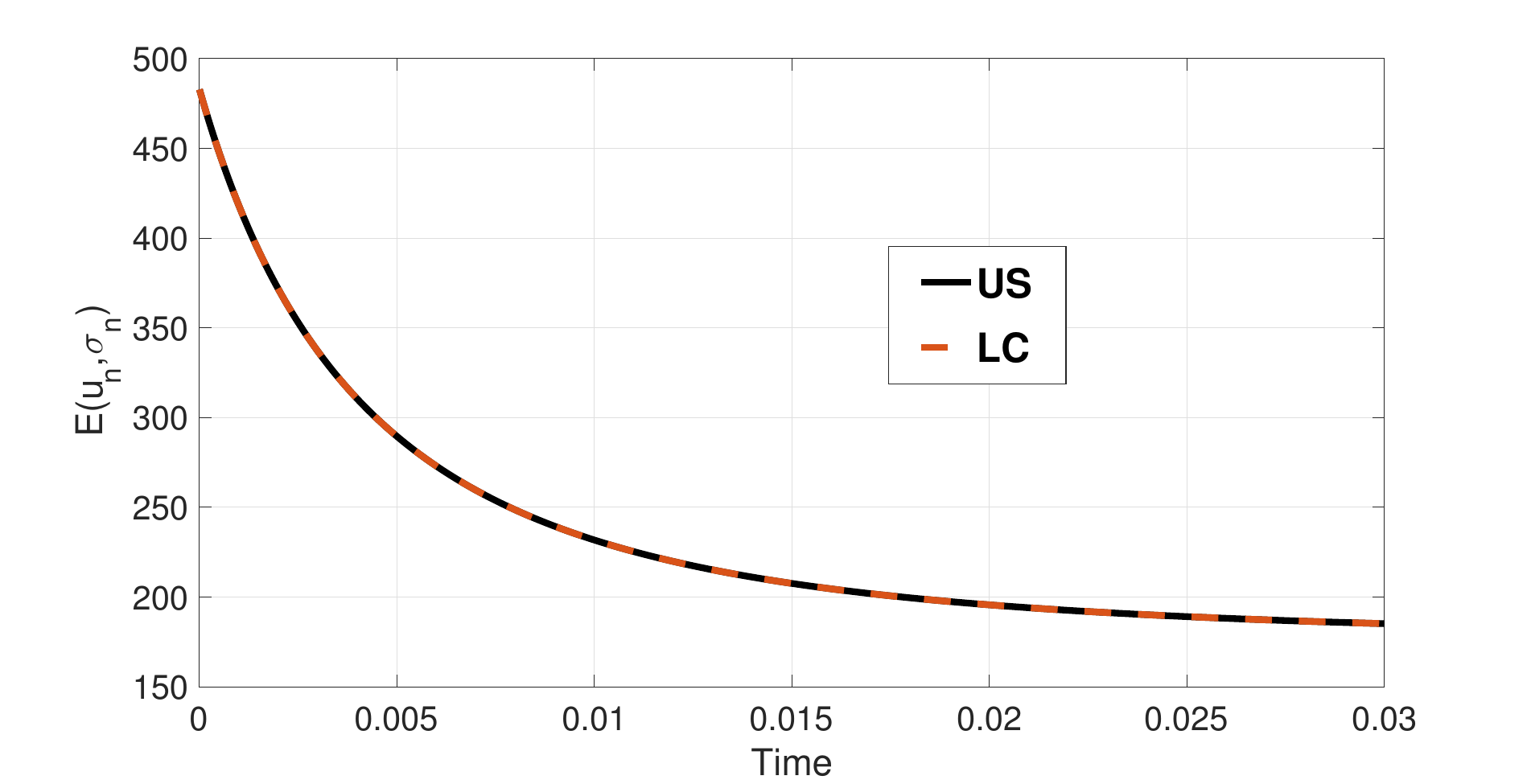}} \hspace{0.2 cm} 
	\subfigure[$RE(u_n,{\boldsymbol\sigma}_n)$ in the schemes \textbf{US} and \textbf{LC}]{\includegraphics[width=77mm]{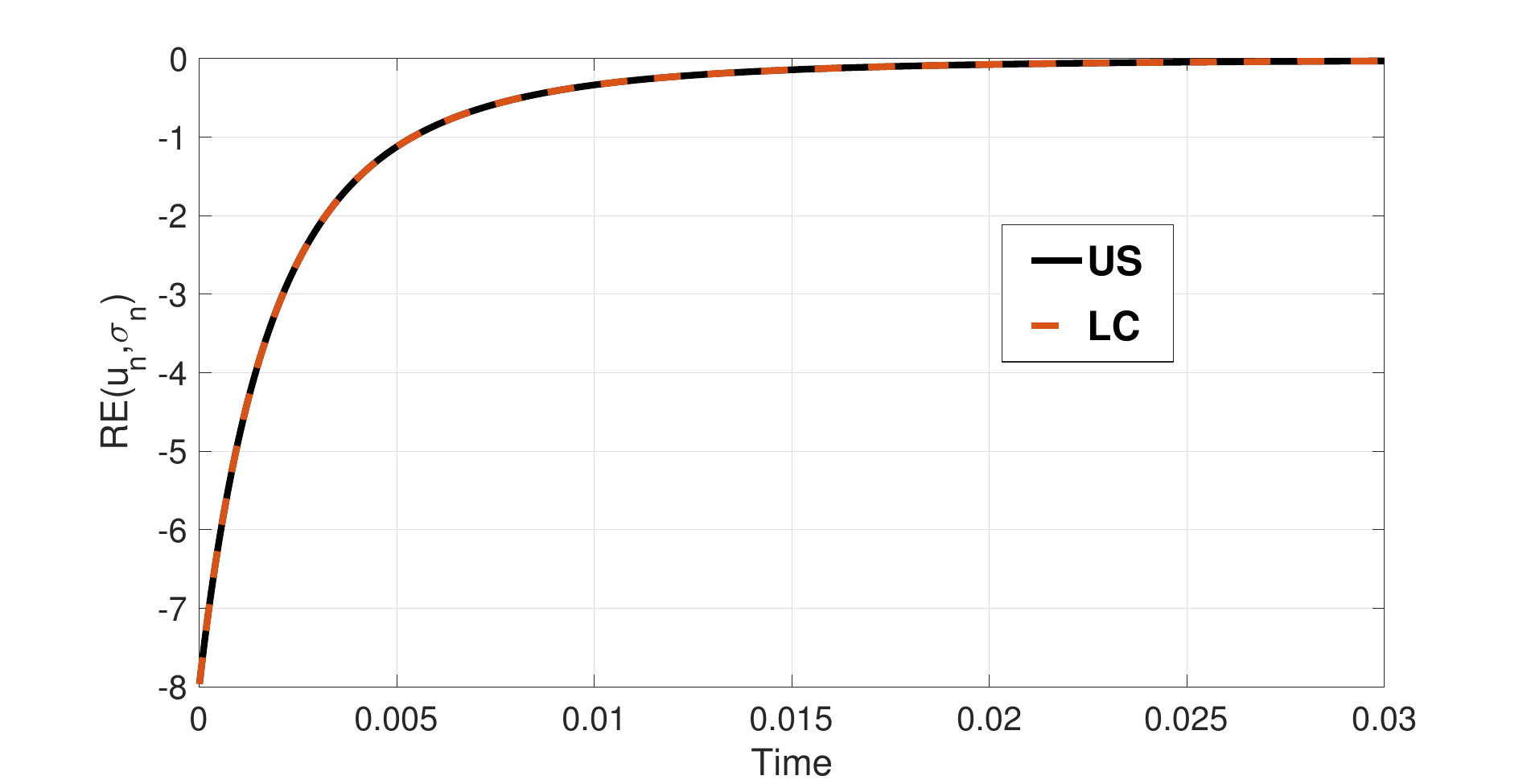}}
	\subfigure[$\mathcal{E}(u_n,v_n)$ in the schemes \textbf{US}, \textbf{LC} and \textbf{UV}]{\includegraphics[width=77mm]{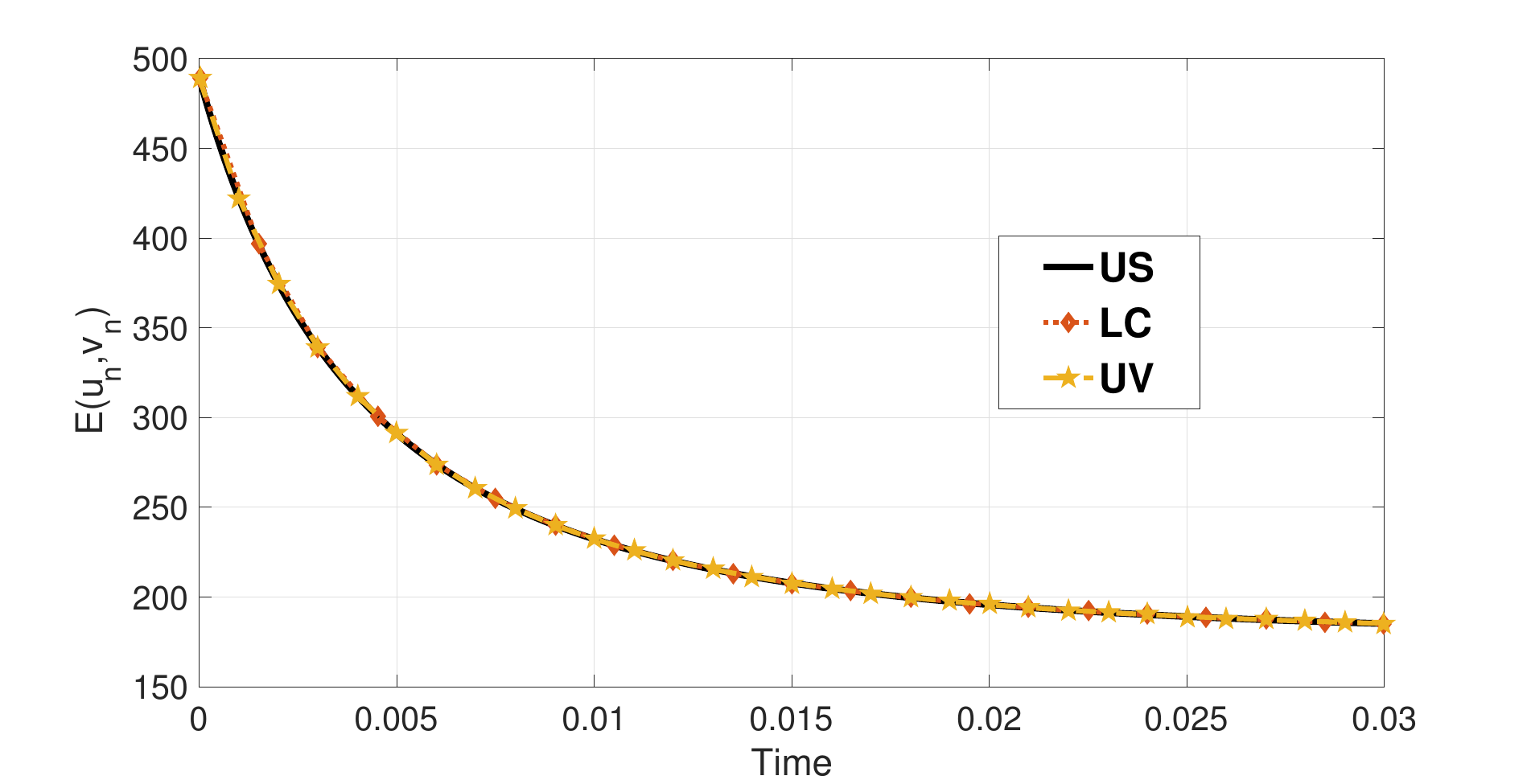}}\hspace{0.2 cm} 
	\subfigure[$RE(u_n,v_n)$ in the schemes \textbf{US}, \textbf{LC} and \textbf{UV}.]{\includegraphics[width=77mm]{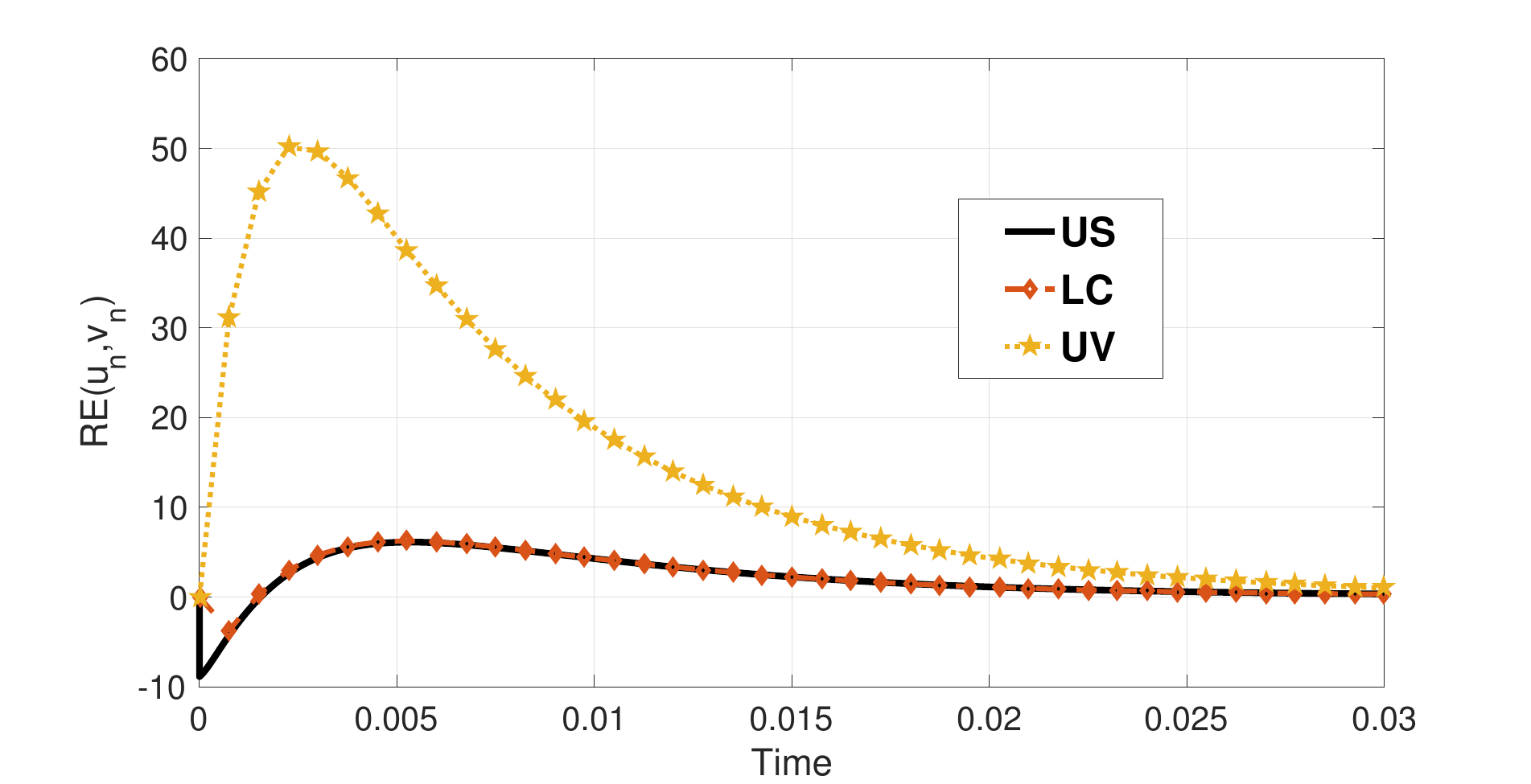}}
	\caption{Energy-stability of the schemes \textbf{US}, \textbf{LC} and \textbf{UV} for both energies $\mathcal{E}(u_n,{\boldsymbol\sigma}_n)$ and  $\mathcal{E}(u_n,v_n)$.} \label{fig:ENUS}
\end{figure}

\

\textbf{Appendix A}\\
In order to prove the solvability of (\ref{modelf02mod}), we will use the Leray-Schauder fixed point theorem. With this aim, we define the operator $R:L^4(\Omega)\times \L^4(\Omega)\rightarrow L^4(\Omega)\times \L^4(\Omega)$ by 
$R(\widetilde{u},\widetilde{\boldsymbol
\sigma})=(u,{\boldsymbol \sigma})$, such that $(u,{\boldsymbol \sigma})$ solves the following linear decoupled problems
\begin{equation}  \label{modelfexis01}
\begin{array}
[c]{lll}%
\displaystyle\frac{1}{k}(u,\overline{u}) + (\nabla u, \nabla\overline{u}) =\displaystyle\frac{1}{k}(u_{n-1},\overline{u}) -(\widetilde{u}_+\widetilde{\boldsymbol \sigma},\nabla \overline{u}), \ \ \forall \overline{u}\in {H}^{1}(\Omega),\vspace{0,2 cm}\\
\displaystyle\frac{1}{k}({\boldsymbol \sigma},\overline{\boldsymbol \sigma}) + \langle B
{\boldsymbol \sigma}, \overline{\boldsymbol \sigma}\rangle =\displaystyle\frac{1}{k}({\boldsymbol \sigma}_{n-1},\overline{\boldsymbol \sigma})
-(\widetilde{u}^2, \nabla\cdot \overline{\boldsymbol \sigma}), \ \ \forall \overline{\boldsymbol \sigma}\in  \H^{1}_{\sigma}(\Omega).
\end{array}
\end{equation}
\begin{enumerate}
\item{$R$ is well defined}. Let $(\widetilde{u},\widetilde{\boldsymbol \sigma})\in L^4(\Omega)\times \mathbf{L}^4(\Omega)$ and consider the following bilinear forms $\widetilde{a}:{H}^{1}(\Omega)\times
{H}^{1}(\Omega)\rightarrow \mathbb{R}$, $\widetilde{b}:\H^{1}_{\sigma}(\Omega)\times
\H^{1}_{\sigma}(\Omega)\rightarrow \mathbb{R}$, and the linear forms $l_1:{H}^{1}(\Omega)\rightarrow
\mathbb{R}$ and $l_2:\H^{1}_{\sigma}(\Omega)\rightarrow \mathbb{R}$ given by
$$\widetilde{a}(u,\overline{u})=\displaystyle\frac{1}{k} (u,\overline{u})+(\nabla u, \nabla\overline{u}), \quad \widetilde{b}({\boldsymbol\sigma},\overline{{\boldsymbol\sigma}})=\displaystyle\frac{1}{k} ({\boldsymbol\sigma},\overline{{\boldsymbol\sigma}})+\langle B {\boldsymbol\sigma},  \overline{\boldsymbol\sigma}\rangle,$$
$$
l_1(\overline{u})= \frac{1}{k}\int_\Omega
u_{n-1} \overline{u}- \int_\Omega \widetilde{u}_+\widetilde{\boldsymbol
\sigma}\cdot\nabla \bar{u} , \quad
l_2(\overline{{\boldsymbol\sigma}})= \frac{1}{k}\int_\Omega
{\boldsymbol\sigma}_{n-1} \overline{{\boldsymbol\sigma}} - \int_\Omega
\widetilde{u}^2\nabla \cdot \overline{{\boldsymbol\sigma}},
$$
for all $u,\overline{u}\in {H}^{1}(\Omega)$ and ${\boldsymbol\sigma},\overline{{\boldsymbol\sigma}}\in
\H^{1}_{\sigma}(\Omega)$. Then, using the H\"older inequality and Sobolev embeddings, we can verify that $\widetilde{a}$ and $\widetilde{b}$ are continuous and coercive on ${H}^{1}(\Omega)$ and $\H^{1}_{\sigma}(\Omega)$ respectively, and  $l_1\in  {H}^{1}(\Omega)'$ and $l_2\in \H^{1}_{\sigma}(\Omega)'$. Thus, from Lax-Milgram theorem, there exists a unique 
$(u,{\boldsymbol \sigma})\in H^1(\Omega)\times \H^1_{\sigma}(\Omega)\hookrightarrow L^4(\Omega)\times \L^4(\Omega)$
solution of (\ref{modelfexis01}).
\item{Now, let us prove that all possible fixed points of $\lambda R$ (with $\lambda \in (0,1]$) are bounded.}
In fact, observe that if $(u,{\boldsymbol \sigma})$ is a fixed point of $\lambda R$, then $(u,{\boldsymbol \sigma})$ satisfies the coupled problem
\begin{equation}\label{eq010}
\left\{
\begin{array}
[c]{lll}%
\displaystyle  \widetilde{a}( u, \overline{u}) =\displaystyle\frac{\lambda}{k}(u_{n-1},\overline{u}) 
-\lambda(u_+{\boldsymbol \sigma},\nabla \overline{u}),\ \ \forall \overline{u}\in {H}^{1}(\Omega), \vspace{0,2 cm}\\
\displaystyle \widetilde{b}({\boldsymbol \sigma},\overline{\boldsymbol \sigma}) =
\displaystyle\frac{\lambda}{k}({\boldsymbol \sigma}_{n-1},\overline{\boldsymbol \sigma})
-\lambda (u^2, \nabla\cdot \overline{\boldsymbol \sigma}), \ \ \forall \overline{\boldsymbol \sigma}\in  \H^{1}_{\sigma}(\Omega),
\end{array}
\right. 
\end{equation}
(because $\lambda R(u,{\boldsymbol \sigma}) = (u,{\boldsymbol \sigma})$ implies $R(u,{\boldsymbol \sigma}) = (\frac{1}{\lambda} u, \frac{1}{\lambda}{\boldsymbol \sigma})$). Proceeding as in Part A of the proof of Theorem \ref{USus},  it can be proved that if $(u,{\boldsymbol\sigma})$ is a solution of (\ref{eq010}), then $u\geq 0$, which implies that $u=u_+$. Then,  testing by $\overline{u}= u$ and $\overline{\boldsymbol\sigma}= \frac{1}{2}{\boldsymbol \sigma}$ in  (\ref{eq010})$_1$ and (\ref{eq010})$_2$, and taking into account that $\lambda \in (0,1]$,  one obtains
\begin{equation*}\label{acptofij01}
\displaystyle
\frac{1}{4}\Vert (u,{\boldsymbol\sigma})\Vert_{0}^2 +\displaystyle\frac{k}{2}\Vert (\nabla u, {\boldsymbol
\sigma})\Vert_{L^2\times H^1}^{2} \leq \displaystyle
C\lambda^2\Vert (u_{n-1},{\boldsymbol
\sigma}_{n-1})\Vert_{0}^{2} \le C(u_{n-1}, {\boldsymbol\sigma}_{n-1}).
\end{equation*}
Thus, we deduce that $\Vert (u,{\boldsymbol \sigma})\Vert_{L^4}\leq C\Vert (u,{\boldsymbol \sigma})\Vert_{1} \leq C(u_{n-1}, {\boldsymbol\sigma}_{n-1})$.
\item{We  prove that $R$ is continuous.} Let $\{(\widetilde{u}^l,\widetilde{\boldsymbol \sigma}^l)\}_{l\in\mathbb{N}}\subset {L}^4(\Omega)\times \L^4(\Omega)$ be a sequence such that 
\begin{equation}\label{c001}
(\widetilde{u}^l,\widetilde{\boldsymbol \sigma}^l)\rightarrow (\widetilde{u},\widetilde{\boldsymbol \sigma}) \ \mbox{ in }  {L}^4(\Omega)\times \mathbf{L}^4(\Omega),
\quad \hbox{as $l\to +\infty$}.
\end{equation}
Therefore, $\{(\widetilde{u}^l,\widetilde{\boldsymbol \sigma}^l)\}_{l\in\mathbb{N}}$ is bounded in ${L}^4(\Omega)\times \mathbf{L}^4(\Omega)$, and from item 1 we deduce that $\{(u^l,\boldsymbol \sigma^l)=R(\widetilde{u}^l,\widetilde{\boldsymbol \sigma}^l)\}_{l\in\mathbb{N}}$ is bounded in ${H}^1(\Omega)\times \H^1(\Omega)$. Then, there exists a subsequence $\{R(\widetilde{u}^{l^r},\widetilde{\boldsymbol \sigma}^{l^r})\}_{r\in\mathbb{N}}$  such that 
\begin{equation}\label{c002}
R(\widetilde{u}^{l^r},\widetilde{\boldsymbol \sigma}^{l^r})\rightarrow (u',{\boldsymbol\sigma}') \ \ \mbox{ weakly in } H^1(\Omega)\times \H^1(\Omega) \ \mbox{ and strongly  in } L^4(\Omega)\times \L^4(\Omega).
\end{equation}
 Thus, from (\ref{c001})-(\ref{c002}), a standard pass to the limit as $r\to +\infty$ in (\ref{modelfexis01}), allows to  deduce that $R(\widetilde{u},\widetilde{\boldsymbol \sigma})=({u}',{\boldsymbol \sigma}')$. Therefore,  any convergent subsequence of  $\{R(\widetilde{u}^l,\widetilde{\boldsymbol \sigma}^l)\}_{l\in\mathbb{N}}$ converges to $R(\widetilde{u},\widetilde{\boldsymbol \sigma})$ strongly in $L^4(\Omega)\times \L^4(\Omega)$.
 From uniqueness of $R(\widetilde{u},\widetilde{\boldsymbol \sigma})$,  one concludes that $R(\widetilde{u}^l,\widetilde{\boldsymbol \sigma}^l)\rightarrow R(\widetilde{u},\widetilde{\boldsymbol \sigma})$ in $L^4(\Omega)\times \L^4(\Omega)$. Thus, $R$ is continuous.
\item{$R$ is compact.}\label{3} In fact, if $\{(\widetilde{u}^l,\widetilde{\boldsymbol \sigma}^l)\}_{l\in\mathbb{N}}$ is a bounded sequence in ${L}^4(\Omega)\times \L^4(\Omega)$ and we denote $(u^l,\boldsymbol \sigma^l)=R(\widetilde{u}^l,\widetilde{\boldsymbol \sigma}^l)$, then we can deduce
\begin{equation*}
\label{fps02}
\displaystyle
\frac{1}{2k}\Vert (u^l, {\boldsymbol\sigma^l})\Vert_{0}^{2}+\frac{1}{2}\Vert (\nabla u^l, {\boldsymbol \sigma}^l)\Vert_{L^2\times H^1}^{2}\leq \displaystyle
\frac{1}{2k}\Vert (u_{n-1}, {\boldsymbol
\sigma}_{n-1})\Vert_{0}^{2} + \frac{1}{2}\Vert \widetilde{u}^l\Vert_{L^4}^2 \Vert \widetilde{\boldsymbol \sigma}^l \Vert_{L^4}^2 + \frac{1}{2}\Vert \widetilde{u}^l\Vert_{L^4}^4 \leq C,
\end{equation*}
where $C$ is independent of $l\in\mathbb{N}$. Therefore, we conclude that $\{R(\widetilde{u}^l,\widetilde{\boldsymbol \sigma}^l)\}_{l\in\mathbb{N}}$ is bounded in ${H}^1(\Omega)\times \H^1(\Omega)$ which is compactly embedded in ${L}^4(\Omega)\times \L^4(\Omega)$, and thus $R$ is compact.
\end{enumerate}
 The hypotheses of the Leray-Schauder fixed point theorem are then satisfied, and the existence of a fixed point for the map $R$ is proved. 
This fixed point
$R(u,{\boldsymbol \sigma})=(u,{\boldsymbol \sigma})$ is a solution of 
(\ref{modelf02mod}). 

\section*{Acknowledgements}
The authors have been partially supported by MINECO grant MTM2015-69875-P
(Ministerio de Econom\'{\i}a y Competitividad, Spain) with the participation of FEDER.
The third author have also been supported by Vicerrector\'ia de Investigaci\'on y Extensi\'on of Universidad
Industrial de Santander.

\end{document}